\newtheorem{defi}{Definition}[section] 
\newtheorem{theo}[defi]{Theorem}
\newtheorem{theorem}[defi]{Theorem}
\newtheorem{question}[defi]{Question}
\newtheorem{coro}[defi]{Corollary} 
\newtheorem{lemme}[defi]{Lemma}
\newtheorem{lemma}[defi]{Lemma}
\newtheorem{prop}[defi]{Proposition}
\newtheorem{conj}[defi]{Conjecture}
\theoremstyle{remark}
\newtheorem{example}[defi]{Example}
\newtheorem{rmk}[defi]{Remark}
\newcommand{\proj}{\mathbf{P}}
\newcommand{\R}{\mathbb{R}}
\newcommand{\C}{\mathbb{C}} 
\newcommand{\N}{\mathbb{N}}
\newcommand{\p}{\varphi}
\newcommand{\Cone}{\mathrm{Cone}}
\newcommand{\diag}{\mathrm{diag}}
\newcommand{\SL}{\mathrm{SL}}
\newcommand{\Log}{\mathrm{Log}}
\newcommand{\SO}{\mathrm{SO}}
\newcommand{\op}{\iota}
\newcommand{\norm}[1]{\Vert #1 \Vert}
\renewcommand{\Log}{\log}
\newcommand{\scal}[2]{\langle #1\, |\, #2 \rangle }
\newcommand{\tv}{\rightarrow}
\newcommand{\G}{\Gamma}
\newcommand{\g}{\gamma}
\DeclareMathOperator{\Card}{Card}
\DeclareMathOperator{\DimH}{DimH}
\DeclareMathOperator{\Sym}{Sym}
\newcommand{\fa}{\mathfrak{a}}
\newcommand{\fg}{\mathfrak{g}}
\newcommand{\fp}{\mathfrak{p}}
\newcommand{\fk}{\mathfrak{k}}
\newcommand{\cC}{\mathcal{C}}
\DeclareMathOperator{\Span}{Span}
\DeclareMathOperator{\ad}{ad}
\newcommand{\Hyp}{\mathbb{H}}
\renewcommand{\epsilon}{\varepsilon}
\renewcommand{\phi}{\varphi}
\title[Hausdorff dimension of limit sets for  Anosov representations]{Hausdorff dimension of limit sets for projective Anosov representations} 
\author{Olivier Glorieux}
\address{Olivier Glorieux: Lyc\'ee Chaptal, 45 Bd des Batignolles,  75008 Paris}
\author{Daniel Monclair}
\address{Daniel Monclair: Laboratoire de Math\'ematiques d'Orsay, Universit\'e Paris-Saclay, F-91405 Orsay Cedex, France}
\author{Nicolas Tholozan}
\address{Nicolas Tholozan: CNRS, \'ENS-PSL, 45 rue d'Ulm, 75005 Paris, France}
\begin{document}

\begin{abstract}
We study the relation between critical exponents and Hausdorff dimensions of limit sets for projective Anosov representations. We prove that the Hausdorff dimension of the symmetric limit set in $\proj(\R^{n}) \times \proj({\R^{n}}^*)$ is bounded between two critical exponents associated respectively to a highest weight and a simple root.
\end{abstract}

\maketitle

\tableofcontents

\section{Introduction}

\subsection{Critical exponents and Hausdorff dimension}

Let $\G$ be a discrete group of isometries of a metric space $(X,d)$. A well-known metric invariant of $\G$ is its \emph{critical exponent}, which measures the exponential growth rate of its orbits. It can be defined by
\[\delta_\Gamma = \limsup_{R\to +\infty} \frac{1}{R}\log\left(\Card\{g\in \G \mid d(x,g \cdot x) \leq R\}\right)~,\]
where $x$ is any base point in $X$.

When $(X,d)$ is the hyperbolic space $\Hyp^n$ and $\G$ is \emph{convex-cocompact} (i.e. acts cocompactly on a non-empty convex subset of $\Hyp^n$), Sullivan  \cite{sullivan1979density}  proved that the critical exponent of $\G$ equals the Hausdorff dimension of the \emph{limit set} of $\G$ inside $\partial_\infty \Hyp^n$. The proof relies on the Ahlfors regularity of the \emph{Patterson--Sullivan measure} on the limit set. 

This famous theorem has known a number of generalizations. See for instance \cite{DOP,roblin2003ergodicite,Coornaert} for generalizations to other discrete groups acting on hyperbolic spaces. This paper is mainly interested in extensions to other non-positively curved geometries. A fairly general version of Sullivan's theorem was given by Coornaert for a discrete group $\G$ acting convex-cocompactly on a Gromov hyperbolic space $X$ \cite[Corollaire 7.6]{Coornaert}. In this setting, the critical exponent equals the Hausdorff dimension of the limit set of $\G$ in $\partial_\infty X$ measured with respect to Gromov's ``quasi-distance'' on the boundary (see Section \ref{subsec-gromov metric on the boundary}). When $X$ is a Riemannian manifold with variable negative curvature, however, this metric typically differs from the visual metric (obtained by identifying the boundary at infinity with the tangent sphere at a basepoint). For instance, the visual metric on the boundary of the complex hyperbolic space $\Hyp^n_\C$ is the round metric on the unit sphere in $\C^n$, while the Gromov metric coincides with the \emph{Carnot--Caratheodory metric}.

There have also been several important works generalizing Patterson--Sullivan theory to discrete subgroups of a semi-simple Lie group $G$ of higher rank acting on its symmetric space $X$ \cite{link2005measures,quint2002mesures}. A new feature of the higher rank is the existence of several critical exponents corresponding to several $G$-invariant ``metrics'' on $X$. Quint studied in \cite{quint2002mesures} the dependence of those critical exponents on such a choice and constructed analogs of Patterson--Sullivan measures on the space $G/P_{min}$, where $P_{min}$ is a minimal parabolic subgroup (see Section \ref{subsec - Critical exponents symmetric space}).

The recently developed theory of Anosov subgroups of higher rank Lie groups motivates a further investigation of these generalizations. Anosov subgroups are in many aspects the ``right'' generalization of convex cocompact groups in rank $1$. In particular, they are Gromov hyperbolic, and their Gromov boundary is realized geometrically as a limit set in some flag variety $G/P$. It is natural to ask how the Hausdorff dimension of this limit set relates to the different critical exponents of the group.

An important feature of this new context is that the action of $G$ on $G/P$ is far from conformal as soon as the rank of $G$ is at least $2$. This makes Hausdorff dimensions of attractors hard to track (see \cite{PSW19} for further discussion).

\subsection{Main results}

The present work focuses on \emph{projective Anosov subgroups} of $\SL(n,\R)$. We explain in Section \ref{subsubsec-Anosov representation} that general Anosov subgroups of a semi-simple Lie group $G$ can be seen as projective Anosov groups after composing with a suitably chosen linear representation (see also \cite[Section 3.3]{gueritaud20017anosov}).

Let $\G$ be a projective Anosov subgroup of $\SL(n,\R)$. Then $\G$ is Gromov hyperbolic and comes with two injective equivariant maps $\xi : \partial_\infty \G \to \proj (\R^n)$ and $\xi^* : \partial_\infty \G \to \proj ({\R^n}^*)$. We denote by $\xi^{sym}$ the map $(\xi,\xi^*): \partial_\infty \G \to \proj (\R^n) \times \proj({\R^n}^*)$. If, moreover, $\G$ preserves a proper convex subset of $\proj(\R^n)$, then $\G$ is \emph{strongly projectively convex-cocompact} in the sense of \cite{DGK}. 

Given $g\in \SL(n,\R)$, define $\mu_i(g)$ as the logarithm of the $i$\textsuperscript{th} eigenvalue of $\sqrt{g g^T}$ (in decreasing order). We define the \emph{simple root critical exponent} of $\G$ by
\[\delta_{1,2}(\G) = \limsup_{R\to +\infty} \frac{1}{R} \log \left(\Card \{\gamma \in \G \mid \mu_1(\gamma) - \mu_2(\gamma) \leq R\}\right)\]
and the \emph{highest weight critical exponent} of $\G$ by
\[\delta_{1,n}(\G) = \limsup_{R\to +\infty} \frac{1}{R} \log \left(\Card \{\gamma \in \G \mid \mu_1(\gamma) - \mu_n(\gamma) \leq R\}\right)~.
\]

These critical exponents are relevant for different reasons: the projective Anosov property means that $\mu_1(\gamma) - \mu_2(\gamma)$ grows linearly with the word length of $\gamma$, so $\delta_{1,2}(\G)$ can be seen as a ``measure'' of the Anosov property. The critical exponent $\delta_{1,n}(\G)$ is the critical exponent associated to the Hilbert metric on $\SL(n,\R)/\SO(n)$ seen as the projectivization of the cone of positive definite quadratic forms on $\R^n$. Our main result compares these two critical exponents with the Hausdorff dimension of $\xi^{sym}(\partial_\infty \G)$ with respect to a Riemannian metric on $\proj (\R^n) \times \proj({\R^n}^*)$.

Our first comparison result between Hausdorff dimensions concerns strongly projectively convex-cocompact subgroups of $\SL(n,\R)$, introduced by Crampon and Marquis \cite{crampon2014finitude}. It is shown in \cite{DGK} that these groups are projective Anosov.

\begin{theo} \label{theo-MainTheorem}
Let $\Gamma$ be a strongly projectively convex-cocompact subgroup of $\SL(n,\R)$. Then
\[2 \delta_{1,n}(\Gamma) \leq \DimH(\xi^{sym}(\partial_\infty \G)) \leq \delta_{1,2}(\Gamma)~.\]
\end{theo}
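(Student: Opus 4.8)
The plan is to prove both inequalities by Patterson--Sullivan techniques, exploiting that the contraction of $\gamma$ near its attracting fixed point $\xi^{sym}(\gamma^+)=(\xi(\gamma^+),\xi^*(\gamma^+))$ is governed by singular value gaps. Fixing round metrics $d_1$ on $\proj(\R^n)$ and $d_2$ on $\proj({\R^n}^*)$ and the product metric $D=\max(d_1,d_2)$, I would first record the standard contraction estimates for projective Anosov subgroups: in a neighbourhood of $\xi(\gamma^+)$ the map $\gamma$ contracts $d_1$ at rate $e^{-(\mu_1(\gamma)-\mu_2(\gamma))}$, while on $\proj({\R^n}^*)$, where $\gamma$ acts by $(\gamma^{-1})^T$, it contracts $d_2$ at rate $e^{-(\mu_{n-1}(\gamma)-\mu_n(\gamma))}$. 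The identity $\mu_{n-1}(\gamma)-\mu_n(\gamma)=\mu_1(\gamma^{-1})-\mu_2(\gamma^{-1})$ shows that the functional $\mu_{n-1}-\mu_n$ has the same critical exponent $\delta_{1,2}(\G)$ as $\mu_1-\mu_2$, a fact both bounds rely on.

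For the upper bound I would cover $\xi^{sym}(\partial_\infty\G)$ by shadows $\mathcal{O}_\gamma$, each being the trace of the limit set near $\xi^{sym}(\gamma^+)$, equivalently the image under $\gamma$ of the limit set away from the repelling locus. By the contraction estimates the $D$-diameter of $\mathcal{O}_\gamma$ is comparable to $\max\bigl(e^{-(\mu_1(\gamma)-\mu_2(\gamma))},e^{-(\mu_{n-1}(\gamma)-\mu_n(\gamma))}\bigr)$. Using strong convex cocompactness to extract, for each $\e>0$, a cover of bounded multiplicity by shadows of diameter at most $\e$, and the elementary bound $\max(a,b)^s\le a^s+b^s$, the $s$-dimensional Hausdorff measure is controlled by $\sum_\gamma\bigl(e^{-s(\mu_1(\gamma)-\mu_2(\gamma))}+e^{-s(\mu_{n-1}(\gamma)-\mu_n(\gamma))}\bigr)$, which converges for $s>\delta_{1,2}(\G)$ by the previous remark. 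Letting $\e\to0$ gives $\DimH(\xi^{sym}(\partial_\infty\G))\le\delta_{1,2}(\G)$.

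For the lower bound I would construct a Patterson--Sullivan measure $\nu$ on $\xi^{sym}(\partial_\infty\G)$ attached to the Hilbert functional $\mu_1-\mu_n$ and the exponent $\delta_{1,n}(\G)$, and establish the corresponding shadow lemma $\nu(\mathcal{O}_\gamma)\asymp e^{-\delta_{1,n}(\G)(\mu_1(\gamma)-\mu_n(\gamma))}$. The heart of the matter is then a Frostman estimate $\nu(B_D(z,r))\lesssim r^{2\delta_{1,n}(\G)}$, from which the mass distribution principle yields $2\delta_{1,n}(\G)\le\DimH(\xi^{sym}(\partial_\infty\G))$. The factor $2$ should come from the product structure together with the inequality
\[\mu_1(\gamma)-\mu_n(\gamma)\ \ge\ \bigl(\mu_1(\gamma)-\mu_2(\gamma)\bigr)+\bigl(\mu_{n-1}(\gamma)-\mu_n(\gamma)\bigr)~,\]
which holds because $\mu_2\ge\mu_{n-1}$: a $D$-ball of radius $r$ constrains \emph{both} projections to scale $r$, so the shadow covering $B_D(z,r)\cap\xi^{sym}(\partial_\infty\G)$ has both of its factor diameters $\lesssim r$, forcing both end gaps to be $\gtrsim\log(1/r)$, whence $\nu(\mathcal{O}_\gamma)\lesssim e^{-2\delta_{1,n}(\G)\log(1/r)}=r^{2\delta_{1,n}(\G)}$.

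The main obstacle is exactly this Frostman estimate. The shadows $\mathcal{O}_\gamma$ are in general eccentric boxes, their factor diameters $e^{-(\mu_1-\mu_2)}$ and $e^{-(\mu_{n-1}-\mu_n)}$ being of very different sizes, whereas a $D$-ball is round; the delicate point is to show that the trace of a round ball on the limit set is nonetheless captured by a single shadow (or a controlled number of them) for which the two end gaps are \emph{simultaneously} of order $\log(1/r)$. Making this rigorous is where strong convex cocompactness, the transversality of $\xi$ and $\xi^*$, and the regularity of the limit maps relating $d_1$ and $d_2$ along the limit set must all be used. Constructing the Hilbert Patterson--Sullivan measure and proving its shadow lemma for the non-Riemannian functional $\mu_1-\mu_n$ is the other technical ingredient, for which I would adapt the frameworks of Quint and Coornaert cited in the introduction.
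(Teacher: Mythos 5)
Your upper bound is correct and is essentially the paper's own argument for Theorem~\ref{th - upper bound}: cover the limit set by images of a fixed ball under (uniformly) proximal group elements, bound the diameter of each image by $e^{-(\mu_1-\mu_2)(\g)}$ (Lemma~\ref{lem - quantified distortion of balls by loxodromic element}, with the covering property supplied by Lemma~\ref{lem - covering the limit set}), and observe that the resulting sum converges for $s>\delta_{1,2}$. The only difference is packaging: you work directly in the product with the max metric and use $\delta_{n-1,n}(\G)=\delta_{1,2}(\G)$, while the paper proves the bound for $\Lambda_\G\subset\proj(\R^n)$ and deduces the symmetric statement by applying it to the tensor product of the standard representation with its dual, via Lemma~\ref{l:CriticalExponentInf}; these are equivalent. (Bounded multiplicity of the cover is not needed for an upper bound on Hausdorff measure.)

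The lower bound has a genuine gap, and it is exactly the one you flag: the Frostman estimate $\nu(B_D(z,r))\lesssim r^{2\delta_{1,n}}$. Your shadows are boxes with factor diameters $a=e^{-(\mu_1-\mu_2)(\g)}$ and $b=e^{-(\mu_{n-1}-\mu_n)(\g)}$, and your shadow lemma combined with your (correct, for $n\geq 3$) inequality $\mu_1-\mu_n\geq(\mu_1-\mu_2)+(\mu_{n-1}-\mu_n)$ gives $\nu(\mathcal{O}_\g)\lesssim (ab)^{\delta_{1,n}}$. But for a shadow to \emph{contain} the trace of a round $D$-ball of radius $r$ you need both $a\gtrsim r$ and $b\gtrsim r$, while for its measure to be $\lesssim r^{2\delta_{1,n}}$ you need $ab\lesssim r^2$; together these force $a\asymp b\asymp r$, i.e.\ the two root gaps must pass through $\log(1/r)+O(1)$ at the \emph{same} group element, for every $r$ and every ray. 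That amounts to a uniform comparability of $(\mu_1-\mu_2)(\g_t)$ and $(\mu_{n-1}-\mu_n)(\g_t)$ along geodesic rays, which neither the Anosov property nor strong convex cocompactness provides — this eccentricity of shadows against round balls is precisely the fundamental obstruction to computing Hausdorff dimension in higher rank, and invoking ``transversality and regularity of the limit maps'' does not resolve it. The paper's proof is structurally different and never looks at round balls: it applies Coornaert's theorem to the Hilbert metric of the invariant convex domain to get $\DimH(\Lambda_\G,d_x)=\delta_H(\G)=2\delta_{1,n}(\G)$ for the Gromov quasi-metric $d_x$ on $\partial\Omega$ (Proposition~\ref{prop - dimension Gromov = exposant critique}, resting on Corollary~\ref{cor:deltaH=2delta1n}, which in turn needs the Cartan-versus-Jordan comparison of Theorem~\ref{theo-specialcaseSL(n,R)}), and then proves the pointwise comparison $d_x(p,q)\leq C\sqrt{d(p,q)\,d^*(p^*,q^*)}$ (Lemma~\ref{lemma gromov smaller euclid}) by a direct cross-ratio and law-of-sines computation using the tangent hyperplanes of $\partial\Omega$. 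Since $\sqrt{d\,d^*}\leq\frac{1}{2}(d+d^*)$ and the latter is a genuine metric on the product, monotonicity of Hausdorff dimension finishes the proof: the geometric mean absorbs the eccentricity automatically, doing for free what your round-ball Frostman estimate cannot.
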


For projective Anosov subgroups that are not convex-cocompact, composing with the representation of $\SL(n,\R)$ into $\SL(\mathrm{Sym}^2(\R^n))$ gives the following weaker result:
\begin{coro} \label{coro-NotConvexCocompact}
Let $\Gamma$ be a projective Anosov subgroup of $\SL(n,\R)$. Then
\[\delta_{1,n}(\Gamma) \leq \DimH(\xi^{sym}(\partial_\infty \Gamma)) \leq \delta_{1,2}(\Gamma)~.\]
\end{coro}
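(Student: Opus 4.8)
The plan is to deduce the corollary from Theorem~\ref{theo-MainTheorem} by composing $\Gamma$ with the symmetric square representation $\sigma\colon \SL(n,\R)\to \SL(\mathrm{Sym}^2(\R^n))$, $g\mapsto \mathrm{Sym}^2(g)$, whose image lies in $\SL(N,\R)$ with $N=\tfrac{n(n+1)}{2}$. This representation is chosen for two reasons. First, its image preserves a proper convex subset of $\proj(\mathrm{Sym}^2(\R^n))$, namely the projectivization $\proj(\mathcal{P})$ of the cone $\mathcal{P}$ of positive definite quadratic forms, on which $g$ acts by $Q\mapsto gQg^T$. Second, it rescales the relevant singular values so as to turn the two-sided bound $2\delta_{1,N}\le \DimH\le \delta_{1,2}$ of Theorem~\ref{theo-MainTheorem} into the desired $\delta_{1,n}\le \DimH\le \delta_{1,2}$.

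First I would record the effect of $\sigma$ on singular values. Writing $g=k_1 a k_2$ with $a=\diag(e^{\mu_1(g)},\dots,e^{\mu_n(g)})$, the operator $\sigma(g)$ has singular values $e^{\mu_i(g)+\mu_j(g)}$ for $1\le i\le j\le n$, since $\sigma$ sends the orthogonal factors $k_1,k_2$ to orthogonal transformations of $\mathrm{Sym}^2(\R^n)$ while $\sigma(a)$ is diagonal in the basis $\{e_i e_j\}$. Ordering decreasingly gives $\mu_1(\sigma(g))=2\mu_1(g)$, $\mu_2(\sigma(g))=\mu_1(g)+\mu_2(g)$ and $\mu_N(\sigma(g))=2\mu_n(g)$. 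Hence $\mu_1(\sigma(g))-\mu_2(\sigma(g))=\mu_1(g)-\mu_2(g)$, so $\delta_{1,2}(\sigma(\Gamma))=\delta_{1,2}(\Gamma)$; and $\mu_1(\sigma(g))-\mu_N(\sigma(g))=2(\mu_1(g)-\mu_n(g))$, so a change of variable in the counting function yields $\delta_{1,N}(\sigma(\Gamma))=\tfrac12\,\delta_{1,n}(\Gamma)$. The first identity also shows that $\mu_1(\sigma(\gamma))-\mu_2(\sigma(\gamma))$ grows linearly in word length, so $\sigma(\Gamma)$ is again projective Anosov; combined with the preserved proper convex set $\proj(\mathcal{P})$ this makes $\sigma(\Gamma)$ strongly projectively convex cocompact, and Theorem~\ref{theo-MainTheorem} applies to it.

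Next I would compare the two symmetric limit sets. The top singular direction of $\sigma(g)$ is $v_1\otimes v_1$, where $v_1$ is that of $g$, so the boundary maps of $\sigma(\Gamma)$ are Veronese images of those of $\Gamma$: $\xi_\sigma=V\circ\xi$ and $\xi^*_\sigma=V^*\circ\xi^*$, where $V\colon \proj(\R^n)\to\proj(\mathrm{Sym}^2(\R^n))$, $[v]\mapsto[v\otimes v]$, and $V^*$ is its dual analogue. Thus $\xi^{sym}_\sigma=(V\times V^*)\circ\xi^{sym}$. Since $V$ and $V^*$ are smooth embeddings of compact manifolds, $V\times V^*$ is bi-Lipschitz onto its image and therefore preserves Hausdorff dimension, giving $\DimH(\xi^{sym}_\sigma(\partial_\infty\Gamma))=\DimH(\xi^{sym}(\partial_\infty\Gamma))$. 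Feeding the three computed quantities into $2\delta_{1,N}(\sigma(\Gamma))\le \DimH(\xi^{sym}_\sigma(\partial_\infty\Gamma))\le \delta_{1,2}(\sigma(\Gamma))$ produces exactly $\delta_{1,n}(\Gamma)\le \DimH(\xi^{sym}(\partial_\infty\Gamma))\le \delta_{1,2}(\Gamma)$.

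The step I expect to be the main obstacle is confirming that $\sigma(\Gamma)$ genuinely satisfies the hypotheses of Theorem~\ref{theo-MainTheorem}. Via the criterion recalled before that theorem, this reduces to checking that $\sigma(\Gamma)$ is projective Anosov (immediate from the preserved singular-value gap together with the singular-value characterization of the Anosov property) and that it preserves a proper convex subset of $\proj(\mathrm{Sym}^2(\R^n))$. The delicate point here is that, although $\SL(n,\R)$ acts transitively, hence non-cocompactly, on $\proj(\mathcal{P})$, the relevant convex cocompactness concerns the action on the convex hull of the Veronese limit set $V(\xi(\partial_\infty\Gamma))$ sitting in $\partial\proj(\mathcal{P})$; one must verify this is the correct properly convex domain to invoke \cite{DGK}. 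By comparison, the singular-value bookkeeping and the bi-Lipschitz invariance of Hausdorff dimension are routine.
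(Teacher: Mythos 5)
Your proposal is correct, and for the lower bound it is essentially the paper's own argument: the paper likewise composes with $\mathrm{Sym}^2$, notes that the image preserves the cone of positive definite quadratic forms, invokes Danciger--Gu\'eritaud--Kassel (Theorem~\ref{th - projective anosov are convex cocompact}) to conclude that $\sigma(\Gamma)$ is strongly projectively convex cocompact, and applies the lower half of Theorem~\ref{theo-MainTheorem} together with the identity $2\delta_{1,n(n+1)/2}(\sigma(\Gamma))=\delta_{1,n}(\Gamma)$; you are in fact more careful than the paper, which leaves the Veronese identification of the two symmetric limit sets implicit. Where you genuinely diverge is the upper bound. The paper does not route it through the convex cocompact case: Theorem~\ref{th - upper bound} is proved directly for \emph{every} projective Anosov subgroup (by ball-distortion and covering estimates), giving $\DimH(\Lambda_\Gamma)\leq\delta_{1,2}(\Gamma)$ with no convexity hypothesis, and the symmetric version (Corollary~\ref{c - upper bound symmetric}) is then deduced by composing with the tensor product $\R^n\otimes{\R^n}^*$ of the standard representation and its dual, using Proposition~\ref{prop - SimpleRootsTensorProduct}, Lemma~\ref{l:CriticalExponentInf} and the symmetry $\delta_{1,2}=\delta_{n-1,n}$ to see that $\delta_{1,2}$ is unchanged. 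Your route---applying the upper half of Theorem~\ref{theo-MainTheorem} to $\sigma(\Gamma)$, where $\mu_1-\mu_2$ is preserved---is equally valid and more economical if Theorem~\ref{theo-MainTheorem} is taken as a black box (it is also what the paper's introduction advertises); the paper's route has the advantage of showing that the upper bound requires no detour through convex cocompactness, and of relying on a limit-map identification (for tensor products) that is stated explicitly in the text, whereas you must justify $\xi_\sigma=V\circ\xi$ and $\xi^*_\sigma=V^*\circ\xi^*$ by hand, which your attracting-fixed-point argument does. Finally, the ``delicate point'' you flag at the end is not actually an obstacle: Theorem~\ref{th - projective anosov are convex cocompact} as stated already asserts that the $\mathrm{Sym}^2$ image of any projective Anosov subgroup is strongly projectively convex cocompact---the existence of the convex core on which the action is cocompact is part of the \emph{conclusion} of the DGK criterion, not a hypothesis you need to verify.
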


Note that Theorem \ref{theo-MainTheorem} is ``sharp'' in the sense that, when $\Gamma$ is a convex cocompact subgroup in $\SO(n-1,1)\subset \SL(n,\R)$, we have
\[2\delta_{1,n}(\Gamma) = \DimH(\xi^{sym}(\partial_\infty\Gamma)) = \delta_{1,2}(\Gamma)~.\]

Corollary \ref{coro-NotConvexCocompact} is weaker since $\delta_{1,n}(\Gamma)$ is always less than or equal to $\frac{1}{2} \delta_{1,2}(\G)$. However, it cannot be sharpened in full generality. For instance, let $\Gamma$ be a cocompact lattice in $\SL(2,\R)$ and let $\rho_{irr}$ and $\rho_{red}$ denote respectively the irreducible and reducible representations of $\SL(2,\R)$ into $\SL(3,\R)$. Then $\rho_{irr}(\Gamma)$ and $\rho_{red}(\Gamma)$ are projective Anosov with limit set a smooth curve (of Hausdorff dimension $1$). However, their critical exponents differ:
\begin{itemize}
\item $\rho_{irr}(\Gamma)$ is convex cocompact and
\[2\delta_{1,3}(\rho_{irr}(\G)) = \delta_{1,2}(\rho_{irr}(\G)) = 1~.\]
\item $\rho_{red}(\Gamma)$ is not convex cocompact and
\[\delta_{1,3}(\rho_{red}(\G)) = \frac{1}{2} \delta_{1,2}(\rho_{red}(\G)) = 1~.\]
\end{itemize}

Similar results where obtained simultaneously by Pozzetti--Sambarino--Wienhard \cite{PSW19} and, shortly after the first version of this paper appeared on arXiv, by Dey and Kapovich \cite{DeyKapovich}.

Let us now discuss further our main theorem.

\subsubsection*{Lower inequality}

The main motivation for the lower inequality in Theorem~\ref{theo-MainTheorem} was to generalize the following theorem of Crampon:

\begin{theo}[\cite{crampon2011dynamics}]
Let $\G \subset \SL(n,\R)$ be a Gromov hyperbolic group acting properly discontinuously and cocompactly on a strictly convex open domain $\Omega$ in $\proj(\R^n)$. Then
\[2\delta_{1,n} \leq n-2~,\]
with equality if and only if $\Gamma$ is conjugate to a subgroup of $\SO(n-1,1)$ (in which case $\Omega$ is projectively equivalent to the hyperbolic space $\Hyp^{n-1}$).
\end{theo}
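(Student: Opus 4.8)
The plan is to recognize $2\delta_{1,n}$ as a dynamical entropy and to bound it via the geodesic flow of the Hilbert metric. First I would pass to the compact quotient $M = \Omega/\G$ equipped with the Hilbert Finsler metric $d_\Omega$, and observe that the translation length of a hyperbolic $\g \in \G$ in $d_\Omega$ equals $\frac12\log(\lambda_1(\g)/\lambda_n(\g))$, which is asymptotic to $\frac12(\mu_1(\g) - \mu_n(\g))$; more precisely, cocompactness gives that $d_\Omega(o,\g o)$ and $\frac12(\mu_1(\g)-\mu_n(\g))$ differ by a bounded amount along the orbit. Hence the counting quantity defining $\delta_{1,n}$ and $\Card\{\g : d_\Omega(o,\g o) \le R\}$ have comparable exponential rates up to the factor $2$, so that $2\delta_{1,n}$ equals the volume entropy $h$ of $(\Omega, d_\Omega)$. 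Since the action is cocompact, Manning's theorem together with the Anosov property of the flow gives $h = h_{\mathrm{top}}(\varphi)$, the topological entropy of the Hilbert geodesic flow $\varphi^t$ on the unit tangent bundle $HM$, a compact manifold of dimension $2n-3$.

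Next I would invoke Benoist's theorem: since $\G$ is Gromov hyperbolic and $\Omega$ is strictly convex, $\partial\Omega$ is of class $C^1$ and $\varphi^t$ is Anosov, with a splitting $T(HM) = \R\cdot X \oplus E^s \oplus E^u$ where $\dim E^s = \dim E^u = n-2$. Choosing the measure of maximal entropy $m$, the Ruelle inequality yields
\[ h_{\mathrm{top}}(\varphi) = h_m(\varphi^1) \le \int_{HM} \chi^+ \, dm, \]
where $\chi^+$ denotes the sum of the positive Lyapunov exponents, i.e. the exponential expansion rate of the unstable Jacobian $\det(d\varphi^t|_{E^u})$. It therefore suffices to establish the pointwise bound $\limsup_{t\to\infty} \frac1t \log \det(d\varphi^t|_{E^u}) \le n-2$ along every orbit.

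The heart of the argument — and the step I expect to be hardest — is this pointwise estimate, which requires a concrete description of the unstable Jacobi fields of the Hilbert metric. I would work with the affine connection adapted to the projective/Hilbert structure and write the linearized flow on a Jacobi field as a linear system whose coefficients encode the second-order behaviour of $\partial\Omega$ (a projective analogue of the shape operator). The convexity of $\Omega$ forces a sign condition on these coefficients, from which one bounds the expansion of each unstable direction and shows the total unstable expansion rate does not exceed $n-2$. As a sanity check, in the Klein model of $\Hyp^{n-1}$ the flow is the Riemannian hyperbolic geodesic flow, all $n-2$ unstable exponents equal $1$, and the bound is attained.

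Finally, for the rigidity statement I would analyse the equality case: if $2\delta_{1,n} = n-2$, equality must hold throughout, forcing both equality in Ruelle (so that $m$ is of Liouville type with $\chi^+ = n-2$ $m$-almost everywhere) and, by density of periodic orbits and continuity, the pointwise bound to be saturated everywhere. Saturation of the unstable-expansion estimate is an umbilicity condition on $\partial\Omega$ in the projective sense, which forces $\partial\Omega$ to be a quadric; hence $\Omega$ is projectively equivalent to the ball, i.e. to the Klein model of $\Hyp^{n-1}$, and $\G$ is conjugate into $\SO(n-1,1)$. Carrying out the propagation of equality and the quadric rigidity carefully is the second delicate point of the proof.
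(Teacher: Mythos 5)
First, a caveat: the paper does not prove this statement at all --- it quotes it from Crampon's thesis \cite{crampon2011dynamics}, and only the \emph{inequality} half is recovered later as a special case of Theorem \ref{theo-MainTheorem} (via $2\delta_{1,n}\leq \DimH(\Lambda^{\mathrm{sym}}_\G)$ and the fact that $\Lambda^{\mathrm{sym}}_\G$ is a Lipschitz $(n-2)$-manifold); the rigidity half is never reproved. So your proposal must be measured against Crampon's own argument, whose global architecture you have correctly reconstructed (identify $2\delta_{1,n}$ with an entropy of the Hilbert geodesic flow, invoke Benoist's theorem that the flow is Anosov with $\dim E^u=n-2$, apply Ruelle's inequality to the measure of maximal entropy). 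The genuine gap is in what you yourself single out as the heart of the argument: the pointwise bound $\limsup_{t\to\infty}\frac1t\log\det\bigl(d\varphi^t|_{E^u}\bigr)\leq n-2$ along \emph{every} orbit is false whenever $\Omega$ is not an ellipsoid, so no convexity or sign argument can establish it. Test it on the periodic orbit of $\g\in\G$: the period is $T=\frac12(\lambda_1-\lambda_n)(\g)$, and over one period the log of the unstable Jacobian is $\sum_{i=2}^{n-1}(\lambda_1-\lambda_i)(\g)=(n-1)\lambda_1(\g)+\lambda_n(\g)$ (using $\sum_i\lambda_i(\g)=0$), so the expansion rate along this orbit equals
\[
\frac{(n-1)\lambda_1(\g)+\lambda_n(\g)}{\frac12\bigl(\lambda_1(\g)-\lambda_n(\g)\bigr)}\;=\;(n-2)\;+\;n\,\frac{\lambda_1(\g)+\lambda_n(\g)}{\lambda_1(\g)-\lambda_n(\g)}~,
\]
which exceeds $n-2$ as soon as $\lambda_1(\g)+\lambda_n(\g)>0$. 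When $\Omega$ is divisible, strictly convex and not an ellipsoid, $\G$ is Zariski dense in $\SL(n,\R)$ \cite{Benoist2001convexe} and its limit cone has non-empty interior \cite{benoist1997proprietes}; since that cone is symmetric under the opposition involution (which flips the sign of $\epsilon_1+\epsilon_n$), such elements $\g$ exist in abundance. The bound you want only holds on average, never pointwise.

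The correct replacement --- and this is Crampon's key lemma --- is an exact \emph{symmetry}, not a one-sided estimate: projective flatness pairs the stable and unstable exponents so that $\chi_i^+-\chi_i^-=2$, i.e. $\chi_i^{\pm}=\pm1+\eta_i$ with the \emph{same} defect $\eta_i$ (on the periodic orbit above, $\chi_i^{\pm}=\bigl(\lambda_1-\lambda_i\bigr)(\g)/T$ and $-\bigl(\lambda_i-\lambda_n\bigr)(\g)/T$, whose difference is exactly $2$, while the defect is the $\lambda_1+\lambda_n$ term and changes sign when $\g$ is replaced by $\g^{-1}$). One then applies Ruelle's inequality \emph{twice}, to $\varphi^t$ and to the time-reversed flow (conjugate to $\varphi^t$ by the flip $v\mapsto -v$, hence with the same topological entropy and with exponents $-\chi_i^{\mp}$), and sums: $2h_{\mathrm{top}}\leq\int\sum_i(\chi_i^+-\chi_i^-)\,dm=2(n-2)$, the defects cancelling. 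Your equality-case sketch inherits the same flaw, since a bound that fails pointwise cannot be ``saturated everywhere''; the actual rigidity extracts from equality in Ruelle \emph{in both time directions} that the Bowen--Margulis measure is SRB for both flows, hence absolutely continuous, forcing $\eta_i=0$ almost everywhere, and then invokes Benoist's rigidity for divisible convex sets to conclude that $\partial\Omega$ is a quadric. Two smaller points: Manning's theorem is a Riemannian, nonpositively-curved statement, but only the general inequality $h_{\mathrm{vol}}\leq h_{\mathrm{top}}$ is actually needed; and the coarse identity $d_\Omega(o,\g o)=\frac12(\mu_1-\mu_n)(\g)+O(1)$ is true but is itself a lemma requiring proof --- note that the present paper sidesteps it entirely by working with closed geodesics, where $\ell_H(\g)=\frac12(\lambda_1-\lambda_n)(\g)$ holds exactly, combined with Coornaert--Knieper \cite{CK} and Theorem \ref{theo-specialcaseSL(n,R)} to obtain Corollary \ref{cor:deltaH=2delta1n}.
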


In that case, $\Gamma$ is projective Anosov, $\xi(\partial_\infty \G)$ is the boundary of $\Omega$ and $\xi^*(\partial_\infty \G)$ the boundary of the dual convex set. We will show (Theorem \ref{theo - HD Symmetric boundary convex}) that $\xi^{\textit{sym}}(\partial_\infty(\G))$ is a Lipschitz manifold of dimension $n-2$, so that $\DimH(\partial_\infty \G) = n-2$. Therefore, Theorem \ref{theo-MainTheorem} recovers Crampon's inequality as a particular case.

For $\Gamma\subset \SL(n,\R)$ strongly projectively convex-cocompact, we initially hoped to prove the a priori stronger inequality
\[2 \delta_{1,n}(\Gamma) \leq \DimH(\xi(\partial_\infty \G)).\] But several attempts with slightly different methods always led to a ``symmetric'' version of the limit set. This raised the following question:
\begin{question} \label{ques-HaudorffDimensionSymmetric}
Let $\G \subset \SL(n,\R)$ be a projective Anosov subgroup. Do we have
\[\DimH(\xi(\partial_\infty \G)) = \DimH(\xi^*(\partial_\infty \G)) = \DimH(\xi^{sym}(\partial_\infty \G))~?\]
\end{question}
While our na\"ive intuition leaned towards a positive answer, the following case might actually provide a counter-example: Let $\Gamma$ be a lattice in $\SL(2,\R)$, $u:\G \to \R^2$ a function satisfying the cocycle relation
\[u(\gamma \eta) = u(\gamma) + \gamma \cdot u(\eta)~,\]
and let $\rho_u$ be the representation of $\Gamma$ into $\SL(3,\R)$ given by
\[\rho_u(\gamma) = \left( \begin{matrix} \gamma & u(\gamma) \\ 0 & 1\end{matrix} \right)~.\]

Then $\rho_u(\G)$ is projective Anosov\footnote{Indeed, for $u$ small enough, this follows from the openness of the Anosov property, and moreover, $\rho_u$ is conjugated to $\rho_{\epsilon u}$ for all $\epsilon>0$.}. Let $\xi_u: \partial_\infty \G \to \proj(\R^3)$ and $\xi_u^*: \partial_\infty \G \to \proj({\R^3}^*)$ denote the boundary maps associated to $\rho_u(\G)$. Then $\xi_u(\partial_\infty\G) = \xi_0(\partial_\infty \G)$ is a projective line. On the other side, the dual limit set $\xi_u^*(\partial_\infty \G)$ is not a projective line as soon as $u$ is not a coboundary\footnote{Recall that a cocycle $u$ is a coboundary if there exists $v\in \R^2$ such that $u(\gamma) = \gamma \cdot v - v$ for all $\gamma \in \G$.} and it could plausibly have Hausdorff dimension $>1$. 

There are situations where the equality in Question \ref{ques-HaudorffDimensionSymmetric} is known to be true: If $\G$ preserves a non-degenerate quadratic form $\mathbf q$ on $\R^n$, then $\xi^*$ is the image of $\xi$ by the isomorphism $\R^n\simeq {\R^n}^*$ defined by $\mathbf q$, and therefore
\[\DimH(\xi(\partial_\infty \G)) = \DimH(\xi^*(\partial_\infty \G)) = \DimH(\xi^{sym}(\partial_\infty \G))~.\]
In that case we also have that $\mu_n(\gamma) = - \mu_1(\gamma)$ for all $\gamma \in \G$, so that
\[2\delta_{1,n}(\G) = \delta_1(\G) \overset{\textrm{def}}{=} \limsup_{R\to +\infty} \frac{1}{R} \log \left(\Card \{\gamma \in \G \mid \mu_1(\gamma) \leq R\}\right)~.\]

Those strongly projectively convex-cocompact groups preserving a non degenerate quadratic form are precisely the $\Hyp^{p,q}$-convex cocompact groups introduced in \cite{DGKSOpq}, whose critical exponent was studied by the first two authors in \cite{GM18}. There, the authors introduce a \emph{pseudo-hyperbolic critical exponent} $\delta_{\Hyp^{p,q}}(\G)$ and prove that it coincides with $\delta_1(\G)$. Theorem \ref{theo-MainTheorem} thus gives an alternative proof of the inequality 
\[\delta_{\Hyp^{p,q}}(\G) \leq \DimH(\xi(\partial_\infty\Gamma))\]
in \cite[Theorem 1.2]{GM18}.

A rigidity statement in that context was obtained by Collier--Tholozan--Toulisse in \cite{CCT17} for $\Hyp^{2,q}$-convex cocompact surface groups, which are the images of fundamental groups of closed surfaces by maximal representations into $\SO(2,q+1)$. Their limit set is a Lipschitz curve (of Hausdorff dimension $1$), and they prove that the critical exponent $\delta_1$ is $\leq 1$, with equality if and only if the group is contained in $\SO(2,1)\times \SO(q)$ (up to conjugation and finite index). Together with Crampon's theorem, this leads us to formulate the following conjecture:

\begin{conj}
Let $\Gamma \subset \SL(n,\R)$ be a strongly projectively convex cocompact subgroup. If $2\delta_{1,n} = \DimH(\xi^{sym}(\partial_\infty \G))$, then $\G$ is conjugated to a subgroup of $\SO(n-1,1)$.
\end{conj}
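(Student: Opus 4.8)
The plan is to promote the dimension equality $2\delta_{1,n}(\G)=\DimH(\xi^{sym}(\partial_\infty\G))$ into a rigidity statement about the singular value spectrum of $\G$, and then to recognise that spectrum as the one imposed by $\SO(n-1,1)$. Since $\G$ is projectively convex cocompact it preserves a strictly convex domain $\Omega$ with $C^1$ boundary and acts cocompactly on its convex core (by \cite{DGK}), so the Hilbert geodesic flow has a compact non-wandering set and the full Patterson--Sullivan/thermodynamic machinery is available. The target is transparent: for $\gamma$ in (a conjugate of) a subgroup of $\SO(n-1,1)$ one has $\mu_2(\gamma)=\dots=\mu_{n-1}(\gamma)=0$ and $\mu_n(\gamma)=-\mu_1(\gamma)$. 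Because $\xi^{sym}$ only records the line $\xi(x)$ and the hyperplane $\xi^*(x)$, the part of the spectrum directly accessible to the Hausdorff dimension is the pair of extreme gaps, and I would aim first to prove that equality forces, cohomologically along the flow, $\mu_1-\mu_2 \equiv \mu_{n-1}-\mu_n \equiv \tfrac12(\mu_1-\mu_n)$, which already pins $\mu_2\equiv\mu_{n-1}\equiv\tfrac12(\mu_1+\mu_n)$.

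The engine for this identity should be the mechanism behind the lower inequality itself. I expect that proof to compare the extrinsic metric $d_{sym}$ on $\proj(\R^n)\times\proj({\R^n}^*)$ along $\xi^{sym}$ with the Gromov metric $d_G$ of the Hilbert geometry, for which Coornaert's theorem gives $\DimH_{d_G}=\delta_{1,n}$; the factor $2$ reflects a one-sided snowflake comparison $d_{sym}\gtrsim d_G^{1/2}$, which is what yields $\DimH(\xi^{sym})\ge 2\delta_{1,n}$. The contraction rates governing $d_{sym}$ in the two factors are respectively $\mu_1-\mu_2$ and $\mu_{n-1}-\mu_n$, while $d_G$ is governed by the Hilbert cocycle $\mu_1-\mu_n$. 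I would make this dictionary precise through a Bowen-type pressure equation, writing both $2\delta_{1,n}$ and $\DimH(\xi^{sym})$ as the zeros of pressures of explicit H\"older potentials built from these gaps on the compact non-wandering set. The inequality of Theorem~\ref{theo-MainTheorem} then becomes a pointwise inequality between the corresponding potentials, and equality of the pressure-zeros, by strict convexity of the pressure together with the Liv\v{s}ic theorem, forces the potentials to be cohomologous, which is exactly the identity above.

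It then remains to upgrade this extreme-gap rigidity to the algebraic conclusion. The identity $\mu_1-\mu_2\equiv\tfrac12(\mu_1-\mu_n)$ has a direct geometric meaning: it says that the osculation of $\partial\Omega$ along the limit set is uniformly round and of class $C^{1,1}$, i.e. that the Hilbert metric is infinitesimally Riemannian there, so that $d_{sym}\asymp d_G^{1/2}$ now holds on the whole limit set. This is precisely the equality regime of Crampon's theorem, and I would try to run his curvature argument \cite{crampon2011dynamics} to conclude that $\Omega$ is projectively an ellipsoid along the convex core; equivalently, the symmetry should produce a $\G$-invariant nondegenerate quadratic form $\mathbf q$, necessarily of signature $(n-1,1)$, so that $\G\subset\SO(n-1,1)$ as in the self-dual picture recalled after Question~\ref{ques-HaudorffDimensionSymmetric} and in \cite{CCT17,GM18}. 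In the divisible case the limit set is all of $\partial\Omega$ and $\DimH(\xi^{sym})=n-2$, so this step recovers exactly Crampon's rigidity.

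The hard part will be twofold. First, because the two projective factors contract at genuinely different rates, $\xi^{sym}$ is self-affine rather than conformal, so the naive Bowen formula does not apply verbatim and the variational/sub-additive setup must be handled with care; this mismatch is the very source of the gap between the two bounds in Theorem~\ref{theo-MainTheorem}, and it is what makes the localisation of the \emph{global} dimension equality to an everywhere-up-to-coboundary equality of contraction rates delicate. Second, and more seriously for $n\ge 5$, the symmetric limit set is blind to the middle of the spectrum, so extreme-gap rigidity alone does not evidently kill the intermediate exponents $\mu_3,\dots,\mu_{n-2}$ nor establish self-duality; closing this gap will require extra geometric input — convexity and minimality of the $\G$-action on its limit set, Zariski density, or the introduction of the auxiliary exterior-power representations $\Lambda^k\R^n$ to access the remaining gaps — in order to pass from ``round osculation along the limit set'' to the global statement that $\Omega$ is an ellipsoid.
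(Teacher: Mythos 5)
The statement you were asked to prove is not a theorem of the paper but its Conjecture: the authors explicitly formulate it as an open problem, motivated by the equality cases in Crampon's theorem and in Collier--Tholozan--Toulisse, and they offer no proof. So there is no argument in the paper to compare yours against; what you have written is a research program, and the two difficulties you yourself flag are not technical details but precisely the open content of the conjecture.

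Concretely, your central mechanism fails at its first step. You want to realize both $2\delta_{1,n}(\G)$ and $\DimH(\xi^{sym}(\partial_\infty\G))$ as zeros of pressure functions of H\"older potentials on the same flow, so that equality of the zeros plus Liv\v{s}ic forces $\mu_1-\mu_2$, $\mu_{n-1}-\mu_n$ and $\tfrac12(\mu_1-\mu_n)$ to be cohomologous. But no Bowen-type formula for $\DimH(\xi^{sym}(\partial_\infty\G))$ is available: the map $\xi^{sym}$ lands in a product whose factors are contracted at the two different rates $\mu_1-\mu_2$ and $\mu_{n-1}-\mu_n$, so the boundary dynamics is self-affine rather than conformal, and Hausdorff dimension of self-affine sets is not a pressure zero in general. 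In the paper itself the two bounds of Theorem \ref{theo-MainTheorem} come from two unrelated one-sided estimates --- Coornaert's theorem together with the metric comparison $d_x(p,q)\leq C\sqrt{d(p,q)d^*(p^*,q^*)}$ for the lower bound, and a ball-distortion covering argument for the upper bound --- and an equality between the outputs of two one-sided inequalities produces no identity of pressures for Liv\v{s}ic to act on. Calling this step ``delicate'' does not close it; closing it is essentially the conjecture. Second, even granting the extreme-gap identity, the conclusion $\G\subset\SO(n-1,1)$ requires killing the middle exponents $\mu_2,\dots,\mu_{n-1}$, which $\xi^{sym}$ cannot see when $n\geq 5$, and your appeal to Crampon's rigidity is out of range: his equality analysis is carried out for divisible convex sets, where the group acts cocompactly on all of $\Omega$ and the geodesic flow lives on a compact quotient manifold, whereas for a convex cocompact group the limit set is a proper subset of $\partial\Omega$, the boundary away from the limit set is unconstrained, and no analogue of his argument is known there. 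So both halves of your plan rest on rigidity statements that are themselves at the level of the open problem.
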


Note that Potrie--Sambarino proved in \cite{potrie2014} a similar but stronger inequality for Hitchin representations of surface groups. If $\G$ is the fundamental group of a closed surface and $\rho:\G\to \SL(n,\R)$ is a Hitchin representation, then $\rho(\G)$ is projective Anosov and $\xi_\rho(\partial_\infty \G)$ is a $\mathcal{C}^1$ curve, of Hausdorff dimension $1$. However, they prove that
\[2\delta_{1,n}(\rho(\G)) \leq \frac{2}{n-1}~.\]
with equality if and only if $\rho = m_{irr} \circ j$ where $j:\G \to \SL(2,\R)$ is Fuchsian and $m_{irr}: \SL(2,\R) \to \SL(n,\R)$ is irreducible.

\subsubsection*{Upper inequality}

The upper inequality $\DimH(\xi^{sym}(\partial_\infty \G)) \leq \delta_{1,2}(\G)$ is proven independently by Pozzetti--Sambarino--Wienhard in \cite{PSW19}. There, they also find a sufficient criterion for this inequality to be an equality. This criterion is satisfied by many families of Anosov groups, showing in particular that the equality can be stable under small deformations of $\G$. Their work generalizes a result of Potrie--Sambarino for surface groups embedded in $\SL(n,\R)$ via a Hitchin representation. They are in stark contrast with the rigidity phenomena for the $\delta_{1,n}$ discussed above.

Here we merely give an example where equality holds:
\begin{theo} \label{theo - simple root exponent tensor product Fuchsian}
Let $\G$ be the fundamental group of a closed surface of genus greater than $1$ and let $j_1$ and $j_2$ be two Fuchsian representations of $\G$ into $\SL(2,\R)$. Then $j_1\otimes j_2(\G) \subset \SL(2,\R)\times \SL(2,\R) \subset \SL(4,\R)$ is projective Anosov, $\xi_{j_1\otimes j_2}^{sym}(\partial_\infty \G)$ is a Lipschitz curve and
\[\delta_{1,2}(j_1\otimes j_2(\G)) = 1~.\]
\end{theo}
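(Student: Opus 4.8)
The plan is to establish the three assertions in turn, working throughout with the identification $\R^4 \cong \R^2\otimes\R^2$, under which $j_1\otimes j_2(\gamma)$ acts as the tensor product $j_1(\gamma)\otimes j_2(\gamma)$, together with the linear identification $\R^2\otimes\R^2 \cong M_2(\R)$ of $\R^4$ with $2\times 2$ matrices via $v\otimes w\mapsto vw^T$. The first thing I would record is the singular value data. For $g\in\SL(2,\R)$ one has $\mu_2(g)=-\mu_1(g)$, so the four singular values (in log scale) of $j_1(\gamma)\otimes j_2(\gamma)$ are the numbers $\pm\mu_1(j_1(\gamma))\pm\mu_1(j_2(\gamma))$. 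Since $\mu_1(j_i(\gamma))\geq 0$, ordering them yields $\mu_1(j_1\otimes j_2(\gamma))=\mu_1(j_1(\gamma))+\mu_1(j_2(\gamma))$ and $\mu_2(j_1\otimes j_2(\gamma))=|\mu_1(j_1(\gamma))-\mu_1(j_2(\gamma))|$, hence the key identity
\[\mu_1(j_1\otimes j_2(\gamma))-\mu_2(j_1\otimes j_2(\gamma)) = 2\min\big(\mu_1(j_1(\gamma)),\mu_1(j_2(\gamma))\big) = \min(\ell_1(\gamma),\ell_2(\gamma)),\]
where $\ell_i(\gamma)=2\mu_1(j_i(\gamma))=d_{\Hyp^2}(o,j_i(\gamma)\cdot o)$ is the displacement of a base point under the Fuchsian action $j_i$.

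For the projective Anosov property, each $j_i$ is Fuchsian, hence convex cocompact in rank one, so $\ell_i(\gamma)\geq c_i|\gamma|-C_i$ for constants $c_i,C_i>0$ and the word length $|\gamma|$. The identity above then gives $\mu_1-\mu_2 \geq \min(c_1,c_2)|\gamma|-\max(C_1,C_2)$; as $\G$ is Gromov hyperbolic, this linear growth of the top singular gap is exactly the standard singular-value characterization of the projective Anosov property. The limit map is then $\xi(\eta)=\xi_1(\eta)\otimes\xi_2(\eta)$, where $\xi_i:\partial_\infty\G\to\proj(\R^2)$ are the boundary circles of $j_i$, which are homeomorphisms onto $\proj(\R^2)$.

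For the critical exponent, the identity shows $\{\gamma \mid \mu_1-\mu_2\leq R\}=\{\gamma\mid \ell_1(\gamma)\leq R\}\cup\{\gamma\mid \ell_2(\gamma)\leq R\}$. Each set $\{\gamma\mid \ell_i(\gamma)\leq R\}$ is an orbit-counting set for the cocompact Fuchsian group $j_i(\G)$, whose critical exponent equals $\DimH(\partial_\infty\Hyp^2)=1$; thus its cardinality has exponential growth rate $1$. A union bound gives $\delta_{1,2}\leq 1$, and keeping a single term gives $\delta_{1,2}\geq 1$, so $\delta_{1,2}(j_1\otimes j_2(\G))=1$.

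The delicate point is the Lipschitz curve claim, which I expect to be the crux. Reparametrizing the boundary by $x=\xi_1(\eta)$, the curve $\eta\mapsto(\xi_1(\eta),\xi_2(\eta))$ becomes the graph of the circle homeomorphism $f=\xi_2\circ\xi_1^{-1}$, and $\xi$ is the image of this graph under the smooth, hence locally Lipschitz, Segre embedding $\proj(\R^2)\times\proj(\R^2)\hookrightarrow\proj(\R^4)$. Here one must resist the hope that $\xi$ be Lipschitz in the parameter $\eta$: the conjugacy $f$ between two distinct Fuchsian structures is only H\"older, and generically totally singular. The resolution is that a \emph{Lipschitz curve} means a rectifiable one, and the graph of a monotone function is automatically rectifiable, since on any arc its length is at most the length of the $x$-arc plus the variation of $f$, which is finite because $f$, being an orientation-preserving homeomorphism, is monotone. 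Hence $\xi(\partial_\infty\G)$ is rectifiable. Finally, since $\SL(2,\R)\times\SL(2,\R)$ preserves the determinant form on $M_2(\R)$, a nondegenerate form of signature $(2,2)$, the dual map $\xi^*$ is the image of $\xi$ under the projective isomorphism $\proj(\R^4)\to\proj({\R^4}^*)$ induced by this form, which is Lipschitz; so $\xi^*$ is rectifiable as well and $\xi^{sym}=(\xi,\xi^*)$ is a Lipschitz curve. Everything outside this last monotonicity argument reduces to a direct computation with singular values and standard rank-one orbit counting.
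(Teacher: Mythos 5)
Your proof is correct. A point of comparison worth knowing: the paper itself contains no proof of this theorem --- it is stated in the introduction, and the body of the paper only proves the two general bounds --- so your argument fills a genuine gap rather than duplicating an existing one. What you do by hand is essentially a self-contained specialization of the paper's Section 2 machinery: the identity $\mu_1-\mu_2=2\min\bigl(\mu_1(j_1(\gamma)),\mu_1(j_2(\gamma))\bigr)$ is Proposition \ref{prop - SimpleRootsTensorProduct} for $G=\SL(2,\R)\times\SL(2,\R)$, with $\Theta$ the pair of simple roots (one per factor) and $\rho_\Theta$ the tensor product of the two standard representations; your union-bound computation of $\delta_{1,2}$ is Lemma \ref{l:CriticalExponentInf} in this case, combined with the \v{S}varc--Milnor lemma and the classical fact that a cocompact Fuchsian group has critical exponent $1$ (your normalization $\ell_i(\gamma)=2\mu_1(j_i(\gamma))=d_{\Hyp^2}(o,j_i(\gamma)\cdot o)$ is what makes the answer come out to exactly $1$); the identification of the limit map as $\xi_1\otimes\xi_2$ is the paper's proposition on boundary maps of tensor products; and your use of the invariant determinant form of signature $(2,2)$ to write $\xi^*$ as the composition of $\xi$ with a smooth duality is exactly the mechanism the introduction describes for groups preserving a nondegenerate quadratic form. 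The one ingredient with no counterpart anywhere in the paper is the Lipschitz claim, and your treatment of it is right, including the correct diagnosis that the parametrization $\eta\mapsto\xi^{sym}(\eta)$ cannot be expected to be Lipschitz (the conjugacy $f=\xi_2\circ\xi_1^{-1}$ is typically singular), so the statement must concern the image; the crux is monotonicity of $f$, not its regularity. If you want the conclusion in the stronger form of a Lipschitz submanifold rather than merely a rectifiable curve, note that the graph of a monotone circle map becomes, after rotating coordinates by $\pi/4$ (i.e. working with $x+f(x)$ and $x-f(x)$), locally the graph of a $1$-Lipschitz function; since smooth embeddings (the Segre map, and $p\mapsto(p,\Phi(p))$ for the duality $\Phi$) carry Lipschitz graphs to Lipschitz graphs locally, $\xi^{sym}(\partial_\infty\G)$ is a Lipschitz curve in this stronger sense as well, which is the sense used by Mess and by Collier--Tholozan--Toulisse, and which in particular re-confirms $\DimH(\xi^{sym}(\partial_\infty\G))=1=\delta_{1,2}$.
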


The groups to which this theorem applies are the fundamental groups of globally hyperbolic Cauchy compact anti-de Sitter spacetimes studied by Mess \cite{mess2007lorentz}. They form a connected component in the space of surface groups embedded in $\SL(2,\R)\times \SL(2,\R) \simeq \SO(2,2)$. This class of examples is not covered by the main result of Pozzetti--Sambarino--Wienhard \cite{PSW19}\footnote{It is however a particular case of the main result of their new work \cite{PSW19b}.}.

On the other hand, a Fuchsian group of $\SL(2,\R)$ embedded reducibly in $\SL(3,\R)$ gives an example where $\DimH(\xi^{sym}(\G)) < \delta_{1,2}$. Determining a necessary and sufficient criterion for the equality to hold seems difficult.

\subsection{Further results and strategy of the proof}

Section \ref{sec - Background} introduces the background of this work (Anosov groups, critical exponents, convex projective geometry). In particular, we explain in Section \ref{subsec - fundamental weights and fundamental representations} how any Anosov subgroup of a semisimple Lie group $G$ defines a projective Anosov group after composing with a suitably chosen linear representation $\tau$ of $G$. In Section \ref{subsec - Critical exponents symmetric space} we introduce the various critical exponents $\delta_\alpha(\G)$ of an Anosov group $\G\subset G$ (depending on a choice of linear form $\alpha$ on a Cartan subalgebra of $G$) and we prove a fairly general result about when these critical exponents coincide with the corresponding \emph{entropy} $h_\alpha(\Gamma)$ (see Definition \ref{def - entropy}).

In Section \ref{sec - lower bound}, we prove the lower inequality of Theorem \ref{theo-MainTheorem}. The proof follows a strategy similar to that of \cite{GM18}, with the Hilbert geometry of $\Omega$ replacing the pseudo-Riemannian geometry of $\Hyp^{p,q}$. More precisely, we first notice that $2\delta_{1,n}(\Gamma)$ coincides with the critical exponent of $\Gamma$ acting on $\Omega$ equiped with its \emph{Hilbert metric}. Applying a theorem of Coornaert (Theorem \ref{theo - coornaert}), we deduce the equality
\[2\delta_{1,n}(\Gamma) = \DimH(\partial_\infty \Gamma, d_{\textit{Gromov}})~,\]
the Hausdorff dimension of $\partial_\infty \Gamma$ equipped with \emph{Gromov's quasidistance} $d_{\textit{Gromov}}$ (see Definition \ref{def - gromov distance}). Finally, we prove that, for any Gromov hyperbolic convex set $\Omega \subset \proj(\R^n)$, there exists a constant $C$ such that, for any $p,q\in \partial \Omega$, 
\[d_{\textit{Gromov}}(p,q) \leq C \sqrt{d(p,q) d^*(p^*,q^*)}~,\]
where $p^*$ and $q^*$ denote respectively the hyperplanes tangent to $\Omega$ at $p$ and $q$ and $d$ and $d^*$ denote Riemannian distances on $\proj(\R^n)$ and $\proj({\R^n}^*)$ respectively. The inequality
\[\DimH(\partial_\infty \Gamma, d_{\textit{Gromov}}) \leq \DimH(\xi_{\textit{sym}}(\partial_\infty \Gamma))\]
then follows from elementary comparison results between Hausdorff dimensions.

Section \ref{sec - upper bound} is devoted to the proof of the second upper inequality in Theorem \ref{theo-MainTheorem}. Let $\Gamma$ be a projective Anosov subgroup of $\SL(n,\R)$. Roughly speaking, $e^{\mu_2(\gamma)-\mu_1(\gamma)}$ controls the Lipschitz factor of an element $\gamma \in \Gamma$ on a large part of $\proj(\R^n)$. This allows us to cover $\xi(\partial_\infty \Gamma)$ by balls with radii controlled by $e^{\mu_2(\gamma)-\mu_1(\gamma)}$, giving an upper bound on its Hausdorff dimension.

\subsection*{Acknowledgements}
While writing this paper, we have been informed that Beatrice Pozzetti, Andres Sambarino and Anna Wienhard were working on similar results. We thank them for sharing their work in progress.

Nicolas Tholozan's research is partially supported by the Agence Nationale de la Recherche through the grant DynGeo
(ANR-11-BS01-013)

Olivier Glorieux acknowledges support from the European Research Council (ERC) under the European Union’s Horizon 2020 research and innovation programme (ERC starting grant DiGGeS, grant agreement No 715982).

\section{Background} \label{sec - Background}

In this section, we introduce some background to our work. We start by stating a few basic properties of Hausdorff dimension (Section \ref{subsec - hausdorff dimension}). In Section \ref{subsec - lie theory} we recall some fundamentals of Lie theory. In Section \ref{subsubsec-Anosov representation}, we introduce Anosov subgroups of semisimple Lie groups and explain how to embed then as projective Anosov subgroups of $\SL(n,\R)$. In Section \ref{subsec - Critical exponents symmetric space} we introduce their various critical exponents and entropies, and prove some comparison results between them. Finally, in Section \ref{subsec - projectively cc representations}, we introduce \emph{strongly projectively convex-cocompact groups}, following \cite{DGK}.

\subsection{Hausdorff dimension} \label{subsec - hausdorff dimension}

Let $(X,d)$ be a metric space. For $s>0$, the  Hausdorff measure of $X$ of dimension $s$ is defined by 
$$H^s(X,d) = \lim_{\epsilon\tv 0} H^s_\epsilon(X,d)~,$$
where
$$H^s_\epsilon(X,d) = \inf  \{ \sum_i r_i^s  \, | \, X \subset \bigcup_{i\in I} B(x_i, r_i), r_i \leq \epsilon\}~.$$
Here, the infimum is taken over all countable covers of $X$ by balls of radius less than $\epsilon$. 
One can show that there exists a critical parameter $s_0>0$ such that $H^s(X,d) = +\infty$ for all $s<s_0$ and $H^s(X,d) = 0$ for all $s>s_0$. This number $s_0$ is called the Hausdorff dimension of $(X,d)$ and is denoted by $\DimH(X,d)$.

In this paper, we will compare the Hausdorff dimension of different metrics on a compact set. The classical following proposition summarizes the comparison properties that we will need. 
\begin{prop} \label{p:GeneralComparisonHDim}
Let $d$ and $d'$ be two distances on a space $X$. If there exists $C$ and $\alpha>0$ such that
\[d' \leq C d^{\alpha}~,\]
then
\[\DimH(X,d') \leq \frac{1}{\alpha} \DimH(X,d)~.\]
\end{prop}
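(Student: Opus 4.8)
The plan is to prove the lower bound by transferring covers of $X$ from the metric $d'$ to the metric $d$ and comparing the associated Hausdorff pre-measures. Transferring covers in this direction (from $d'$ to $d$) is what produces a \emph{lower} bound on $\DimH(X,d')$; transferring them the other way would only bound it from above. Throughout I use the characterization, implicit in the description of Hausdorff measure above, that $\DimH(X,d') = \sup\{s>0 : H^s_{d'}(X) = +\infty\}$.

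The whole content of the statement is concentrated in one comparison of balls: I would show that the relation between $d$ and $d'$ gives an inclusion of the form
\[ B_{d'}(x,\rho) \subseteq B_d\bigl(x,\,C\rho^{\alpha}\bigr) \qquad \text{for all } x\in X \text{ and } \rho>0, \]
that is, a $d'$-ball of radius $\rho$ is contained in a $d$-ball whose radius is a fixed power $C\rho^{\alpha}$ of $\rho$. Equivalently, this says that the identity map $(X,d')\to(X,d)$ is $\alpha$-Hölder. This inclusion is the only analytic input, and it is exactly the place where the comparison between the two metrics is used; it is also the reason covers behave well under the transfer.

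Granting the inclusion, fix $s>0$ and let $\{B_{d'}(x_i,\rho_i)\}_i$ be an arbitrary countable cover of $X$ by $d'$-balls with all $\rho_i\le\epsilon$. By the inclusion, $\{B_d(x_i,C\rho_i^{\alpha})\}_i$ is a cover of $X$ by $d$-balls of radii $r_i=C\rho_i^{\alpha}\le C\epsilon^{\alpha}$, and
\[ \sum_i r_i^{\,s/\alpha} = \sum_i \bigl(C\rho_i^{\alpha}\bigr)^{s/\alpha} = C^{\,s/\alpha}\sum_i \rho_i^{\,s}. \]
Since this is a genuine cover of $X$ by $d$-balls of radius at most $C\epsilon^{\alpha}$, the left-hand sum is at least the $d$-pre-measure of $X$ at exponent $s/\alpha$ and scale $C\epsilon^{\alpha}$, so that $\sum_i\rho_i^{\,s}$ is bounded below by $C^{-s/\alpha}$ times that pre-measure. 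Taking the infimum over all such $d'$-covers and then letting $\epsilon\to0$ gives
\[ H^s_{d'}(X) \ \ge\ C^{-s/\alpha}\, H^{s/\alpha}_d(X). \]

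Feeding this into the dimension characterization finishes the proof. If $s<\alpha\,\DimH(X,d)$, then $s/\alpha<\DimH(X,d)$, so $H^{s/\alpha}_d(X)=+\infty$, and the displayed inequality forces $H^s_{d'}(X)=+\infty$, hence $\DimH(X,d')\ge s$. Letting $s$ increase to $\alpha\,\DimH(X,d)$ yields $\DimH(X,d')\ge\alpha\,\DimH(X,d)$. The main obstacle is therefore entirely the ball-inclusion step: one must bound the $d$-radius of a $d'$-ball by a power of its $d'$-radius, and it is precisely this control — rather than the reverse comparison — that delivers the asserted lower bound on the dimension.
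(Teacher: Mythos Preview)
Your proof breaks at the ball-inclusion step, and in fact the statement as written in the paper is incorrect (the paper only says ``it easily follows from the definition'' and gives no argument).

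You claim that $d'\le C d^\alpha$ yields $B_{d'}(x,\rho)\subseteq B_d(x,C\rho^\alpha)$, i.e.\ that the identity $(X,d')\to(X,d)$ is $\alpha$-H\"older. This is backwards. The hypothesis $d'(x,y)\le C\,d(x,y)^\alpha$ says precisely that the identity $(X,d)\to(X,d')$ is $\alpha$-H\"older; it gives the inclusion $B_d(x,r)\subseteq B_{d'}(x,Cr^\alpha)$, not the one you wrote. Indeed, from $d'(x,y)<\rho$ and $d'(x,y)\le C d(x,y)^\alpha$ you only get a \emph{lower} bound $d(x,y)\ge (d'(x,y)/C)^{1/\alpha}$, which is useless for your purpose. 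So the rest of your computation, while formally correct, is built on an inclusion that does not hold.

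Running the argument with the correct inclusion gives the opposite inequality
\[
\DimH(X,d')\ \le\ \tfrac{1}{\alpha}\,\DimH(X,d),
\]
and this is the version actually used later in the paper: in the proof of the lower bound they deduce from $d_x\le \tfrac12(d+d^*)$ that $\DimH(\Lambda_\G,d_x)\le \DimH(\Lambda_\G^{\mathrm{sym}})$, which is this direction with $\alpha=1$. The inequality in the proposition as stated is in fact false: take $X=[0,1]$, $d(x,y)=|x-y|^{1/2}$ and $d'(x,y)=|x-y|$; then $d'\le d$ on $X$, but $\DimH(X,d')=1$ while $\DimH(X,d)=2$, contradicting $\DimH(X,d')\ge \DimH(X,d)$.
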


In particular, if $d$ and $d'$ are bi-Lipschitz, then \[\DimH(X,d) = \DimH(X,d')~.\]
Assume now that $X$ is a compact subset of a smooth manifold $M$. Any two Riemannian metrics on $M$ are bi-Lipschitz equivalent in a neighbourhood of $X$. Hence the Hausdorff dimension of $X$ with the induced distance is independent of the choice of such a metric. We denote this Hausdorff dimension by $\DimH(X)$ and we have: 
\[\DimH(X)= \DimH(X,d)\]
where $d$ is the distance induced by any Riemannian metric on $M$.

\subsection{Cartan and Jordan projections} \label{subsec - lie theory}

\subsubsection{Cartan subspaces and restricted roots}
We present in this subsection the basic structure theory of semi-simple real Lie groups. A detailed exposition of this theory can be found in \cite{eberlein1996geometry}.

Let $G$ be a real semisimple Lie group with finite center, $K$ a maximal compact subgroup of $G$ and $X=G/K$ the symmetric space of $G$. We denote by $\fg$ the Lie algebra of $G$ and by $\fk\subset \fg$ the Lie algebra of $K$. Let $\fp$ denote the orthogonal of $\fk$ with respect to the Killing form of $\fg$. A \emph{Cartan subspace} $\fa$ is a maximal Abelian subalgebra of $\fp$.

A \emph{restricted root} is a non-zero linear form $\alpha$ on $\fa$ for which there exists $u\in \fg \backslash \{0\}$ such that
\[\ad_a(u) = \alpha(a) u\]
for all $a\in \fa$. We will denote by $\Delta$ the set of restricted roots. From now on,  restricted roots will be called roots for simplicity.

The \emph{Weyl group} $W(\fa)$ is the finite group $N(\fa)/Z(\fa)$, where $N(\fa)$ and $Z(\fa)$ denote respectively the normalizer and the centralizer of $\fa$ in $K$. The kernels of the restricted roots cut $\fa$ into fundamental domains for the action of $W(\fa)$. Choosing a connected component of $\fa \backslash \bigcup_{\alpha \in \Delta} \ker \alpha$, we define the set of \emph{positive roots} $\Delta^+$ as the roots that are positive on this connected component, and the \emph{Weyl chamber} as the closure of this connected component, i.e.
\[\fa_+ = \{b\in \fa \mid \alpha(b) \geq 0 \textrm{ for all $\alpha\in \Delta^+$}\}~.\]
The Weyl chamber is a convex cone. We denote by $\fa_+^*$ the dual convex cone, i.e.
\[\fa_+^* = \{\alpha \in \fa^* \mid \alpha_{\vert \fa_+} \geq 0\}~.\]

With those choices, the \emph{simple roots} are the positive roots that are not a positive linear combination of other positive roots. They form a basis of $\fa^*$. We denote by $\Delta_s$ the set of simple roots. 

Finally there is a unique element $w\in W(\fa)$ such that $-w$ preserves $\fa^+$. The transformation $-w$ is an involution called the \emph{opposition involution} and denoted by $\op$. The opposition involution preserves $\Delta_s$.

\paragraph{\textit{Main example}} The main example we will be interested in here is when $G$ is the group $\SL(n,\R)$. A canonical choice for a maximal compact subgroup $K$ is the subgroup $\SO(n,\R)$ of orthogonal matrices. The symmetric space $X_n = \SL(n,\R)/\SO(n)$  can be identified with the space of scalar products on $\R^n$ up to scaling, with the standard scalar product as base point $o$. 

The Lie algebra $\fk$ is the space of anti-symmetric matrices and its orthogonal $\fp \subset \mathfrak{sl}(n,\R)$ is the space of symmetric matrices of trace $0$. A canonical choice of Cartan subspace is $\fa= \{\textrm{Diagonal matrices of trace $0$}\}= \{ \diag(\lambda_1, ... , \lambda_n), \sum_i \lambda_i=0\} $. The Weyl group is the symmetric group $\mathfrak{S}_n$ acting by permuting the eigenvalues. Denote by $\epsilon_i\in \fa^*$ the linear form on $\fa$ corresponding to the $i$-th eigenvalue. The restricted roots are the $\alpha_{i,j}=\epsilon_i - \epsilon_j$, for $1\leq i, j \leq n$.  A canonical choice of Weyl chamber is $\fa^+ =\{\textrm{Diagonal matrices with ordered eigenvalues}\} =\{ \diag(\lambda_1, ... , \lambda_n), \, \lambda_1 \geq \lambda_2\geq \ldots \geq \lambda_n, \,  \sum_i \lambda_i=0\}$, with associated set of positive restricted roots $\{\alpha_{i,j}, 1\leq i < j \leq n\}$. The simple roots are the roots $\alpha_{i,i+1}$, $1\leq i \leq n-1$. Finally, the opposition involution $\op$ maps $\diag(\lambda_1,\ldots, \lambda_n)$ to $\diag(-\lambda_n,\ldots, -\lambda_1)$.

\subsubsection{Cartan projections}

From now on, we always assume a fixed choice of 
\begin{itemize}
\item a maximal compact subgroup $K$,
\item a Cartan subalgebra $\fa \subset \fp$,
\item a Weyl chamber $\fa_+ \subset \fa$, with associated positive roots $\Delta^+$ and simple roots $\Delta_s$.
\end{itemize}
Note that the choice of a maximal compact subgroup $K$ corresponds to the choice of a base point $o = \mathrm{Fix}(K)$ in the symmetric space $X$.

\begin{theo}[Cartan decomposition]
For every $g\in G$, there is a unique vector $\mu(g) \in \fa_+$ such that
\[g = k \exp(\mu(g)) k'\]
for some $k,k'\in K$. The map $\mu: G \to \fa_+$ is called the \emph{Cartan projection}.
\end{theo}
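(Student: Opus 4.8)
The plan is to derive the statement from two classical structural inputs: the \emph{global Cartan decomposition} $G = K\exp(\fp)$ (the map $K\times\fp\to G$, $(k,Y)\mapsto k\exp(Y)$, being a diffeomorphism), and the fact that the closed Weyl chamber $\fa_+$ is a \emph{strict fundamental domain} for the adjoint action of $K$ on $\fp$, i.e.\ every element of $\fp$ is conjugate under $\mathrm{Ad}(K)$ to a unique element of $\fa_+$. For $G=\SL(n,\R)$ both inputs are completely concrete: the first is the polar decomposition $g=k\exp(Y)$ of a matrix (with $k$ orthogonal and $Y$ symmetric of trace $0$, so $\exp(Y)$ symmetric positive definite of determinant $1$), and the second is the spectral theorem together with the decreasing-eigenvalue convention that cuts out $\fa_+$.

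For \emph{existence}, I would first write $g = k_1 \exp(Y)$ with $Y\in\fp$ using the global Cartan decomposition. Diagonalizing $Y$ inside $\fp$ produces $k_2\in K$ and $H\in\fa_+$ with $Y = \mathrm{Ad}(k_2)\,H$. Since conjugation intertwines with $\exp$, namely $\exp(\mathrm{Ad}(k_2)H) = k_2\exp(H)k_2^{-1}$, one gets $g = (k_1k_2)\exp(H)(k_2^{-1})$, which is the desired decomposition with $\mu(g)=H$.

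For \emph{uniqueness}, I would exploit the global Cartan involution $\Theta$ of $G$, the involutive automorphism whose differential is the Cartan involution $\theta$ ($\theta=+\mathrm{id}$ on $\fk$, $\theta=-\mathrm{id}$ on $\fp$); by construction $\Theta$ fixes $K$ pointwise and satisfies $\Theta(\exp H)=\exp(-H)$ for $H\in\fp$. If $g=k\exp(H)k'$ with $k,k'\in K$ and $H\in\fa_+$, then $\Theta(g)=k\exp(-H)k'$, whence
\[ g\,\Theta(g)^{-1} = k\exp(2H)k^{-1}. \]
In the matrix model $\Theta(g)=(g^{-1})^T$, so the left-hand side is $gg^T$ and depends only on $g$; thus $\exp(2H)$, equivalently $2H\in\fp$, is determined up to $\mathrm{Ad}(K)$-conjugacy, and the uniqueness of the $\fa_+$-representative forces $H$ to be unique. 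As a bonus this identifies $\exp(\mu_i(g))$ with the ordered eigenvalues of $\sqrt{gg^T}$, matching the definition of $\mu_i$ used earlier in the paper.

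The genuine work, and the only real obstacle, is establishing that $\fa_+$ is a strict fundamental domain for $\mathrm{Ad}(K)$ on $\fp$, which I would split into three steps treated in order. First, any two maximal abelian subalgebras of $\fp$ are $\mathrm{Ad}(K)$-conjugate (a compactness/maximization argument using that the Killing form is positive definite on $\fp$), so every $Y\in\fp$ is $\mathrm{Ad}(K)$-conjugate into $\fa$. Second, two elements of $\fa$ that are $\mathrm{Ad}(K)$-conjugate lie in the same $W(\fa)$-orbit: given $H_2=\mathrm{Ad}(k)H_1\in\fa$, both $\fa$ and $\mathrm{Ad}(k)\fa$ are maximal abelian in the centralizer of $H_2$, so a further conjugation fixing $H_2$ returns $\mathrm{Ad}(k)\fa$ to $\fa$ and realizes the identification by an element of $N(\fa)$, i.e.\ by the Weyl group. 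Third, $\fa_+$ meets each $W(\fa)$-orbit exactly once, which is the fundamental-domain property of a chamber for a finite reflection group already recorded in the discussion of Weyl chambers above. I would refer to \cite{eberlein1996geometry} for the finite-reflection-group input of the third step.
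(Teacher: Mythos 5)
Your proof is correct, and it is the standard classical argument: the paper itself does not prove this statement but records it as background, deferring to the structure theory expounded in \cite{eberlein1996geometry} (and \cite{helgason1978differential}), where precisely your route --- global decomposition $G=K\exp(\fp)$, $\mathrm{Ad}(K)$-conjugacy of maximal abelian subspaces of $\fp$, reduction of $\mathrm{Ad}(K)$-conjugacy in $\fa$ to the Weyl group, and the strict fundamental domain property of the closed chamber --- is carried out. Your uniqueness argument via $g\,\Theta(g)^{-1}=k\exp(2H)k^{-1}$ (i.e.\ $gg^T$ in the matrix model) is also the standard one and correctly identifies $\mu(g)$ with the logarithms of the eigenvalues of $\sqrt{gg^T}$, as used throughout the paper.
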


\begin{rmk}
The Cartan projections of $g$ and its inverse are related by the following formula:
\[\mu(g^{-1}) = \op(\mu(g)).\]
This relation characterizes the opposition involution.
\end{rmk}

The Cartan projection allows to define a ``vector valued distance'' on the symmetric space $X$. If $x$ and $y$ are two points in $X$, we define
\[\mu(x,y) = \mu(g^{-1}h),\]
where $g$ and $h$ are elements of $G$ such that $g\cdot o = x$ and $h\cdot o = y$. This is a vector valued distance in the following sense: if $\norm{\cdot}$ is a $W(\fa)$-invariant norm on $\fa$, then
\[(x,y) \mapsto \norm{\mu(x,y)}\]
is a $G$-invariant Finsler distance on $X$. In particular, if $\norm{\cdot}_{\textit{eucl}}$ is the Euclidean norm on $\fa$ given by the Killing form, then
\[\norm{\mu(x,y)}_{\textit{eucl}} = d_R(x,y),\]
where $d_R$ is the symmetric Riemannian distance of $X$.

Benoist showed  that the Cartan projection satisfies a generalized triangle inequality: 
\begin{prop}\cite{benoist1997proprietes}\label{prop - Benoist cartan projection continuous}
For every compact subset $L$ of $G$, there is a constant $C>0$ (depending on $L$) such that
\[\norm{\mu(lgl') - \mu(g)}_{\textit{eucl}} \leq C\]
for all $g\in G$ and all $l,l'\in L$.
\end{prop}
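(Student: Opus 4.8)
The plan is to reduce the two--sided estimate to a single one--sided bound, and then isolate the one genuinely analytic ingredient. First I would prove the right translation bound
\begin{equation*}
\norm{\mu(gh) - \mu(g)}_{\textit{eucl}} \leq \norm{\mu(h)}_{\textit{eucl}} \qquad \text{for all } g,h\in G. \tag{$\star$}
\end{equation*}
Granting $(\star)$, the left translation bound follows formally from the opposition involution: since $\mu(g^{-1}) = i(\mu(g))$ and $i$ is a linear isometry of $(\fa,\norm{\cdot}_{\textit{eucl}})$ (it preserves the Killing form and permutes Weyl chambers), applying $(\star)$ to $g^{-1}$ and $l^{-1}$ gives $\norm{\mu(lg)-\mu(g)}_{\textit{eucl}} = \norm{\mu(g^{-1}l^{-1})-\mu(g^{-1})}_{\textit{eucl}} \leq \norm{\mu(l^{-1})}_{\textit{eucl}} = \norm{\mu(l)}_{\textit{eucl}}$. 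A triangle inequality in $\fa$ then yields
\[
\norm{\mu(lgl') - \mu(g)}_{\textit{eucl}} \leq \norm{\mu(lgl') - \mu(gl')}_{\textit{eucl}} + \norm{\mu(gl') - \mu(g)}_{\textit{eucl}} \leq \norm{\mu(l)}_{\textit{eucl}} + \norm{\mu(l')}_{\textit{eucl}},
\]
and since $l,l'$ range over the compact set $L$, the right-hand side is bounded by $C := 2\sup_{l\in L}\norm{\mu(l)}_{\textit{eucl}} < \infty$.

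To prove $(\star)$ I would pass to the symmetric space $X = G/K$. Recall from the excerpt that $\norm{\mu(x,y)}_{\textit{eucl}} = d_R(x,y)$ and that, taking the base point $o = \mathrm{Fix}(K)$, one has $\mu(h) = \mu(o,h\cdot o)$. The key geometric lemma is that the $\fa$-valued distance is $1$-Lipschitz in each variable:
\[
\norm{\mu(o,p) - \mu(o,q)}_{\textit{eucl}} \leq d_R(p,q) \qquad \text{for all } p,q\in X.
\]
Applying this with $p = gh\cdot o$ and $q = g\cdot o$, and using that $g$ acts by isometries of $d_R$ so that $d_R(gh\cdot o, g\cdot o) = d_R(h\cdot o, o) = \norm{\mu(h)}_{\textit{eucl}}$, yields $(\star)$ immediately.

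The main obstacle is therefore the Lipschitz lemma, the only non-formal ingredient. I would establish it by reducing, through the highest weight functionals, to the submultiplicativity of operator norms. For the standard example $G = \SL(n,\R)$ the coordinates of $\mu(g)$ are the logarithms of the singular values $\log\sigma_i(g)$, and their partial sums are $\langle \varpi_k, \mu(g)\rangle = \log\norm{\Lambda^k g}_{\mathrm{op}}$ for the fundamental weights $\varpi_k$; the bounds $\norm{\Lambda^k g}_{\mathrm{op}}/\norm{\Lambda^k h^{-1}}_{\mathrm{op}} \leq \norm{\Lambda^k(gh)}_{\mathrm{op}} \leq \norm{\Lambda^k g}_{\mathrm{op}}\,\norm{\Lambda^k h}_{\mathrm{op}}$ give $|\langle \varpi_k,\mu(gh)-\mu(g)\rangle| \leq \max(\log\norm{\Lambda^k h}_{\mathrm{op}},\log\norm{\Lambda^k h^{-1}}_{\mathrm{op}})$, and since the $\varpi_k$ form a basis of $\fa^*$ this controls $\norm{\mu(gh)-\mu(g)}_{\textit{eucl}}$ with a constant independent of $g$. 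The general semisimple case then follows by applying the same argument to a faithful linear representation $G \hookrightarrow \SL(N,\R)$, or alternatively by a direct convexity argument in the \textrm{CAT}$(0)$ space $X$; the delicate point to verify carefully is that the highest weight functionals genuinely compute the relevant operator norms and jointly bound the Euclidean norm on $\fa$ uniformly in $g$.
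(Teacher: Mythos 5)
The paper itself gives no proof of this proposition --- it is quoted directly from Benoist \cite{benoist1997proprietes} --- so there is no internal argument to compare against; judged on its own merits, your proof is essentially correct, and it follows what is in fact the standard route (close to the cited source and to Gu\'eritaud--Guichard--Kassel--Wienhard): control $\mu$ through the quantities $\log\norm{\rho(g)}_{\mathrm{op}}$ for enough representations $\rho$ that the associated weights span $\fa^*$, and use submultiplicativity of operator norms. Two points need repair. First, you state $(\star)$ with constant $1$. That sharp statement is true, but it is a genuinely stronger fact (it requires a convexity argument in the $\mathrm{CAT}(0)$ space $X$, as in Parreau or Kapovich--Leeb--Porti) and it does \emph{not} follow from your exterior-power computation: passing from the coordinates $\langle \varpi_k, \cdot\rangle$ back to the Euclidean norm on $\fa$ costs a basis-comparison constant, so what you actually prove is $\norm{\mu(gh)-\mu(g)}_{\textit{eucl}} \leq C_G \norm{\mu(h)}_{\textit{eucl}}$ with $C_G$ depending only on the group. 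This weaker bound is all the proposition needs: your formal reductions (opposition involution for left multiplication, triangle inequality, compactness of $L$ together with continuity of $\mu$) go through verbatim with $C_G$ carried along. You should simply state $(\star)$ in that weaker form, or else your proof of the ``Lipschitz lemma'' does not deliver the lemma as you stated it.

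Second, your treatment of the general semisimple case via ``a faithful linear representation $G \hookrightarrow \SL(N,\R)$'' has a small gap: a semisimple Lie group with finite center need not admit any faithful linear representation (the metaplectic group is the standard example). This is easily fixed, because $\mu$ is insensitive to the center: the finite center lies in $K$, so $\mu$ factors through $\mathrm{Ad}(G)$, and one can run your argument for $\mathrm{Ad}: G \to \SL(\fg)$, whose nonzero restricted weights are exactly the restricted roots and hence span $\fa^*$. Alternatively, use the fundamental representations $\rho_\theta$ furnished by the paper's own lemma, for which $k_\theta w_\theta(\mu(g)) = \mu_1(\rho_\theta(g)) = \log\norm{\rho_\theta(g)}_{\mathrm{op}}$ and the $w_\theta$ form a basis of $\fa^*$; this sidesteps the delicate point you flag at the end, since the identification of highest weight functionals with operator norms is exactly the content of that lemma. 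With these two repairs your proof is complete.
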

In particular, given two points $x$ and $y\in X$, there is a constant $C$ (depending on $x$ and $y$) such that
\[\norm{\mu(x,z) - \mu(y,z)}_{\textit{eucl}} \leq C\]
for all $z\in X$.

\subsubsection{Jordan projections}
For a restricted root $\alpha\in \Delta$  we denote by $\mathfrak{g}_\alpha := \{ u \in \mathfrak{g}, \ad_a(u) = \alpha(a) u , \, \forall a \in  \fa \} $ the corresponding eigenspace. We denote by $A^+ := \exp(\fa^+)$ and $N=\exp(\oplus_{\alpha \in \Delta^+} \mathfrak{g}_\alpha) $. 

An element of $G$ is called \emph{elliptic} (resp. \emph{hyperbolic}, \emph{unipotent}) if it is conjugated to an element of $K$ (resp. $A^+$, $N$).

\begin{theorem}[Jordan decomposition]\cite[ Theorem 2.19.24]{helgason1978differential}
For all $g\in G$, there is a unique triple $(g_e, g_h, g_p)$ of commuting elements, such that  $g_e$ is elliptic, $g_h$ hyperbolic and $g_p$ unipotent, that satisfies: $g=g_eg_hg_p$. 
\end{theorem}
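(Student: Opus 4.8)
The plan is to deduce this real multiplicative Jordan decomposition from the classical linear-algebra Jordan decomposition, transported to $G$ by way of a faithful representation together with an algebraicity argument. First I would fix a faithful finite-dimensional representation $G \hookrightarrow \mathrm{GL}(V)$: since $G$ is semisimple with finite center, the adjoint representation $\mathrm{Ad}: G \to \mathrm{GL}(\fg)$ has finite kernel, and $\mathrm{Ad}(G)$ is the identity component of the real points of the algebraic group $\mathrm{Aut}(\fg)$. For $g \in G$ I would then view $\mathrm{Ad}(g) \in \mathrm{GL}(\fg)$ and apply the complex multiplicative Jordan decomposition: every invertible endomorphism factors uniquely as a product of a commuting semisimple (diagonalizable over $\C$) element and a unipotent element.

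The next step is to refine the semisimple factor using the real structure. A real semisimple automorphism $s$ splits uniquely as a product $s = s_e s_h$ of commuting factors, where $s_h$ is diagonalizable over $\R$ with positive eigenvalues (``hyperbolic'') and $s_e$ has all eigenvalues of modulus one (``elliptic''); these are obtained by taking moduli and arguments of the eigenvalues of $s$, and since the non-real eigenvalues come in complex-conjugate pairs, both $s_e$ and $s_h$ are real. Applied to the semisimple part of $\mathrm{Ad}(g)$, this produces three commuting factors $g_e, g_h, g_p$ in $\mathrm{GL}(\fg)$ whose product is $\mathrm{Ad}(g)$.

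The crux is to show that these factors genuinely come from $G$ and match the definitions of elliptic, hyperbolic and unipotent given in the paper. For membership I would invoke that the Jordan components of an element of a real algebraic group again lie in that group: since $\mathrm{Ad}(G)$ is algebraic, we get $g_e, g_h, g_p \in \mathrm{Ad}(G)$, after which one lifts back to $G$ (the factors lie in the Zariski closure of the cyclic group generated by $g$ and hence commute with it, and the finite central ambiguity is controlled by the uniqueness of the decomposition). It then remains to check the conjugacy conditions. The hyperbolic factor generates an $\R$-split one-parameter subgroup, so it lies in a maximal $\R$-split torus $\exp(\fa)$ and, after a Weyl-group reordering of eigenvalues, in $\exp(\fa^+) = A^+$. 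The elliptic factor generates a relatively compact subgroup, hence lies in a maximal compact subgroup, all of which are conjugate to $K$. The unipotent factor is conjugate into $N$ by the standard theory of unipotent subgroups.

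Finally, uniqueness of the triple follows from uniqueness at each stage: the complex Jordan decomposition is unique, and the modulus/argument splitting of the semisimple part is unique as well. I expect the main obstacle to be the algebraicity step, namely rigorously justifying that $\mathrm{Ad}(G)$ is a real algebraic group whose elements have their Jordan components inside it, and that these components can be pulled back to $G$ despite the finite center, together with verifying that the real semisimple part indeed splits into commuting elliptic and hyperbolic elements that individually lie in $G$.
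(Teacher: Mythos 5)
The paper itself gives no proof of this statement: it is quoted from the literature (the citation points to Helgason, though the numbering 2.19.24 is actually Eberlein's), so there is nothing internal to compare against; your argument is in fact the standard proof from that cited literature, and its overall structure is sound. The one step you leave genuinely schematic is the return from $\mathrm{Ad}(G)$ to $G$: saying that ``the finite central ambiguity is controlled by the uniqueness of the decomposition'' is circular as written, because uniqueness of the Jordan factors holds in $\mathrm{GL}(\fg)$ and says nothing about the kernel of $\mathrm{Ad}$. The standard repair is to lift concretely: the hyperbolic and unipotent parts of $\mathrm{Ad}(g)$ are $e^{\ad X}$ and $e^{\ad Y}$ with $X$ real-semisimple and $Y$ nilpotent (every derivation of $\fg$ is inner), so set $g_h=\exp_G(X)$, $g_p=\exp_G(Y)$ and $g_e=g\,g_p^{-1}g_h^{-1}$. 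Commutativity of these elements of $G$ is not automatic from commutativity of their images, but follows because a commutator such as $\exp_G(tX)\,g\,\exp_G(-tX)\,g^{-1}$ lies in the discrete center $Z(G)$ and varies continuously with $t$, hence is trivial; $g_e$ is elliptic because $\mathrm{Ad}$ has finite kernel and closed image, hence is proper, so $\overline{\langle g_e\rangle}$ is compact; and uniqueness in $G$ (not just in $\mathrm{Ad}(G)$) follows since two candidate hyperbolic (resp.\ unipotent) factors differing by a central element are exponentials of semisimple (resp.\ nilpotent) elements with the same image under $\ad$, hence are equal, which then forces the elliptic factors to agree as well. With that step made precise, your proof is correct and coincides with the argument of the cited source.
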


\begin{defi}
The Jordan projection of $g$ is the element $\lambda(g)\in \fa^+$ such that $g_h$ is conjugated to $\exp(\lambda(g))$. 
\end{defi}
While the Cartan projection depends on the choice of a base point in~$X$, the Jordan projection is a conjugacy invariant. One has the following alternative definition of $\lambda(g)$:

\begin{prop}
For every $g\in G$,
\[\lambda(g) = \lim_{n\to +\infty} \frac{1}{n}\mu(g^n)\]
\end{prop}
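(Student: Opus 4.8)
The plan is to test the claimed identity against a basis of $\fa^*$ given by highest weights of finite-dimensional representations, turning the vector-valued statement into finitely many scalar statements, each of which reduces to Gelfand's spectral radius formula. Concretely, I would first record two dictionary identities relating the two projections to linear-algebraic data of a representation $\rho\colon G\to \mathrm{GL}(V)$ with highest weight $\chi$. Choosing an inner product on $V$ compatibly with the Cartan involution defining $K$ (so that $\rho(K)$ acts by isometries and $\rho(\exp a)$ is self-adjoint and positive for $a\in\fa$, with eigenvalues $e^{\psi(a)}$ over the weights $\psi$ of $\rho$), one gets
\[\chi(\mu(h)) = \log \norm{\rho(h)}\qquad\text{and}\qquad \chi(\lambda(g)) = \log r(\rho(g)),\]
where $\norm{\cdot}$ is the operator norm and $r(\cdot)$ the spectral radius. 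The first identity follows by writing $h = k\exp(\mu(h))k'$ and using isometry-invariance of the operator norm together with the fact that $\chi$ dominates every other weight on $\fa^+$, so that the top eigenvalue of the positive operator $\rho(\exp\mu(h))$ is $e^{\chi(\mu(h))}$. The second follows from the Jordan decomposition: $\rho(g)=\rho(g_e)\rho(g_h)\rho(g_p)$ is a product of commuting operators that are respectively of modulus-one spectrum, conjugate to $\rho(\exp\lambda(g))$, and unipotent; simultaneous triangularization shows the eigenvalue moduli of $\rho(g)$ are exactly the $e^{\psi(\lambda(g))}$, whose maximum is $e^{\chi(\lambda(g))}$.

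With these in hand, I would choose irreducible representations $\rho_1,\dots,\rho_r$ of $G$ (after passing to a finite cover if necessary, which does not affect the statement since it changes neither $\fa$ nor the two projections) whose highest weights $\chi_1,\dots,\chi_r$ form a basis of $\fa^*$. For each $i$, the first identity applied to $h=g^n$ gives
\[\chi_i(\mu(g^n)) = \log\norm{\rho_i(g)^n},\]
and Gelfand's formula $\tfrac1n\log\norm{A^n}\to \log r(A)$, combined with the second identity, yields
\[\lim_{n\to+\infty}\frac1n\,\chi_i(\mu(g^n)) = \log r(\rho_i(g)) = \chi_i(\lambda(g)).\]
Since the $\chi_i$ form a basis of $\fa^*$, convergence of every coordinate forces $\frac1n\mu(g^n)\to\lambda(g)$ in $\fa$, which is the assertion; note that Gelfand's theorem produces a genuine limit, so in particular the limit in the statement exists. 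In the model case $G=\SL(n,\R)$ this is just the statement, for each $k$, that $\tfrac1n\log\norm{\Lambda^k(g^n)}$ converges to the log of the product of the $k$ largest eigenvalue moduli of $g$, the functionals $\chi_k=\epsilon_1+\dots+\epsilon_k$ being a basis of $\fa^*$.

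The genuinely substantive input is Gelfand's spectral radius formula, which carries all of the analysis; everything else is representation-theoretic bookkeeping ensuring that the singular values of $\rho_i(g^n)$ are governed by the highest weight while their growth rate is governed by the spectral radius. The point requiring the most care is the first dictionary identity $\chi(\mu(h))=\log\norm{\rho(h)}$: it relies on choosing the inner product so that the Cartan decomposition of $h$ matches the singular value decomposition of $\rho(h)$ and the highest weight really does realize the operator norm. I would regard verifying this compatibility, together with the existence of enough genuine representations to span $\fa^*$, as the main (though routine) obstacles.
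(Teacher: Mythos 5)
Your proof is correct. Note that the paper does not prove this proposition at all --- it is quoted as classical background --- so there is no argument in the text to compare against; yours is the standard proof of this fact. Both of your dictionary identities hold as stated: with a $K$-compatible inner product, the top singular value of $\rho(h)$ is $e^{\chi(\mu(h))}$ because the highest restricted weight dominates every other restricted weight on $\fa_+$ (their differences are non-negative combinations of simple roots, which are non-negative on the Weyl chamber), and simultaneous triangularization of the commuting Jordan triple over $\C$ gives spectral radius $e^{\chi(\lambda(g))}$, since the elliptic factor has modulus-one spectrum and the unipotent factor has spectrum $\{1\}$. Gelfand's formula then converts the first identity into the second along powers, and evaluating against a basis of $\fa^*$ upgrades the scalar limits to the vector-valued statement. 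One simplification worth pointing out: the paper's own lemma on fundamental representations already provides, for each simple root $\theta \in \Delta^s$, a representation $\rho_\theta$ with $\rho_\theta(K) \subset \SO(n_\theta)$ and $k_\theta w_\theta(\mu(g)) = \mu_1(\rho_\theta(g))$, and the remark following it asserts the same identity for Jordan projections, i.e. $k_\theta w_\theta(\lambda(g)) = \lambda_1(\rho_\theta(g))$. Since $\mu_1(\rho_\theta(g)) = \log \norm{\rho_\theta(g)}$ (operator norm for the standard Euclidean structure, as $\rho_\theta(K)\subset \SO(n_\theta)$), $\lambda_1(\rho_\theta(g))$ is the logarithm of the spectral radius of $\rho_\theta(g)$, and $\{k_\theta w_\theta\}_{\theta \in \Delta^s}$ is a basis of $\fa^*$, your two dictionary identities and the required basis come for free from that lemma, with no need to pass to a finite cover; Gelfand's formula is then the only analytic input.
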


\begin{rmk}
Similarly to the Cartan projection, we have the following relation:
\[\lambda(g^{-1}) = \op(\lambda(g))~.\]
\end{rmk}

\paragraph{\textit{Main example}}

The Cartan decomposition for $\SL(n,\R)$ is usually called the polar decomposition, and the Cartan projection associates to a matrix $g\in \SL(n,\R)$ the logarithm of the eigenvalues of $\sqrt{g^t g}$ in decreasing order. We will denote by $\mu_i(g) = \epsilon_i(\mu(g))$ the $i$-eigenvalue of the Cartan projection of~$g$.  

The decomposition $g = g_e g_h g_p$ in that case is sometimes called the Dunford decomposition. The Jordan projection associates to $g$ the logarithms of the moduli of the complex eigenvalues of $g$, in decreasing order. We will denote similarly by $\lambda_i(g)= \epsilon_i(\lambda(g))$ the $i$\textsuperscript{th} eigenvalue of the Jordan projection of $g$.

\subsection{Anosov groups}\label{subsubsec-Anosov representation}

Anosov subgroups of higher rank Lie groups have been introduced by Labourie \cite{labourie2006anosov} as a reasonable generalization of convex-cocompact subgroups in rank 1. The original definition for deformations of uniform lattices in rank 1 was extended by Guichard and Wienhard to Gromov hyperbolic groups. More recently, Gueritaud--Guichard--Kassel--Wienhard \cite{gueritaud20017anosov} and Kapovich--Leeb--Porti \cite{kapovich2017anosov} independently gave a characterization of Anosov subgroups in terms of their Cartan projections. While the first team assumes \emph{a priori} that the group is hyperbolic, the second team shows moreover that their condition implies Gromov hyperbolicity. Here, following \cite{Guichard}, we use their characterization as a definition.

Let $G$ be a semisimple Lie group. Fix a choice of $K$, $\fa$ and $\fa_+$ as before.  Let $\Theta$ be a non-empty subset of the set of simple roots $\Delta_s$.

\begin{defi}
A finitely generated group $\G \subset G$ is called \emph{$\Theta$-Anosov} if there exist constants $\alpha, A>0$ such that
\[\theta(\mu(\g)) \geq \alpha \vert \g \vert - A \]
for all $\g\in \G$ and all $\theta \in \Theta$.
(Here, $\vert g \vert$ denotes the word length of $g$ with respect to a finite generating set.)
\end{defi}
The definition implies in particular that $\G$ is discrete and quasi-isometrically embedded in $G$. One of the nice features of this definition is that it forces $\Gamma$ to have some ``negatively curved behaviour'':

\begin{theo}\cite[Theorem 6.15]{kapovich2018anosov}
Let $\G \subset G$ be a $\Theta$-Anosov subgroup, for some non-empty subset $\Theta$ of $\Delta_s$. Then $\G$ is Gromov hyperbolic.
\end{theo}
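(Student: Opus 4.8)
The plan is to exploit the orbit map $f\colon \G \to X$, $g \mapsto g\cdot o$, and to turn the stability of $\Theta$-regular quasi-geodesics in the symmetric space $X$ (the higher rank Morse Lemma of Kapovich--Leeb--Porti) into stability of \emph{all} quasi-geodesics of the Cayley graph of $\G$. Recall that a geodesic metric space is Gromov hyperbolic if and only if its quasi-geodesics with common endpoints are uniformly Hausdorff close (the stability, or Morse, property); so establishing this stability for the word metric of $\G$ yields the theorem.

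First I would check that $f$ is a quasi-isometric embedding. For the upper bound, each generator $s$ moves $o$ by $d_R(o,s\cdot o) = \norm{\mu(s)}_{\textit{eucl}}$, so the triangle inequality for $d_R$ gives $\norm{\mu(g)}_{\textit{eucl}} = d_R(o,g\cdot o) \leq L\vert g\vert$, with $L = \max_s \norm{\mu(s)}_{\textit{eucl}}$. For the lower bound, fix $\theta \in \Theta$; as $\theta$ is a fixed linear form there is $c>0$ with $\theta \leq c\norm{\cdot}_{\textit{eucl}}$ on $\fa_+$, whence the Anosov inequality yields $\norm{\mu(g)}_{\textit{eucl}} \geq \tfrac1c \theta(\mu(g)) \geq \tfrac{C}{c}\vert g\vert - \tfrac{C'}{c}$. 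Thus $\norm{\mu(g)}_{\textit{eucl}} \asymp \vert g\vert$ and $f$ is a quasi-isometric embedding.

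Second -- and this is where the definition is used crucially -- the Anosov inequality holds for \emph{every} element of $\G$, not merely along a preferred geodesic. Hence if $q\colon I \to \G$ is any $(\lambda,\e)$-quasi-geodesic of the Cayley graph, every sub-segment $q(i)^{-1}q(j)$ satisfies $\theta(\mu(q(i)^{-1}q(j))) \geq C\vert q(i)^{-1}q(j)\vert - C'$ for all $\theta\in\Theta$, and combining this with $d_R(f(q(i)),f(q(j))) = \norm{\mu(q(i)^{-1}q(j))}_{\textit{eucl}} \leq L\vert q(i)^{-1}q(j)\vert$ gives $\theta(\mu(q(i)^{-1}q(j))) \geq \tfrac{C}{L}\, d_R(f(q(i)),f(q(j))) - C'$. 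In other words $f\circ q$ is a \emph{uniformly $\Theta$-regular} quasi-geodesic of $X$, with regularity and quasi-geodesic constants depending only on $\G$, $G$ and $(\lambda,\e)$. Here one may invoke Benoist's inequality, Proposition \ref{prop - Benoist cartan projection continuous}, to relate the Cartan projection along the orbit to that of the endpoints and to pass from the discrete orbit to a genuine path in $X$.

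The main obstacle is the higher rank Morse Lemma itself: two uniformly $\Theta$-regular quasi-geodesics of $X$ with the same endpoints stay within a Hausdorff distance bounded in terms of their regularity and quasi-geodesic constants only. This replaces the classical Morse Lemma, which fails in $X$ because quasi-geodesics inside a flat are not stable; the regularity hypothesis is exactly what forbids such flat-like degeneracy and forces a rank-one mechanism along the quasi-geodesic. The proof, which I would take from Kapovich--Leeb--Porti, runs through the geometry of parallel sets and diamonds $\diamondsuit(x_-,x_+)$ attached to regular ideal endpoints, together with contraction estimates for the Cartan projection on the flag variety $G/P_\Theta$ that are available precisely because $\mu(q(i)^{-1}q(j))$ stays at linear depth inside $\fa_+$, away from the walls $\ker\theta$, $\theta\in\Theta$. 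Granting this, the conclusion is immediate: given two $(\lambda,\e)$-quasi-geodesics $q_1,q_2$ of $\G$ with the same endpoints, $f\circ q_1$ and $f\circ q_2$ are uniformly $\Theta$-regular quasi-geodesics of $X$ with common endpoints, hence uniformly Hausdorff close in $X$; pulling this back through the lower bound of the quasi-isometric embedding shows that $q_1$ and $q_2$ are uniformly Hausdorff close in $\G$. Therefore quasi-geodesics of the Cayley graph are uniformly stable, and $\G$ is Gromov hyperbolic.
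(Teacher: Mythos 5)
The paper does not prove this statement at all: it is quoted directly from Kapovich--Leeb--Porti \cite{kapovich2018anosov}, so your proposal can only be judged against the argument it claims to reconstruct, and there it has a genuine gap. The first two steps are fine: the orbit map is a quasi-isometric embedding, the image of any $(\lambda,\e)$-quasi-geodesic of the Cayley graph is a uniformly $\Theta$-regular uniform quasi-geodesic of $X$, and Bonk's criterion (stability of quasi-geodesics implies hyperbolicity of a geodesic space) is a legitimate tool. The fatal problem is the statement you attribute to the higher rank Morse lemma: it is \emph{false} that two uniformly $\Theta$-regular quasi-geodesics of $X$ with the same endpoints are uniformly Hausdorff close. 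Take a maximal flat $F\cong\R^r$, $r\geq 2$, a compact convex cone $C$ contained in the interior of the Weyl chamber, and two distinct unit vectors $u,v$ in the interior of $C$. The concatenation of the segment from $0$ to $su$ with the segment from $su$ to $su+tv$ is a uniformly regular quasi-geodesic (all difference vectors of pairs of its points lie in the convex cone $C$, and the quasi-geodesic constants depend only on $C$), and so is the straight segment from $0$ to $su+tv$; yet their Hausdorff distance grows linearly in $s+t$. Regularity does \emph{not} forbid wandering inside a flat, contrary to what you assert. What Kapovich--Leeb--Porti actually prove is that a uniformly regular quasi-geodesic is uniformly close to the \emph{diamond} $\diamondsuit(x_-,x_+)$ with tips near its endpoints --- a Finsler-convex set of dimension equal to the rank, whose width is comparable to the distance between its tips --- not to a geodesic, and not to any other quasi-geodesic sharing its endpoints.

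Because of this, your final step collapses: $f\circ q_1$ and $f\circ q_2$ are both close to the same diamond but possibly far from each other, so nothing can be pulled back through the quasi-isometric embedding and the stability criterion is never verified. Nor is this a fixable technicality: even Finsler-geodesic bigons in $X$ are ``fat'' (any monotone regular path across a diamond is a Finsler geodesic), so hyperbolicity of $\G$ cannot follow from the Morse lemma plus general metric geometry alone; one needs an input that prevents \emph{orbit paths of the group} from spreading across flats. In \cite{kapovich2018anosov}, and in the companion paper \cite{kapovich2017anosov}, this input is dynamical: the Morse lemma is used to produce flag limits of quasi-rays, one shows that the limit set is antipodal (pairwise transverse flags) and that the action on it is an expanding convergence action, and word hyperbolicity is then deduced from Bowditch's topological characterization of hyperbolic groups, together with their local-to-global theory of Morse quasi-geodesics. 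That machinery, which your sketch replaces with a false black box, is precisely the content of the cited theorem.
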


\begin{rmk}
Since $\Gamma$ is invariant by $g\mapsto g^{-1}$ this definition readily implies that a $\Theta$-Anosov subgroup is also $\op(\Theta)$-Anosov, and thus $\Theta^{sym}$-Anosov, where $\Theta^{sym} = \Theta \cup \op(\Theta)$. There is thus no loss of generality in assuming that $\Theta$ is invariant by the opposition involution.
\end{rmk}

\paragraph{{\it Main example}}

Let us describe more properties of Anosov subgroups in a specific case. In the next section, we will explain how to reduce the general case to this specific case.
\begin{defi}
A finitely generated group $\G \subset \SL(n,\R)$ is called \emph{projective Anosov} if there exist constants $\alpha, A>0$ such that
\[\mu_1(\g) - \mu_2(\g) \geq \alpha \vert \g \vert - A,\]
for all $\g\in \G$.
\end{defi}

\begin{rmk}
By definition, a projective Anosov subgroup is  $\Theta$-Anosov for $\Theta =\{\alpha_{1,2}\}$. Since the opposition involution sends $\alpha_{1,2}$ to $\alpha_{n-1, n}$, projective Anosov subgroups are actually $\Theta^{sym}$-Anosov for $\Theta^{sym} =\{\alpha_{12}, \alpha_{n-1,n}\}$ 
\end{rmk}

The group $\SL(n,\R)$ acts on the projective space $\proj(\R^n) = \{\textrm{lines in }\R^n\}$ and on the ``dual'' projective space $\proj({\R^n}^*)= \{\textrm{hyperplanes in }\R^n\}$. Recall that the \emph{Gromov boundary} $\partial_\infty \G$ of a Gromov hyperbolic group $\G$ is a compact metrizable space on which $\G$ acts by homeomorphisms. The following theorem says that the Gromov boundary of a projective Anosov subgroup is ``realized'' in the projective space:

\begin{theo} \label{theo - AnosovLimitMaps}[\cite{labourie2006anosov},\cite{kapovich2017anosov}] \label{t:BoundaryMapsAnosov}
Let $\G\subset \SL(n,\R)$ be a projective Anosov subgroup. Then there exist $\G$-equivariant continuous maps $\xi: \partial_\infty \Gamma \to \proj(\R^n)$ and $\xi^*: \partial_\infty \Gamma \to \proj({\R^n}^*)$ such that
\[\xi(x)\subset \xi^*(y) \Longleftrightarrow x = y~.\]
These maps are moreover \emph{strongly dynamics preserving} in the following sense: for every sequence $(\g_n)\in \Gamma^\N$ such that $\g_n\underset{n\to +\infty}{\longrightarrow} x \in \partial_\infty \Gamma$ and $\g_n^{-1}\underset{n\to +\infty}{\longrightarrow} y \in \partial_\infty \Gamma$, and for every $v\in \proj(\R^n) \backslash \xi^*(y)$,
\[\g_n v \underset{n\to +\infty}{\longrightarrow} x~.\]
\end{theo}
A consequence of the strongly dynamics preserving property is that every element $\g\in \G$ of infinite order is \emph{proximal}: its action on $\proj(\R^n)$ has a unique attracting fixed point $\xi(\g_+)\in \proj(\R^n)$, with basin of attraction $\proj(\R^n) \backslash \xi^*(\g_-)$ (here $\g_+$ and $\g_-$ denote respectively the attracting and repelling fixed points of $\g$ in $\partial_\infty \Gamma$).

\subsubsection{Fundamental weights and fundamental representations} \label{subsec - fundamental weights and fundamental representations}
Here, we explain how to interpret the $\Theta$-Anosov property as several projective Anosov properties, via linear representations of the Lie group $G$. The content of this section is already described in \cite[Section 3]{gueritaud20017anosov}.
 
Let $\scal{\cdot}{\cdot}$ denote a scalar product on $\fa^*$ invariant under the Weyl group action. 
\begin{defi}
The \emph{fundamental weight} $w_\theta$ associated to a simple root $\theta$ is the unique element of $\fa^*$ such that
\[2\frac{\scal{w_\theta}{\alpha}}{\scal{\alpha}{\alpha}}= \delta_{\alpha,\theta}~,\]
where $\delta_{\alpha,\theta}$ is the Kronecker symbol. 
\end{defi}

The classical representation theory of semi-simple Lie algebras gives the following:
\begin{lemme}
For every $\theta \in \Delta_s$ there is an integer $n_\theta \geq 2$, an integer $k_\theta$ and an irreducible representation $\rho_\theta: G\to \SL(n_\theta,\R)$ mapping $K$ into $\SO(n_\theta)$ and such that
\begin{itemize}
    \item $\theta(\mu(g))= \mu_1(\rho_\theta(g)) - \mu_2(\rho_\theta(g))$,
    \item $k_\theta w_\theta(\mu(g)) = \mu_1(\rho_\theta(g))$,
    \item $\theta\circ \op(\mu(g))= \mu_{n_\theta-1}(\rho_\theta(g)) - \mu_{n_\theta}(\rho_\theta(g))$,
    \item $w_{\theta\circ \op}(\mu(g)) = -\mu_{n_\theta}(\rho_\theta(g))$
\end{itemize}
for all $g\in G$. We call this $\rho_\theta$ the \emph{fundamental representation}\footnote{These properties actually do not characterize a representation, but they do if we assume moreover that $k_\theta$ is minimal. When $G$ is the split real form of a complex Lie group (such as $\SL(n,\R)$), we can have $k_\theta= 1$.}.
\end{lemme}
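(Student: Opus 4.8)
The plan is to realize $\rho_\theta$ as (a real model of) the irreducible representation whose highest weight is an integral multiple of the fundamental weight $w_\theta$, and then to read off the Cartan projection of $\rho_\theta(g)$ directly from the weights of this representation evaluated at $\mu(g)\in\fa_+$.

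First I would fix $k_\theta$ to be the smallest positive integer for which $\lambda := k_\theta w_\theta$ lies in the lattice of highest weights of honest representations of $G$ (and not merely of its simply connected cover); such an integer exists because the character lattice of a maximal torus of $G$ has finite index in the full weight lattice, and $k_\theta=1$ works when $G$ is split, e.g. for $\SL(n,\R)$, where $\rho_\theta$ is a suitable exterior power $\Lambda^k\R^n$. By the theorem of the highest weight there is a corresponding irreducible representation $\rho_\theta$, with $n_\theta := \dim\rho_\theta \geq 2$ since $\lambda\neq 0$; as $G$ is semisimple it has no nontrivial characters, so $\det\rho_\theta\equiv 1$ and (after taking a real model, which for split $G$ is automatic) $\rho_\theta$ maps into $\SL(n_\theta,\R)$. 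Averaging an inner product over $K$ and adjusting it along $\fp$ produces a scalar product compatible with the Cartan involution, i.e. one for which $d\rho_\theta$ is skew-symmetric on $\fk$ and symmetric on $\fp$; this forces $\rho_\theta(K)\subset\SO(n_\theta)$ and makes $\rho_\theta(\exp\fa)$ symmetric positive definite.

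Next I would push the Cartan decomposition through $\rho_\theta$. Writing $g=k\exp(\mu(g))k'$ with $k,k'\in K$, compatibility gives $\rho_\theta(g)=\rho_\theta(k)\exp(d\rho_\theta(\mu(g)))\rho_\theta(k')$ with orthogonal outer factors and symmetric positive definite middle factor. Diagonalizing $d\rho_\theta(\mu(g))$ in a basis of weight vectors, its eigenvalue on the weight space $V_\chi$ is $\chi(\mu(g))$, so $\sqrt{\rho_\theta(g)\rho_\theta(g)^t}$ is conjugate to $\exp(d\rho_\theta(\mu(g)))$, and the numbers $\mu_i(\rho_\theta(g))$ are precisely the values $\chi(\mu(g))$, taken over all weights $\chi$ of $\rho_\theta$ with multiplicity and sorted in decreasing order. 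Since every weight has the form $\lambda-\sum_{\alpha\in\Delta^s}n_\alpha\alpha$ with $n_\alpha\in\N$, and each simple root is $\geq 0$ on $\fa_+$, the largest value is attained (with multiplicity one) at $\lambda$, giving $\mu_1(\rho_\theta(g))=\lambda(\mu(g))=k_\theta w_\theta(\mu(g))$.

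The crux is to show that the \emph{second} largest weight on $\fa_+$ is $\lambda-\theta$. Grade $\rho_\theta$ by the $\alpha_\theta$-coefficient $n_\theta$; the top piece $\{n_\theta=0\}$ is stable under the Levi subalgebra $\mathfrak{l}$ generated by the simple roots $\alpha\neq\theta$ and equals the irreducible $\mathfrak{l}$-module generated by the highest weight line. Because $\scal{w_\theta}{\alpha}=0$ for every simple $\alpha\neq\theta$, this $\mathfrak{l}$-module is one-dimensional, so $\lambda$ is the \emph{only} weight with $n_\theta=0$; hence every other weight $\chi$ satisfies $n_\theta\geq 1$, i.e. $(\lambda-\theta)-\chi$ is a nonnegative combination of simple roots and thus $\chi\leq\lambda-\theta$ on $\fa_+$. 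As $\lambda-\theta$ is itself a weight (since $2\scal{\lambda}{\theta}/\scal{\theta}{\theta}=k_\theta>0$), this yields $\mu_2(\rho_\theta(g))=(\lambda-\theta)(\mu(g))$ and therefore $\mu_1(\rho_\theta(g))-\mu_2(\rho_\theta(g))=\theta(\mu(g))$. The last two identities then follow by duality: applying the first two to $g^{-1}$, using $\mu(g^{-1})=i(\mu(g))$, the fact that the singular values of $\rho_\theta(g^{-1})$ are the reciprocals of those of $\rho_\theta(g)$, and the relation $w_\theta\circ i=w_{\theta\circ i}$. I expect the main obstacle to be exactly this representation-theoretic step — pinning down the subdominant weight as $\lambda-\theta$ and checking that it dominates all non-highest weights on the Weyl chamber — together with the construction of a Cartan-compatible inner product; the descent to $G$, the real-model bookkeeping, and the duality for the bottom two identities are routine.
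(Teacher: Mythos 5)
Your overall strategy---take the irreducible representation with highest weight $\lambda = k_\theta w_\theta$, read the singular values of $\rho_\theta(g)$ off the weights evaluated at $\mu(g)$, and pin down the subdominant weight as $\lambda-\theta$ using the orthogonality $\scal{w_\theta}{\alpha}=0$ for simple $\alpha\neq\theta$---is exactly the classical argument this lemma rests on. The paper itself gives no proof (it invokes ``classical representation theory'' and defers to \cite[Section 3]{gueritaud20017anosov}), so the comparison is with that reference; your computation of the Cartan projection through a Cartan-compatible inner product, your Levi/$\mathfrak{sl}_2$-string identification of the second weight, and the duality trick for the last two bullets all match the standard treatment and are correct as written \emph{for split} $G$, in particular for $\SL(n,\R)$.

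The genuine gap is that the lemma is stated for an arbitrary semisimple $G$, and your argument silently identifies the \emph{restricted} root and weight theory of the real group with the \emph{absolute} (complex) one. The roots, the fundamental weights $w_\theta$, and the Cartan projection $\mu$ in the paper all live on $\fa\subset\fp$, i.e.\ they belong to the restricted root system; for non-split $G$ restricted root spaces have multiplicities, and an irreducible real representation need not have a one-dimensional highest restricted weight space. Your proof uses that one-dimensionality twice: to get $\mu_1(\rho_\theta(g))=\lambda(\mu(g))$ ``with multiplicity one'', and to conclude $\mu_2(\rho_\theta(g))=(\lambda-\theta)(\mu(g))$. Concretely, for $G=\SL(2,\C)$ (restricted system of type $A_1$) the standard representation on $\C^2\simeq\R^4$ is irreducible over $\R$ and its highest restricted weight is the fundamental one, but the corresponding weight space is $2$-dimensional, so $\mu_1-\mu_2\equiv 0\neq\theta(\mu(g))$; the representation the lemma needs there is instead the vector representation of $\SO(3,1)$ on $\R^{3,1}$, with $k_\theta=2$. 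What is missing is precisely Tits' theorem on the existence of irreducible \emph{proximal} real representations with highest restricted weight $k_\theta w_\theta$---this is the input used in \cite[Section 3]{gueritaud20017anosov}, and it cannot be replaced by ``taking a real model'' of the complex irreducible representation. Granting that input, the rest of your argument does go through in the restricted setting (the convex-hull or string arguments work verbatim for restricted weights), so the proof is salvageable by citing Tits and rephrasing everything in restricted terms. A minor point in your favor: your duality step actually yields $k_\theta w_{\theta\circ i}(\mu(g))=-\mu_{n_\theta}(\rho_\theta(g))$; the absence of the factor $k_\theta$ in the paper's fourth bullet is an inconsistency in the statement (harmless when $k_\theta=1$), not an error in your derivation.
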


\begin{rmk}
The equalities above hold when replacing Cartan projections with Jordan projections.
\end{rmk}

\begin{rmk}
The fundamental representation $\rho_{\theta\circ \iota}$ is dual to the the representation $\rho_\theta$.
\end{rmk}

The following proposition easily follows from the definitions of Anosov representation:

\begin{prop}
Let $\Gamma$ be a finitely generated subgroup of $G$ and $\Theta$ be a non-empty subset of $\Delta_s$. Then $\Gamma$ is $\Theta$-Anosov if and only if $\rho_\theta(\Gamma)$ is projective Anosov for all $\theta \in \Theta$.
\end{prop}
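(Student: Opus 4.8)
The plan is to reduce the whole statement to the single algebraic identity
\[\theta(\mu(\gamma)) = \mu_1(\rho_\theta(\gamma)) - \mu_2(\rho_\theta(\gamma))\]
supplied by the first item of the fundamental representation lemma, valid for every $\gamma\in\Gamma$ and every $\theta\in\Delta_s$. The point is that the left-hand side is exactly the quantity controlling the $\theta$-Anosov condition for $\Gamma$, while the right-hand side is exactly the quantity controlling the projective Anosov condition for $\rho_\theta(\Gamma)$. Thus, once this identity is recorded, the equivalence becomes a matter of matching the two defining inequalities and reconciling the word-length normalizations.

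For the direct implication, I would assume $\Gamma$ is $\Theta$-Anosov with constants $C,C'$ and fix $\theta\in\Theta$. Taking as a generating set of $\rho_\theta(\Gamma)$ the image of a finite generating set of $\Gamma$ gives the trivial bound $|\rho_\theta(\gamma)|\leq|\gamma|$. Combined with the identity and the Anosov inequality, this yields, for every $h=\rho_\theta(\gamma)\in\rho_\theta(\Gamma)$,
\[\mu_1(h) - \mu_2(h) = \theta(\mu(\gamma)) \geq C|\gamma| - C' \geq C|h| - C',\]
so $\rho_\theta(\Gamma)$ is projective Anosov. This direction needs nothing beyond $|\rho_\theta(\gamma)|\leq|\gamma|$.

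For the converse, suppose $\rho_\theta(\Gamma)$ is projective Anosov for every $\theta\in\Theta$, with constants $C_\theta,C'_\theta$. The identity turns each projective Anosov inequality into $\theta(\mu(\gamma))\geq C_\theta|\rho_\theta(\gamma)| - C'_\theta$. To recover the $\Theta$-Anosov inequality I must replace $|\rho_\theta(\gamma)|$ by the word length $|\gamma|$ in $\Gamma$, that is, produce an affine lower bound $|\rho_\theta(\gamma)|\geq a|\gamma| - b$. The hard part is precisely this comparison. I would argue that $\rho_\theta$ restricts to a homomorphism $\Gamma\to\rho_\theta(\Gamma)$ with finite kernel — the kernel of the irreducible representation $\rho_\theta$ is central in $G$, hence meets $\Gamma$ in a finite subgroup — and that the quotient of a finitely generated group by a finite normal subgroup is a quasi-isometry for the word metrics, which provides exactly such a bound. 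Feeding it back gives $\theta(\mu(\gamma))\geq C_\theta a|\gamma| - (C_\theta b + C'_\theta)$ for each $\theta$. Because $\Theta$ is finite, I can then take the worst multiplicative constant and the worst additive constant over $\theta\in\Theta$, producing the uniform pair $(C,C')$ in the definition of $\Theta$-Anosov.

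The only genuinely non-formal step is this word-length comparison in the converse; everything else is the substitution of the identity together with the harmless uniformization of finitely many constants. I would note that if one reads ``$\rho_\theta(\Gamma)$ is projective Anosov'' as a statement about the representation $\rho_\theta|_\Gamma$ normalized by the word length of $\Gamma$ itself (the usual convention for Anosov representations), then the comparison evaporates and both implications follow immediately from the identity and the finiteness of $\Theta$.
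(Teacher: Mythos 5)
Your key identity and your forward implication are fine, and they are essentially all the paper uses: the paper states this proposition without proof (``easily follows from the definitions''), the intended argument being exactly the substitution $\theta(\mu(\gamma)) = \mu_1(\rho_\theta(\gamma)) - \mu_2(\rho_\theta(\gamma))$ plus taking the worst of finitely many constants over $\Theta$ --- i.e.\ what your final paragraph describes.

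The gap is in your converse under the literal reading where $|\rho_\theta(\gamma)|$ is the word length of the image group. Your claim that the kernel of the irreducible representation $\rho_\theta$ is central in $G$ is false when $G$ is semisimple but not simple, which the paper allows. If $G = G_1\times G_2$ and $\theta$ is a simple root of the factor $G_1$, the fundamental weight $w_\theta$ vanishes on the Cartan subspace of $G_2$, so $\rho_\theta$ is trivial on all of $G_2$; its kernel contains $G_2$, is not central, and can meet $\Gamma$ in an infinite subgroup. Worse, under that reading the converse is not merely unproven but false: take $\Gamma = \Gamma_1\times\Gamma_2$ with $\Gamma_1\subset G_1$ a $\{\theta\}$-Anosov subgroup and $\Gamma_2\subset G_2$ any infinite discrete subgroup. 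Then $\rho_\theta(\Gamma)=\rho_\theta(\Gamma_1)$ is projective Anosov as a subgroup of $\SL(n_\theta,\R)$, yet $\Gamma$ is not $\{\theta\}$-Anosov, since $\theta(\mu((e,\gamma_2)))=0$ while $|(e,\gamma_2)|$ is unbounded. So no word-length comparison can repair this step, and your closing remark is not an optional simplification but the only correct reading of the statement: ``$\rho_\theta(\Gamma)$ is projective Anosov'' must be understood as a condition on the representation $\rho_\theta|_\Gamma$ normalized by the word length of $\Gamma$ itself (equivalently, one needs finiteness of $\ker(\rho_\theta|_\Gamma)$, which the Anosov inequality gives you for free in the forward direction --- any $\gamma$ in the kernel satisfies $0=\theta(\mu(\gamma))\geq C|\gamma|-C'$ --- but which is simply unavailable in the converse). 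Under that convention both implications are immediate from the identity, which is the paper's implicit proof.
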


\begin{example}
For $G= \SL(n,\R)$, let $\theta_i$ denote the simple root $\alpha_{i,i+1}$ then the fundamental weight $w_i = w_{\theta_i}$ associated to $\theta_i$ is the linear form $\epsilon_1+\ldots + \epsilon_i$, and the fundamental representation $\rho_{\theta_i}$ is the representation of dimension $\left( \begin{matrix} n \\ i \end{matrix} \right)$ given by the action of $\SL(n,\R)$ on $\Lambda^i(\R^n)$. 
\end{example}

Taking tensor products of fundamental representations, one obtains representations for which $\mu_1- \mu_2$ captures the behaviour of several simple roots at once. Given $\Theta$ a non-empty subset of simple roots, denote by $\rho_\theta: G \to \SL(V_\theta)$ the fundamental representations associated to each $\theta\in \Theta$ and by
\[\rho_\Theta = \bigotimes_{\theta \in \Theta} \rho_\theta : G \to \SL\left( \bigotimes_{\theta \in \Theta} V_\theta\right)\]
the tensor product representation.

\begin{prop} \label{prop - SimpleRootsTensorProduct}
For all $g\in G$, we have
\begin{itemize}
    \item $\mu_1(\rho_\Theta(g)) = \sum_{\theta \in \Theta} k_\theta w_\theta(\mu(g))$,
    \item $\mu_1(\rho_\Theta(g)) - \mu_2(\rho_\Theta(g)) = \inf_{\theta\in \Theta} \theta(\mu(g))$.
\end{itemize}
\end{prop}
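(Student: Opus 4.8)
The plan is to compute the Cartan projection of a tensor product representation directly from the singular values of the factors, using the fact that the singular values (equivalently, the weights of $\mu$) of a tensor product are the products (sums, after taking logarithms) of the singular values of the factors. Since each $\rho_\theta$ maps $K$ into $\SO(n_\theta)$, the map $g \mapsto \sqrt{\rho_\Theta(g)\rho_\Theta(g)^t}$ decomposes compatibly with the tensor structure, and its eigenvalues are exactly the products of the eigenvalues of the individual $\sqrt{\rho_\theta(g)\rho_\theta(g)^t}$.

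\textbf{First step.} I would set $a := \mu(g) \in \fa_+$ and record, for each $\theta \in \Theta$, the full ordered spectrum of $\mu(\rho_\theta(g))$. I will only need the top two singular values of each factor, so I introduce the notation $s_\theta^{(1)} := \mu_1(\rho_\theta(g)) = k_\theta w_\theta(a)$ and $s_\theta^{(2)} := \mu_2(\rho_\theta(g)) = k_\theta w_\theta(a) - \theta(a)$, using the first two bullets of the fundamental-representation lemma. The logarithms of the singular values of $\rho_\Theta(g) = \bigotimes_\theta \rho_\theta(g)$ are all sums $\sum_{\theta \in \Theta} \ell_\theta$, where $\ell_\theta$ ranges over the logarithmic singular values of $\rho_\theta(g)$, and these index sets multiply across factors. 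Therefore $\mu_1(\rho_\Theta(g)) = \sum_\theta s_\theta^{(1)} = \sum_\theta k_\theta w_\theta(a)$, which is the first bullet.

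\textbf{Second step.} For the second bullet I need the largest singular value \emph{gap}. The largest product is obtained by choosing the top singular value in every factor; the second-largest is obtained by demoting exactly one factor $\theta_0$ from its top singular value $s_{\theta_0}^{(1)}$ to its second $s_{\theta_0}^{(2)}$, while keeping all others at their top value (demoting by more, or demoting in a second factor, can only decrease the product further, since $s_\theta^{(1)} \geq s_\theta^{(2)} \geq \cdots$ for each $\theta$). Hence
\[
\mu_2(\rho_\Theta(g)) = \max_{\theta_0 \in \Theta}\Big( s_{\theta_0}^{(2)} + \sum_{\theta \neq \theta_0} s_\theta^{(1)} \Big) = \mu_1(\rho_\Theta(g)) - \min_{\theta_0 \in \Theta}\big( s_{\theta_0}^{(1)} - s_{\theta_0}^{(2)} \big),
\]
and since $s_{\theta_0}^{(1)} - s_{\theta_0}^{(2)} = \theta_0(a)$ by the first two bullets of the lemma, subtracting gives $\mu_1(\rho_\Theta(g)) - \mu_2(\rho_\Theta(g)) = \inf_{\theta \in \Theta} \theta(\mu(g))$, as claimed.

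\textbf{The main obstacle} is the combinatorial verification that the second-largest product of the singular values really is achieved by the single-demotion configuration above, rather than by some multi-factor rearrangement. This is where I must use that each factor's singular values are \emph{ordered and positive} so that any alternative choice replaces a ratio $s_{\theta_0}^{(1)}/s_{\theta_0}^{(2)} \geq 1$ by a product of such ratios over one or more factors, which can only be larger and hence pushes the corresponding singular value strictly below $\mu_2$. I would phrase this cleanly by working with the logarithmic gaps $\theta(a) \geq 0$: the top log-singular-value is $\sum_\theta s_\theta^{(1)}$, and every other log-singular-value is obtained by subtracting a sum of one or more gaps $\theta(a)$ (with multiplicity bounded by the number of singular values available in each factor), so the smallest positive subtraction is exactly $\min_\theta \theta(a)$. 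The only subtlety worth a remark is the edge case where some $\theta(a) = 0$ forces a multiplicity in $\mu_1$, but this does not affect the value of the gap $\mu_1 - \mu_2$.
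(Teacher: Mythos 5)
Your proof is correct, and there is in fact no proof in the paper to compare it with: Proposition \ref{prop - SimpleRootsTensorProduct} is stated without justification, the paper treating this whole subsection on fundamental weights and tensor products as classical material (it refers to Section 3 of \cite{gueritaud20017anosov}). Your argument is the standard one and fills the omission completely: the singular values of a tensor product are the products of the singular values of the factors (since $(A\otimes B)(A\otimes B)^t=(AA^t)\otimes(BB^t)$, this is pure linear algebra and does not even require the compatibility with $K$ that you invoke), so the log-singular values of $\rho_\Theta(g)$ are the sums $\sum_\theta s_\theta^{(i_\theta)}$ over multi-indices; the top one is $\sum_\theta s_\theta^{(1)}=\sum_\theta k_\theta w_\theta(\mu(g))$, and the second one (with multiplicity) is the maximum over multi-indices different from $(1,\dots,1)$, which your single-demotion argument correctly identifies as $\mu_1(\rho_\Theta(g))-\min_{\theta}\bigl(s_\theta^{(1)}-s_\theta^{(2)}\bigr)=\mu_1(\rho_\Theta(g))-\inf_\theta\theta(\mu(g))$, using the fundamental-representation lemma for the individual gaps. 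One minor imprecision, which does not affect the proof: in your closing remark, for $i\geq 3$ the quantity $s_\theta^{(1)}-s_\theta^{(i)}$ is not a sum of copies of the single gap $\theta(\mu(g))$ (lower weights of $\rho_\theta$ differ from the highest weight by nonnegative combinations of \emph{all} simple roots, not just $\theta$); but your step-two argument never uses this claim, only the correct inequality $s_\theta^{(i)}\leq s_\theta^{(2)}$ for all $i\geq 2$, so the conclusion stands as written.
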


As a corollary we obtain the following:
\begin{coro} \label{coro - anosov is projective anosov}
A subgroup $\G\subset G$ is $\Theta$-Anosov if and only if $\rho_\Theta(\G)$ is projective Anosov.
\end{coro}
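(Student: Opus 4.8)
The plan is to read the equivalence off the second identity of Proposition~\ref{prop - SimpleRootsTensorProduct},
\[\mu_1(\rho_\Theta(g)) - \mu_2(\rho_\Theta(g)) = \inf_{\theta \in \Theta} \theta(\mu(g))~,\]
whose very purpose is to repackage the finite family of simple-root functionals $\{\theta(\mu(\cdot))\}_{\theta \in \Theta}$ governing the $\Theta$-Anosov condition into the single functional $\mu_1(\rho_\Theta(\cdot)) - \mu_2(\rho_\Theta(\cdot))$ governing projective Anosovness of $\rho_\Theta$. Everything else is bookkeeping with the defining inequalities, so no new geometric input beyond Proposition~\ref{prop - SimpleRootsTensorProduct} is needed.

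First I would rephrase the $\Theta$-Anosov condition. Because $\Theta$ is finite, the existence of constants $C, C' > 0$ with $\theta(\mu(g)) \geq C|g| - C'$ for every $\theta \in \Theta$ and every $g \in \G$ is equivalent to the single estimate $\inf_{\theta \in \Theta} \theta(\mu(g)) \geq C|g| - C'$ for all $g \in \G$: one implication is immediate, and for the other it suffices to replace $C$ and $C'$ by the worst constants over the finite set $\Theta$. This is the only place where finiteness of $\Theta$ is used. Substituting the displayed identity then rewrites this as
\[\mu_1(\rho_\Theta(g)) - \mu_2(\rho_\Theta(g)) \geq C|g| - C' \qquad \text{for all } g \in \G~,\]
which is exactly the projective Anosov inequality for $\rho_\Theta$, except that the word length here is that of $\G$.

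The step that deserves genuine care — and which I expect to be the main obstacle — is precisely this comparison of word metrics: the projective Anosov property of the subgroup $\rho_\Theta(\G) \subset \SL\bigl(\bigotimes_{\theta\in\Theta} V_\theta\bigr)$ is phrased using the word length of $\rho_\Theta(\G)$, whereas the estimate above uses the word length of $\G$. For compatible generating sets the surjection $\rho_\Theta \colon \G \to \rho_\Theta(\G)$ is distance non-increasing, so $|\rho_\Theta(g)| \leq |g|$; since $C>0$, this upgrades the displayed estimate to $\mu_1(\rho_\Theta(g)) - \mu_2(\rho_\Theta(g)) \geq C|\rho_\Theta(g)| - C'$ and yields the implication from $\Theta$-Anosov to projective Anosov. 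For the converse one needs the reverse comparison $|g| \leq c\,|\rho_\Theta(g)| + c'$, i.e. that $\rho_\Theta$ restricts to a quasi-isometric embedding of $\G$ onto its image; this is guaranteed as soon as $\ker(\rho_\Theta|_\G)$ is finite, which holds whenever $\rho_\Theta$ has finite kernel on $G$ (the case of the main example $\SL(n,\R)$, where the kernel is central). Feeding this reverse comparison into the estimate recovers the $\Theta$-Anosov inequality, and the two implications together establish the corollary.
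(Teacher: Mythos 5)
Your proposal is correct and its core is exactly the paper's (implicit) argument: the corollary is stated there as an immediate consequence of Proposition~\ref{prop - SimpleRootsTensorProduct}, and your first two paragraphs are precisely that deduction --- take the worst constants over the finite set $\Theta$, then substitute the identity $\mu_1(\rho_\Theta(g)) - \mu_2(\rho_\Theta(g)) = \inf_{\theta\in\Theta}\theta(\mu(g))$.

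The word-length discussion is where you go beyond the paper, and it deserves a comment. The cleanest way to dissolve the issue is to read the projective Anosov condition for $\rho_\Theta(\G)$ with respect to the word length of $\G$ itself, i.e.\ to regard the statement as one about the representation $\rho_\Theta|_\G$ (this is the convention of Gu\'eritaud--Guichard--Kassel--Wienhard, and clearly the paper's intent); with that reading both implications are immediate and no hypothesis on the kernel is needed. If instead one insists, as you do, on the word length of the image group $\rho_\Theta(\G)$, then your finite-kernel caveat is not mere prudence but genuinely necessary, and it is \emph{not} automatic for general semisimple $G$: the kernel of $\rho_\Theta$ contains every simple factor of $G$ none of whose simple roots lies in $\Theta$. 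In that situation the ``only if'' direction can actually fail. For instance, take $G = \SL(2,\R)\times\SL(2,\R)$, let $\Theta$ consist of the simple root of the first factor (so $\rho_\Theta$ is the projection to the first factor acting on $\R^2$), and let $\G = \G_1 \times \Z$ with $\G_1$ a convex cocompact Fuchsian group and $\Z$ generated by a hyperbolic element of the second factor: then $\rho_\Theta(\G) = \G_1$ is projective Anosov, but $\G$ is not $\Theta$-Anosov, since $\theta(\mu(g)) = 0$ for every $g$ in the second factor while $|g|$ grows linearly. So your forward direction is complete in all cases, and your converse is complete exactly under the representation-level reading (or under the finite-kernel hypothesis) --- which is also what is needed for the statement itself to be true.
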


The behaviour of the limit maps under these tensor products is given by the following proposition. Let $\G \subset G$ be a $\Theta$-Anosov subgroup and let $\xi_\theta: \partial_\infty \Gamma \to \proj(V_\theta)$ be the boundary map associated to $\rho_\theta(\G)$, seen as a projective Anosov subgroup of of $\SL(V_\theta)$.

\begin{prop} \label{prop - boundary map tensor product}
The boundary map $\xi_\Theta$ associated to $\rho_\Theta(\G)$ sends a point $x\in \partial_\infty \G$ to
\[\xi_\Theta(x) = \bigotimes_{\theta\in \Theta} \xi_\theta(x) \in \proj\left(\bigotimes_{\theta\in \Theta} V_\theta\right)~.\]
\end{prop}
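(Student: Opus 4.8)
The plan is to identify the boundary map $\xi_\Theta$ of the projective Anosov group $\rho_\Theta(\G)$ (which is projective Anosov by the preceding corollary) with the map $\sigma\colon x\mapsto\bigotimes_{\theta\in\Theta}\xi_\theta(x)$, by showing that both are continuous and that they agree on a dense subset of $\partial_\infty\G$. First I would check that $\sigma$ is well defined, continuous and equivariant. It is the composition of the continuous maps $\xi_\theta$ with the Segre map $\prod_{\theta}\proj(V_\theta)\to\proj(\bigotimes_\theta V_\theta)$ sending $([v_\theta])_\theta$ to $[\bigotimes_\theta v_\theta]$; this map is continuous because a tensor product of nonzero vectors is nonzero, so the image genuinely lands in the projective space. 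Equivariance of $\sigma$ is immediate from equivariance of the $\xi_\theta$ together with the definition $\rho_\Theta=\bigotimes_\theta\rho_\theta$.

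The core computation takes place at the attracting fixed points. Let $g\in\G$ have infinite order, with attracting fixed point $g_+\in\partial_\infty\G$. For each $\theta$ the Anosov property forces $\theta(\lambda(g))>0$ (since $\theta(\mu(g^n))\geq C|g^n|-C'$ grows linearly in $n$), and by the Lemma relating $\theta$ to $\rho_\theta$, read on Jordan projections, this equals $\lambda_1(\rho_\theta(g))-\lambda_2(\rho_\theta(g))$. Hence $\rho_\theta(g)$ is proximal: it has a single eigenvalue $\nu_\theta$ of maximal modulus, whose eigenline is $\xi_\theta(g_+)$ by Theorem \ref{t:BoundaryMapsAnosov}. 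The spectrum of $\rho_\Theta(g)=\bigotimes_\theta\rho_\theta(g)$ consists of the products $\prod_\theta\nu'_\theta$ with each $\nu'_\theta$ an eigenvalue of $\rho_\theta(g)$, and the modulus $\prod_\theta|\nu'_\theta|$ is maximal precisely for the tuple $\nu'_\theta=\nu_\theta$. Thus $\rho_\Theta(g)$ is itself proximal, with dominant eigenline $\bigotimes_\theta\xi_\theta(g_+)=\sigma(g_+)$, and Theorem \ref{t:BoundaryMapsAnosov} applied to $\rho_\Theta(\G)$ identifies this dominant eigenline with $\xi_\Theta(g_+)$. Therefore $\xi_\Theta(g_+)=\sigma(g_+)$ for every infinite-order $g$.

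Finally I would conclude by density. The attracting fixed points of infinite-order elements form a dense subset of the Gromov boundary $\partial_\infty\G$, the elementary case where $\partial_\infty\G$ has at most two points being trivial since each such point is already a fixed point of an infinite-order element. As $\xi_\Theta$ and $\sigma$ are continuous maps into the Hausdorff space $\proj(\bigotimes_\theta V_\theta)$ agreeing on a dense subset, they coincide everywhere, which is the asserted formula. I expect the main obstacle to be the tensor-product spectral step: one must justify that the maximal-modulus eigenvalue of $\bigotimes_\theta\rho_\theta(g)$ is \emph{simple}, with eigenline the tensor product of the factor eigenlines, even when the $\rho_\theta(g)$ are not diagonalizable. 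This follows from the multiplicativity of spectra (with multiplicities) under tensor products together with the strict domination $|\nu_\theta|>|\nu'_\theta|$ in each factor, which ensures that the product $\prod_\theta\nu_\theta$ is realized by a unique tuple of factor eigenvalues, hence corresponds to the one-dimensional space $\bigotimes_\theta\xi_\theta(g_+)$.
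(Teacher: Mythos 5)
Your argument is correct, but there is nothing in the paper to compare it against: the paper states this proposition without proof, implicitly deferring to the treatment of fundamental representations and tensor products in Section 3 of Gu\'eritaud--Guichard--Kassel--Wienhard. Your proof is the standard self-contained route, and it works: equivariance and continuity of $x\mapsto\bigotimes_\theta\xi_\theta(x)$ via the Segre embedding; proximality of each $\rho_\theta(g)$ from $\theta(\lambda(g))>0$ together with the lemma on fundamental representations read on Jordan projections; multiplicativity of the spectrum (with multiplicities) under tensor products, so that strict domination in each factor forces the top eigenvalue of $\rho_\Theta(g)$ to be simple with eigenline $\bigotimes_\theta\xi_\theta(g_+)$; identification with $\xi_\Theta(g_+)$ by Theorem \ref{t:BoundaryMapsAnosov} applied to $\rho_\Theta(\G)$; and conclusion by density of attracting fixed points plus continuity. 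You were right to single out the simplicity of the top eigenvalue of the tensor product as the key spectral point, and your justification of it is the correct one. Two ingredients you use are implicit rather than stated in the paper, and you should acknowledge them as such: first, $\theta(\lambda(g))>0$ requires that $|g^n|$ grow linearly in $n$ for infinite-order $g$, which holds because infinite-order elements of a hyperbolic group have positive stable translation length; second, continuity of the boundary map $\xi_\Theta$ is not asserted in the paper's statement of Theorem \ref{t:BoundaryMapsAnosov}, though it is part of the standard Anosov package in the references cited there (Labourie, Kapovich--Leeb--Porti). Neither point is a gap, but both deserve a citation in a complete write-up. A final cosmetic remark: the identification of $\partial_\infty\G$ with $\partial_\infty\rho_\Theta(\G)$ (the kernel of $\rho_\Theta$ on $\G$ being finite) is used silently both by you and by the paper, so no objection there.
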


\begin{rmk}
The boundary map $\xi_\Theta$ takes values in the algebraic set of pure tensors in $\proj\left(\bigotimes_{\theta\in \Theta} V_\theta\right)$ which is canonically isomorphic to $\prod_{\theta \in \Theta} \proj (V_\theta)$.
\end{rmk}

\begin{example} \label{example - Symmetric boundary map}
Let $\Gamma \subset \SL(n,\R)$ be a projective Anosov subgroup. Then $\Gamma$ is $\Theta$-Anosov with $\Theta = \{\alpha_{1,2}, \alpha_{n-1,n}\}$. Let $V$ denote the space $\R^n$ seen as the standard representation of $\SL(n,\R)$. Then $\rho_\Theta: \SL(n,\R) \to \SL(V\otimes V^*)$ is the tensor product of the standard representation and its dual.

If $\xi:\partial_\infty \G \to \proj (V)$ and $\xi^*:\partial_\infty \G \to \proj(V^*)$ denote the boundary maps from Theorem \ref{t:BoundaryMapsAnosov}, then the boundary map associated to $\rho_\Theta$ is the map
\[\xi^{sym} = (\xi, \xi^*): \partial_\infty \Gamma \to \proj(V)\times \proj(V^*) \subset \proj(V\otimes V^*)~.\]
\end{example}


For future use, we introduce the following notations. Given a subset $\Theta$ of $\Delta_s$, we define
\[C(\Theta) = \bigcup_{\theta \in \Theta}\{v\in \fa_+ \mid \theta(v) = 0\}\]
and 
\[C^*(\Theta) = \Span_{\R_+}(\Theta)~.\]
Define also
\[\fa_+(\Theta) = \fa_+ \setminus C(\Theta)\]
and 
\[\fa_+^*(\Theta) = \fa_+^* \setminus C^*(\Delta_s -\Theta) = \{\phi \in \fa_+^*\mid \phi_{\vert \fa_+(\Theta)} >0\}.\]

\begin{rmk}
The motivation to consider such a subset of linear forms comes from the counting of elements of the group, as we will see in the next section. For a $\Theta$-Anosov subgroup $\G$, we know that the Cartan projections of elements of $\G$ lie in a closed cone contained in $\fa_+(\Theta)$. In particular, $\phi(\mu(g))$ grows linearly with $|g|$ for all $\phi\in\fa_+^*(\Theta)$.
\end{rmk}

\paragraph{{\it Main example.}} Let $G$ be $\SL(n,\R)$ and $\Theta = \{\alpha_{1,2}, \alpha_{n-1,n}\}$. We then have
$$C(\Theta) = \{v\,  \in \fa_+\, |\, \alpha_{1,2} (v) = 0 \}\cup \{v\,  \in \fa_+\, |\,  \alpha_{n-1,n} (v) = 0 \}.$$

Thus the set $\mathfrak{a}_+(\Theta)$ consists of diagonal matrices for which there is a spectral gap between the two highest, and between the  two lowest eigenvalues. 
Finally, $\fa_+^*(\Theta)$ is the set of linear forms on $\fa$ which are strictly positive on $\fa^+$ except maybe on the walls of the Weyl chambers defined by the equality of the two highest (resp. smallest) eigenvalues. In coordinates this means that any linear form $\phi\in \fa_+^*(\Theta)$ can be written as $\phi= \sum_{i=1}^{n-1} x_i \alpha_{i,i+1}$, for $x_i \in \R^{n-1}$  where $x_i> 0$ for all $i \in \{ 2, ..., n-2\}$ and $x_i\geq 0$ for $i\in \{ 1, n-1\}$. 

\subsection{Critical exponents and entropies} \label{subsec - Critical exponents symmetric space}

The critical exponent of a discrete group of isometries of a metric space is the exponential growth rate of the orbit of a basepoint. In the case of a discrete subgroup $\G$ of a higher rank semisimple Lie group $G$ acting on its symmetric space $X$, one can define a critical exponent for each $G$-invariant distance on $X$, and more generally for every choice of a way of measuring the ``size'' of Cartan projections. Following Quint \cite{quint2002divergence}, we focus here on non-negative linear forms on the Weyl chamber.

\begin{defi}
Let $\G$ be a discrete subgroup of $G$ and $\varphi$ a linear form on $\fa$ which is non-negative on the Weyl chamber. We define the \emph{$\varphi$-critical exponent} of $\G$ as
\begin{eqnarray*}
\delta_\varphi(\G) &=& \limsup_{R\tv \infty} \frac{1}{R} \log \left(\Card \{ \g \in \G \, |\, \varphi(\mu(\g)) \leq R\} \right)\\
&=& \inf\{s>0\mid \sum_{\g\in \G} e^{-s\varphi(\mu(\g))} < +\infty\}~.
\end{eqnarray*}
\end{defi}

In full generality, $\delta_\varphi(\G)$ has no reason to be finite. However, for finitely generated groups, Quint showed in \cite{quint2002divergence} that $\delta_\phi(\G)$ is finite as soon as $\varphi$ is positive on the \emph{limit cone} of $\G$, defined as
\[\Cone(\G) = \bigcap_{n\in \N}\overline{\bigcup_{\g \in \G, \vert \g \vert \geq n} \R \mu(\g)}~.\]
Applying his results to the case of Anosov representations gives the following
\begin{prop}[Quint, \cite{quint2002divergence}]
Let $\Theta$ be a non-empty subset of $\Delta_s$. Then
\[\delta_\varphi(\G) < +\infty\]
for every linear form $\varphi$ in $\fa_+^*(\Theta)$ and every  $\Theta$-Anosov subgroup $\G$. Moreover, the map
\[\varphi \mapsto \delta_\varphi\]
is convex and homogeneous of degree $-1$ on $\fa_+^*(\Theta)$.
\end{prop}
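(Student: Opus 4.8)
The plan is to treat the three assertions in turn, the $\Theta$-Anosov hypothesis being needed only for the finiteness.

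\emph{Finiteness.} First I would show that the limit cone is confined to $\fa_+(\Theta)$ away from the origin, i.e. that every nonzero $v \in \Cone(\G)$ lies in $\fa_+(\Theta)$. Since $\G$ is finitely generated, subadditivity of $d_R$ along a word gives a constant $L$ with $\norm{\mu(\g)}_{\textit{eucl}} = d_R(o,\g\cdot o) \leq L|\g|$, while the $\Theta$-Anosov condition gives $\theta(\mu(\g)) \geq C|\g| - C'$ for every $\theta \in \Theta$. Hence any limit direction $\hat v = \lim_k \mu(\g_k)/\norm{\mu(\g_k)}_{\textit{eucl}}$ (with $|\g_k|\to\infty$) satisfies $\theta(\hat v) = \lim_k \theta(\mu(\g_k))/\norm{\mu(\g_k)}_{\textit{eucl}} \geq C/L > 0$ for all $\theta \in \Theta$, so $\hat v \in \fa_+(\Theta)$; as $\Cone(\G)$ is generated by such directions, this proves the inclusion. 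By definition any $\varphi \in \fa_+^*(\Theta)$ is strictly positive on $\fa_+(\Theta)$, hence on $\Cone(\G)\setminus\{0\}$, and Quint's criterion recalled above yields $\delta_\varphi(\G) < +\infty$. (Alternatively, the same two estimates give $\varphi(\mu(\g)) \geq c|\g| - c'$ with $c>0$, after noting that $\varphi$ attains a positive minimum on the compact set of admissible unit directions, and one counts directly using the at-most-exponential growth of balls in $\G$.)

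\emph{Homogeneity and convexity.} Homogeneity of degree $-1$ is immediate: substituting $R = tR'$ in the counting function shows $\delta_{t\varphi}(\G) = t^{-1}\delta_\varphi(\G)$ for every $t>0$. For convexity I would use the description $\delta_\varphi = \inf\{s>0 \mid \sum_{\g\in\G} e^{-s\varphi(\mu(\g))} < +\infty\}$. Fix $\varphi_0,\varphi_1 \in \fa_+^*(\Theta)$ and $t\in[0,1]$, and set $\varphi_t = (1-t)\varphi_0 + t\varphi_1$, which again lies in $\fa_+^*(\Theta)$ since this set is convex. Given $s_0 > \delta_{\varphi_0}$ and $s_1 > \delta_{\varphi_1}$, put $s = \bigl(\tfrac{1-t}{s_0} + \tfrac{t}{s_1}\bigr)^{-1}$, so that $\lambda_0 = \tfrac{s(1-t)}{s_0}$ and $\lambda_1 = \tfrac{st}{s_1}$ satisfy $\lambda_0 + \lambda_1 = 1$. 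Since $s\varphi_t(\mu(\g)) = s(1-t)\varphi_0(\mu(\g)) + st\varphi_1(\mu(\g))$, Hölder's inequality with conjugate exponents $1/\lambda_0$ and $1/\lambda_1$ gives
\[\sum_{\g\in\G} e^{-s\varphi_t(\mu(\g))} \leq \Bigl(\sum_{\g\in\G} e^{-s_0\varphi_0(\mu(\g))}\Bigr)^{\lambda_0} \Bigl(\sum_{\g\in\G} e^{-s_1\varphi_1(\mu(\g))}\Bigr)^{\lambda_1} < +\infty,\]
so that $\delta_{\varphi_t} \leq s$. Letting $s_0 \to \delta_{\varphi_0}$ and $s_1 \to \delta_{\varphi_1}$ yields the harmonic bound $\delta_{\varphi_t} \leq \bigl(\tfrac{1-t}{\delta_{\varphi_0}} + \tfrac{t}{\delta_{\varphi_1}}\bigr)^{-1}$, and since the weighted harmonic mean is dominated by the weighted arithmetic mean this gives $\delta_{\varphi_t} \leq (1-t)\delta_{\varphi_0} + t\delta_{\varphi_1}$, i.e. convexity. (Finiteness of $\delta_{\varphi_0},\delta_{\varphi_1}$ comes from the previous step, and the argument degenerates gracefully if one of them vanishes.)

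\emph{Main obstacle.} The only real content is the finiteness step, and more precisely the exact matching between the hypothesis $\varphi \in \fa_+^*(\Theta)$ and the $\Theta$-Anosov property: the forms that are positive on $\fa_+(\Theta)$ are precisely those whose growth is controlled by the simple roots in $\Theta$, for which the Anosov lower bound together with the linear upper bound on $\norm{\mu(\g)}_{\textit{eucl}}$ forces linear growth of $\varphi(\mu(\g))$ in $|\g|$. Once the limit cone is confined to $\fa_+(\Theta)$, the remaining estimates, including the Hölder step for convexity, are routine.
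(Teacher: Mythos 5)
Your proof is correct. For the finiteness assertion you follow essentially the route the paper intends: the paper gives no proof of its own, attributing the statement to Quint, with the preceding paragraph recalling his criterion (finiteness of $\delta_\varphi$ whenever $\varphi$ is positive on $\Cone(\G)$) and a later remark recording that the Cartan projections of a $\Theta$-Anosov group lie in a closed cone contained in $\fa_+(\Theta)$. Your two linear estimates $\theta(\mu(\g))\geq C|\g|-C'$ and $\norm{\mu(\g)}_{\textit{eucl}}\leq L|\g|$ simply make that confinement of the limit cone explicit, and your parenthetical direct counting argument even makes this step independent of Quint's criterion, using only the at-most-exponential growth of balls in a finitely generated group. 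Where you genuinely depart from the paper is in the convexity and homogeneity: the paper inherits these from Quint's theory of the growth indicator function, whereas your H\"older interpolation is elementary and self-contained, and it yields a conclusion stronger than convexity, namely the harmonic-mean bound
\[\delta_{(1-t)\varphi_0+t\varphi_1}\ \leq\ \Bigl(\frac{1-t}{\delta_{\varphi_0}}+\frac{t}{\delta_{\varphi_1}}\Bigr)^{-1},\]
i.e.\ concavity of $\varphi\mapsto 1/\delta_\varphi$, from which ordinary convexity follows by the AM--HM inequality as you note. The trade-off is clear: the citation route ties the proposition to Quint's finer structure theory (which is what makes the attribution accurate), while your route gives a complete proof within the paper using nothing beyond the Anosov definition, subadditivity of $d_R$, growth of balls, and H\"older's inequality.
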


In a similar way, one can consider the exponential growth rate of the Jordan projections.

\begin{defi} \label{def - entropy}
Let $\G$ be a discrete subgroup of $G$ and $\varphi$ a linear form on $\fa$ which is non-negative on the Weyl chamber. We define the \emph{$\varphi$-entropy} of $\G$ as
\begin{eqnarray*}
h_\varphi(\G) &=& \limsup_{R\tv \infty} \frac{1}{R} \log \Card \{ [\g] \in [\G] \mid  \varphi(\lambda(\g)) \leq R\} \\
&=& \inf\{s>0\mid \sum_{[\g]\in [\G]} e^{-s\varphi(\lambda(\g))} < +\infty\}~,
\end{eqnarray*}
where $[\G]$ denotes the set of conjugacy classes in $\G$.
\end{defi}

The term ``entropy'' comes from the analogy with the geodesic flow of a closed negatively curved manifold, whose closed orbits are in bijection with conjugacy classes in the fundamental group, and whose topological entropy equals the exponential growth rate of lengths of closed orbits. In the case where $\phi$ is a linear combination of the fundamental weights $w_\theta, \theta \in \Theta$, this is more than an analogy: one can associate to a $\Theta$-Anosov subgroup $\Gamma$ of $G$ a flow on a compact metric space, whose orbits are in bijection with conjugacy classes in $\Gamma$, and such that the length of the orbit associated to $g$ is given by $\phi(\lambda(g))$. This flow has a hyperbolicity property, and its topological entropy is $h_\phi$ (see for instance \cite{sambarino2014quantitative}).

For sufficiently nice discrete groups of isometries of a negatively curved manifold, the critical exponent equals the entropy. For a Zarisky dense $\Theta$-Anosov group, Sambarino obtained in  \cite{sambarino2014quantitative} precise counting estimates for 
\[\Card \{\gamma \in \Gamma \mid w_\theta(\mu(\gamma))\leq R\}~,\]
implying in particular that $h_\phi(\Gamma) = \delta_\phi(\Gamma)$ when $\phi$ is  a linear combination of the fundamental weights $\{w_\theta,\theta \in \Theta\}$. The tools he uses, however, do not seem to apply to simple root critical exponents in general. Here we prove the equality $\delta_\phi = h_\phi$ whenever we manage to generalize the classical arguments that work in negative curvature.

For the sake of clarity, let us first state our result in the main case of interest for us.

\begin{theorem}\label{theo-specialcaseSL(n,R)}
Let $\Gamma$ be a projective Anosov subgroup of $\SL(n,\R)$. Then 
\begin{itemize}
\item  $h_{1,2}(\Gamma)\leq \delta_{1,2}(\Gamma),$
\item $h_{1,n}(\Gamma)= \delta_{1,n}(\Gamma).$
\end{itemize}
If $\G$ is moreover Zariski dense in $\SL(n,\R)$, then 
\begin{itemize}
\item $h_{1,2}(\Gamma) = \delta_{1,2}(\Gamma)$. 
\end{itemize}
\end{theorem}

This theorem will be a particular case of a more general result for $\Theta$-Anosov subgroups of a semi-simple Lie group $G$. Recall that $C^*(\Theta)$ denotes the set of non-negative linear combinations of the simple roots $\theta \in \Theta$. Let us denote by $W(\Theta)$ the span of $\{w_\theta, \theta \in \Theta\}$, and define 
\[D^*(\Theta) = \{\phi = \alpha + \beta, \alpha \in C^*(\Theta), \beta \in W(\Theta)\} \subset \fa^*~.\]

\begin{theo} \label{theo-GeneralComparisonEntropyCriticalExponent}
Let $\Gamma$ be a $\Theta$-Anosov subgroup of $G$. Then
\begin{itemize}
    \item $h_\phi(\Gamma) \leq \delta_\phi(\Gamma)$ for all $\phi\in D^*(\Theta)\cap \fa_+^*(\Theta)$,
    \item $h_\phi(\Gamma) = \delta_\phi(\Gamma)$ for all $\phi \in W(\Theta)\cap \fa_+^*(\Theta)$.
\end{itemize}

If $\G$ is moreover Zariski dense in $G$, then
\begin{itemize}
\item $h_\phi(\G) =\delta_\phi(\G)$ for all $\phi \in D^*(\Theta)\cap \fa_+^*(\Theta)$.
\end{itemize}
\end{theo}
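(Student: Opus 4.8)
The plan is to prove the three assertions in turn, obtaining the equality on $W(\Theta)$ by combining the first inequality with a lower bound that does not appeal to Zariski density. Throughout I use that $\phi(\mu(\gamma))$ grows linearly with word length for $\phi\in\fa_+^*(\Theta)$, that $\lambda(\gamma)=\lim_n \mu(\gamma^n)/n$, and Benoist's generalized triangle inequality.

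\textbf{Upper bound $h_\phi\le\delta_\phi$.} I would bound the number of conjugacy classes by the number of group elements: in each infinite-order class $[\gamma]$ I choose a representative $\gamma_0$ of minimal word length. Since $\Gamma$ is Gromov hyperbolic, this length differs from the stable length $\lim_n|\gamma^n|/n$ by a uniform additive constant, and the bi-infinite word $\dots\gamma_0\gamma_0\dots$ is a uniform quasi-geodesic passing at bounded distance from the identity. The crux is the estimate $\phi(\mu(\gamma_0))\le\phi(\lambda(\gamma))+C$ with $C$ independent of $[\gamma]$. I would derive it from the Morse property of Anosov quasi-geodesics: the orbit map carries the axis of $\gamma_0$ in the Cayley graph to a quasi-geodesic uniformly close to the translation flat of $\gamma$ in $X$, placing the basepoint $o$ within bounded distance of that flat. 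This forces $\mu(\gamma_0)-\lambda(\gamma)$ to stay bounded in the directions detected by the roots of $\Theta$, and $D^*(\Theta)\cap\fa_+^*(\Theta)$ is exactly the cone of forms sensitive only to those directions — which is why both the estimate and the statement are confined to this cone. The injection $[\gamma]\mapsto\gamma_0$ then yields $\#\{[\gamma]:\phi(\lambda(\gamma))\le R\}\le\#\{g:\phi(\mu(g))\le R+C\}$, hence $h_\phi\le\delta_\phi$.

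\textbf{Lower bound for $\phi\in W(\Theta)$, without Zariski density.} Here I would manufacture conjugacy classes by a closing argument rather than by passing to a Schottky subgroup, which would lose entropy. Writing $\phi=\sum_{\theta\in\Theta}c_\theta w_\theta$ and using $k_\theta w_\theta(\mu(g))=\mu_1(\rho_\theta(g))$, the relevant quantity is governed by top singular values in the fundamental representations, where proximal dynamics is available. For a generic element $g$ with $\phi(\mu(g))\le R$ I would apply an Anosov-type closing lemma: multiplying $g$ by one of finitely many bounded ``closing'' elements $c$ — finitely many suffice because the relevant attracting/repelling flag data lives in the compact limit set $\xi_\Theta(\partial_\infty\Gamma)$ — produces a loxodromic $gc$ with $\phi(\lambda(gc))=\phi(\mu(g))+O(1)$. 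Distinct classes then arise with bounded multiplicity (controlled by cyclic permutation and the bounded closing data), so this maps almost all elements counted by $\delta_\phi$ to classes counted by $h_\phi$, giving $h_\phi\ge\delta_\phi$ and, with the first part, equality on $W(\Theta)$.

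\textbf{Reverse bound under Zariski density.} For an arbitrary $\phi\in\fa_+^*$ the scalar control above no longer suffices: the closing procedure must now reproduce the whole Cartan vector $\mu(g)\in\fa$, not merely its dominant part. I would invoke Benoist's theorems for Zariski dense groups — the limit cone $\Cone(\Gamma)$ is convex with non-empty interior and Jordan projections are dense in it — to run a vector-valued closing/ping-pong construction producing, for most $g$ with $\phi(\mu(g))\le R$, a conjugacy class $[g']$ with $\mu(g')=\mu(g)+O(1)$ in all of $\fa$, so that $\phi(\lambda(g'))=\phi(\mu(g))+O(1)$. Bounded-multiplicity counting then gives $h_\phi\ge\delta_\phi$ on $\fa_+^*$, and combining with the first part yields equality on $D^*(\Theta)\cap\fa_+^*(\Theta)$.

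\textbf{Main obstacle.} I expect the principal difficulty to be this last vector-valued control: in higher rank one must close a generic element into a loxodromic conjugate whose \emph{full} Jordan projection, not just a single dominant functional, approximates the original Cartan projection, while keeping the multiplicity of the closing map bounded. This is exactly where regularity of the elements (interior of the limit cone) and the density of Jordan projections coming from Zariski density are indispensable. The additive control of the non-dominant (simple-root) directions in the upper bound, which pins down the cone $D^*(\Theta)$, is the secondary technical point.
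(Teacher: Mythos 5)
Your overall architecture is the same as the paper's: choose a well-positioned representative in each conjugacy class, compare its Cartan and Jordan projections up to additive constants, proximalize arbitrary elements by a finite set of multipliers (your ``closing elements'' are exactly the finite set $F$ of the paper's Lemma \ref{lem-AMS guichard wienhard}, resp.\ of Abels--Margulis--Soifer in the Zariski dense case), and count with controlled multiplicity. The two counting directions and the role of $W(\Theta)$ versus $D^*(\Theta)$ are identified correctly. However, there is a genuine gap in your justification of the crux estimate $\phi(\mu(\gamma_0))\le\phi(\lambda(\gamma))+C$. First, an infinite-order element of an Anosov subgroup need not be semisimple (the paper's affine examples $\rho_u(\G)\subset\SL(3,\R)$ contain elements with nontrivial unipotent Jordan part), so there is no ``translation flat'': the displacement function need not attain its infimum, and no point $p$ satisfies $\mu(p,\gamma_0 p)=\lambda(\gamma_0)$, so the basepoint-near-the-min-set mechanism you invoke produces nothing. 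Second, your claim that $\mu(\gamma_0)-\lambda(\gamma)$ stays \emph{bounded} in the root directions of $\Theta$ is false: for a $(\Theta,\epsilon)$-proximal element one has the two-sided bound $|w_\theta(\mu)-w_\theta(\lambda)|\le C$ for fundamental weights, but for simple roots only the one-sided bound $\theta(\mu)\le\theta(\lambda)+C$, because $\mu_2$ can exceed $\lambda_2$ by an arbitrary amount (the restriction of $g$ to its repelling hyperplane can be arbitrarily non-normal, and neither the Anosov condition nor the position of the axis controls this). This asymmetry is exactly why the theorem holds on $D^*(\Theta)$ (non-negative root coefficients, arbitrary weight coefficients) and why the equality is only asserted on $W(\Theta)$; a proof that claims two-sided control in root directions proves too much. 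The paper obtains the correct one-sided statement (Lemma \ref{lem - ComparisonMuLambdaProximalElement}) not from a Morse-type tracking argument but by conjugating the proximal element into standard position by an element of a fixed compact set (Lemma \ref{lem- compactity of loc compact transitive action}) and applying Benoist's additive bound (Proposition \ref{prop - Benoist cartan projection continuous}), together with the elementary inequality $\lambda_1\le\mu_1$ applied to the restriction to the repelling hyperplane.

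Two smaller inaccuracies, both harmless for exponential growth rates but worth correcting: in the non-Zariski-dense case you cannot produce \emph{loxodromic} elements $gc$ (the group may contain none); Lemma \ref{lem-AMS guichard wienhard} only yields $(\Theta,\epsilon)$-proximal ones, which is why the fundamental-weight functionals are the only ones you can control there. And the multiplicity of the map $g\mapsto[g]$ restricted to well-positioned proximal elements is not bounded but grows linearly in $R$ (cyclic permutations of a word of length comparable to $R$, cf.\ Lemma \ref{lem - ProximalConjugacyClass}); a polynomial factor is of course invisible in the $\limsup\frac1R\log$, so the counting still goes through.
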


The conditions on $\phi$ might look exotic, but they will appear naturally in view of Corollary \ref{coro - ComparisonMuLambdaProximalElement}. 

\paragraph{{\it Main example}} Let $\G \subset \SL(n,\R)$ be a projective Anosov subgroup, which is thus $\Theta$-Anosov for $\Theta=\{\alpha_{1,2}, \alpha_{n-1,n}\}$. The fundamental weights associated to $\alpha_{1,2}$ and $\alpha_{1,n}$ are respectively $\epsilon_1$ and $\epsilon_n$. Therefore, $\alpha_{1,2}$ belongs to 
$D^*(\Theta)\cap \fa_+^*(\Theta)$ and $\alpha_{1,n}$ belongs to $W(\Theta)\cap \fa_+^*(\Theta)$. Thus Theorem \ref{theo-GeneralComparisonEntropyCriticalExponent} implies Theorem \ref{theo-specialcaseSL(n,R)}.

\begin{rmk}
The second part of Theorem \ref{theo-GeneralComparisonEntropyCriticalExponent} actually holds as soon as the Zariski closure of\,  $\G$ is semisimple (by simply restricting to the Zariski closure). A typical example where we don't know whether the equality $\delta_{1,2} = h_{1,2}$ holds is a deformation of a projective Anosov subgroup of $\SL(n,\R)$ inside $\mathrm{Aff}(\R^n) \subset \SL(n+1,\R)$.
\end{rmk}


Recall that an element $g\in \SL(n,\R)$ is called proximal if it has an attracting fixed point in $\proj(\R^n)$. Equivalently, $g$ is proximal if it satisfies $\lambda_1(g)> \lambda_2(g)$. The attracting point of $g$ is then the eigenline $L_+(g)$ for the (necessarily real) eigenvalue $\pm e^{\lambda_1(g)}$, and its basin of attraction is the complement of an invariant hyperplane $H_-(g)$. We will need a quantified version of proximality. The following definition is adapted from \cite[Definition 5.6]{guichard2012anosov}. We equip $\proj(\R^n)$ with the round metric induced by the standard scalar product of $\R^n$ and denote by $d_\proj$ the associated distance. If $L$ is a line in $\R^n$ and $H$ a linear hyperplane, we denote by $d_\proj(L,H)$ the distance between $L$ (seen as a point in $\proj (\R^n)$) and $H$ (seen as a projective hyperplane). If $H,H'$ are two projective hyperplanes, we denote by $d_\proj(H,H')$ their Hausdorff distance.

\begin{defi} \label{def - proximality}
Given $r>0$ and $0<\epsilon<1$, a matrix $g\in \SL(n,\R)$ is called $(r,\epsilon)$-proximal if it is proximal and, moreover, $g$ is $\epsilon$-Lipschitz on the ball $B(L_+(g),r)$.

If $\Theta$ is a subset of $\Delta_s$, we say that $g\in G$ is \emph{$(\Theta,r,\epsilon)$-proximal} if $\rho_\theta(g)$ is $(r,\epsilon)$-proximal for all $\theta\in \Theta$. Finally, we says that $g$ is \emph{$(r,\epsilon)$-loxodromic} if $g$ is $(\Delta_s,r,\epsilon)$-proximal.
\end{defi}

Note that, if $g$ is $(r,\epsilon)$-proximal, then $d(L_+(g),H_-(g)) >r$ and
\[\Vert \mathrm d_{L_+(g)}g\Vert \leq \epsilon~,\]
where $\mathrm d_xg$ is the derivative of $g$ at $x\in \mathbf P(\R^n)$.

We need to compare the Cartan and Jordan projections of proximal elements, this will be the purpose of Lemma \ref{lem - ComparisonMuLambdaProximalElement}. We will use the following elementary topological result: 
\begin{prop}\label{lem- compactity of loc compact transitive action}
Let $G$ be a locally compact group acting transitively on a Hausdorff space $X$ and let $x$ be a point in $X$. Then for all compact subset $M$ of $X$ there exists a compact set $M'$ of $G$ such that 
$M\subset M' \cdot x $. 
\end{prop}

\begin{proof}
Let $M_0$ be a compact neighborhood of the identity in $G$. Since $G$ acts transitively on $X$, $\bigcup_{g\in G} g\overset{\circ}{M}_0 \cdot x \supset M$. By compactness of $M$ we can extract a finite cover, $M\subset \bigcup_{i\in \{1, m\}} g_i \overset{\circ}{M}_0 \cdot x $, for some $g_i \in G$.

Then $M' = \bigcup_{i\in \{1, m\}} g_i M_0$ fulfills the conclusion of the Lemma. 
\end{proof}

\begin{lemme} \label{lem - ComparisonMuLambdaProximalElement}
Let $\theta$ be a simple root of $G$. Then for any $g\in G$, we have
\[w_\theta(\lambda(g)) \leq w_\theta(\mu(g))~.\]
Moreover, for every $r>0$, there exists $\epsilon >0$ and a constant $C$ such that, if $g$ is $(\theta,r,\epsilon)$-proximal, then
\[w_\theta(\mu(g)) \leq w_\theta(\lambda(g)) + C\]
and
\[ \theta(\mu(g)) \leq \theta(\lambda(g)) + C~.\]
\end{lemme}
Taking linear combinations of $\theta$ and $w_\theta$ for $\theta \in \Theta$, we deduce the following:
\begin{coro} \label{coro - ComparisonMuLambdaProximalElement}
    Let $\Theta$ be a subset of $\Delta_s$ and $\phi \in D^*(\Theta)$. Then for every $r>0$, there exists $\epsilon, C>0$ such that, if $g$ is $(\Theta,r,\epsilon)$-proximal, then
    \[\phi(\mu(g)) \leq \phi(\lambda(g)) + C~.\]
    If, moreover, $\phi \in W(\Theta)$, then there exists $\epsilon, C>0$ such that, if $g$ is $(\Theta,r,\epsilon)$-proximal, then
    \[\vert \phi(\mu(g)) -\phi(\lambda(g))\vert \leq C~.\]
\end{coro}

\begin{proof}[Proof of Lemma \ref{lem - ComparisonMuLambdaProximalElement}]
Taking the fundamental linear representation $\rho_\theta$, it is sufficient to prove the inequalities for $g\in \SL(n,\R)$ and $\theta = \alpha_{1,2}$. 

For the first inequality, note that
\begin{eqnarray*}
\mu_1(g) &=& \log \sup_{u\in \C^n\backslash \{0\}} \frac{\Vert g u \Vert}{\Vert u \Vert}\\
& \geq & \log \frac{\Vert g u_1 \Vert}{\Vert u_1 \Vert} = \lambda_1(g)~,
\end{eqnarray*}
where $u_1$ is an eigenvector for the eigenvalue of $g$ of highest module.

We conclude that
\begin{eqnarray*}
w_\theta(\mu(g)) &=& \mu_1(g)\\
& \geq & \lambda_1(g) = w_\theta(\lambda(g))~.
\end{eqnarray*}

Now, fix $r>0$. The subset of $M$ of $\proj(\R^n)\times \proj({\R^n}^*)$ consisting of pairs of a line $L$ and a hyperplane $H$ such that $d(L , H) \geq r$ is a compact subset of the set of pairs $(L,H)$ which are in general position. Since $\SL(n, \R) $ is locally compact and acts transitively on this latter set, by Proposition \ref{lem- compactity of loc compact transitive action}, there exists a compact set $M'\subset \SL(n,\R)$ such that $M \subset M'\cdot ([e_1], e_1^\perp)$. By compactness, there exists a constant $C$ such that the action of every $m\in M'$ on $\proj(\R^n)$ is $C$-bilipschitz. 

Choose $\epsilon < \frac{1}{C^2}$ and let $g\in \SL(n,\R)$ be $(r,\epsilon)$-proximal. Choose $m\in M'$ such that $h=m^{-1}gm$ satisfies
\[\left\{\begin{array}{l}
L_+(h) = m^{-1} L_+(g) = [e_1]\\
H_-(h) = m^{-1} \cdot H_-(g) = e_1^\perp
\end{array}
\right.\]
Then $h$ is $(\frac{r}{C}, C^2 \epsilon)$-proximal. In particular,
$\Vert \mathrm d_{h^+}(h) \Vert \leq C^2 \epsilon < 1$. 

Since $L_+(h)$ and $H_-(h)$ are orthogonal, we get that \[\mu_1 (h)= \lambda_1(h) = \lambda_1(g)\] and, denoting $\hat h$ the restriction of $h$ to $e_1^\perp$,
\[\mu_2(h) = \mu_1(\hat h) \geq \lambda_1(\hat h) = \lambda_2(h) = \lambda_2(g)~.\]
Hence
\[(\mu_1-\mu_2)(h) \leq (\lambda_1-\lambda_2)(g)~.\]

Finally, by Proposition \ref{prop - Benoist cartan projection continuous}, there is a constant $D$ (depending only on $M'$) such that
\[|\mu_1(h) - \mu_1(g)| < D\]
 and
 \[|\mu_2(h) - \mu_2(g)| < D~.\]
 It follows that $(\mu_1-\mu_2)(g)\leq (\lambda_1-\lambda_2)(g)+2D$. We also have \[\mu_1(g) \leq \mu_1(h)+D=\lambda_1(h)+D=\lambda_1(g)+D~.\]
\end{proof}


A result of Abels--Margulis--Soifer ensures that in a Zariski dense subgroup of a semisimple linear group, it is possible to make all elements $(r,\epsilon)$-loxodromic up to left multiplication by a finite set. 
\begin{lemme}[{\cite[Theorem 6.8]{abels1995semigroups}}]\label{lem-AMS}
Let $\Gamma$ be a Zariski dense subgroup of $G$. Then there exists $r>0$ such that, for any $\epsilon>0$ there is a finite subset $F$ of $\Gamma$ such that, for every $\gamma \in \Gamma$, there exists $f\in F$ such that $f\gamma$ is $(r,\epsilon)$-loxodromic.
\end{lemme}

If $\Gamma$ is not assumed to be Zariski dense, then it may not contain loxodromic elements. However, if $\Gamma$ is $\Theta$-Anosov, it certainly contains $\Theta$-proximal elements, and we have the analogous statement:

\begin{lemme}[{\cite[Theorem 5.9]{guichard2012anosov}}]\label{lem-AMS guichard wienhard}
Let $\Gamma$ be a (not necessarily Zariski dense) $\Theta$-Anosov subgroup of $G$. Then there exists $r>0$ such that, for all $\epsilon>0$, there is a finite subset $F$ of $\Gamma$ such that, for every $\gamma \in \Gamma$, there exists $f\in F$ such that $f\gamma$ is $(\Theta,r,\epsilon)$-proximal.
\end{lemme}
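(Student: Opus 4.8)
The plan is to reconstruct the Abels--Margulis--Soifer mechanism using the boundary maps of the Anosov structure in place of Zariski density. For $g\in\SL(m,\R)$ with $\mu_1(g)>\mu_2(g)$, write its Cartan decomposition $g=k\exp(\mu(g))k'$ and set $U^+(g)=k\cdot[e_1]\in\proj(\R^m)$ (the most expanded line) and $U^-(g)=(k')^{-1}\cdot\mathrm{span}(e_2,\dots,e_m)\in\proj({\R^m}^*)$ (the repelling hyperplane). I would first record the standard quantitative proximality criterion: for every $\epsilon>0$ there are $\delta>0$ and $R>0$ such that any $g$ with $\mu_1(g)-\mu_2(g)\geq R$ and angle $d(U^+(g),U^-(g))\geq\delta$ is $\epsilon$-proximal, with attracting fixed point within $O(e^{-(\mu_1(g)-\mu_2(g))})$ of $U^+(g)$. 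Applying this to each $\rho_\theta$, it suffices to produce a finite $F$ and a uniform $\delta$ so that, for every $\gamma$, some $f\gamma$ satisfies the gap and angle conditions in every $\rho_\theta$, $\theta\in\Theta$, simultaneously.

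The gap condition is automatic for long elements: the $\Theta$-Anosov hypothesis gives $\mu_1(\rho_\theta(\gamma))-\mu_2(\rho_\theta(\gamma))=\theta(\mu(\gamma))\geq C|\gamma|-C'$, so $\theta(\mu(f\gamma))\geq C(|\gamma|-\max_{f\in F}|f|)-C'\to+\infty$. The real work is the angle condition. Here I would use two facts. First, left multiplication by a fixed $f$ acts on the Cartan flags in the limit: since $e^{-\mu_1}\rho_\theta(\gamma)$ lies within $O(e^{-\theta(\mu(\gamma))})$ of the rank-one map with image $U^+(\rho_\theta\gamma)$ and kernel $U^-(\rho_\theta\gamma)$, one gets that $U^+(\rho_\theta(f\gamma))$ lies within $O(e^{-\theta(\mu(\gamma))})$ of $\rho_\theta(f)\cdot U^+(\rho_\theta\gamma)$, while $U^-(\rho_\theta(f\gamma))$ stays within the same order of $U^-(\rho_\theta\gamma)$. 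Second, the attracting line and repelling hyperplane are approximated by the boundary maps: there is a single point $\eta^+_\gamma\in\partial_\infty\G$ (the endpoint of a geodesic ray from $e$ through $\gamma$) and a point $\eta^-_\gamma\in\partial_\infty\G$ such that, for all $\theta\in\Theta$ simultaneously, $d(U^+(\rho_\theta\gamma),\xi_\theta(\eta^+_\gamma))$ and $d(U^-(\rho_\theta\gamma),\xi^*_\theta(\eta^-_\gamma))$ are $o(1)$ as $|\gamma|\to\infty$. Combining these with equivariance $\rho_\theta(f)\xi_\theta(\eta)=\xi_\theta(f\cdot\eta)$ and the transversality $\xi_\theta(a)\subset\xi^*_\theta(b)\Leftrightarrow a=b$ of Theorem \ref{t:BoundaryMapsAnosov}, the angle $d(U^+(\rho_\theta(f\gamma)),U^-(\rho_\theta(f\gamma)))$ is, up to an error tending to $0$ with $|\gamma|$, bounded below as soon as $f\cdot\eta^+_\gamma\neq\eta^-_\gamma$; by continuity of $\xi_\theta,\xi^*_\theta$ on the compact $\partial_\infty\G$, a separation $d(f\eta^+_\gamma,\eta^-_\gamma)\geq\rho_0$ forces a uniform angle $\geq\delta>0$ for every $\theta\in\Theta$ at once.

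It then remains to find a finite $F_0\subset\G$ separating the boundary, in the sense that $\inf_{\eta\in\partial_\infty\G}\mathrm{diam}\{f\cdot\eta: f\in F_0\}\geq\rho_1>0$. If $\G$ is non-elementary, its action on $\partial_\infty\G$ has no global fixed point, so for each $\eta$ there are $f_1,f_2\in\G$ with $f_1\eta\neq f_2\eta$; this persists on a neighbourhood by continuity, and a finite subcover of the compact $\partial_\infty\G$ yields $F_0$ and $\rho_1$. Given $\eta^+_\gamma$, two elements of $F_0$ realize distance $\geq\rho_1$ between their images of $\eta^+_\gamma$, so by the triangle inequality some $f\in F_0$ has $d(f\eta^+_\gamma,\eta^-_\gamma)\geq\rho_1/2=:\rho_0$. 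For $\gamma$ with $|\gamma|\geq N_0$ this $f$ makes $f\gamma$ satisfy the gap and angle conditions simultaneously, hence $(\Theta,\epsilon)$-proximal. (When $\G$ is elementary, the endpoints of any $\gamma=g^k$ already satisfy $\eta^+_\gamma\neq\eta^-_\gamma$ by north--south dynamics, so no separation is needed.) Finally, for the finitely many $\gamma$ with $|\gamma|<N_0$, I enlarge $F$ by the elements $p\gamma^{-1}$, where $p\in\G$ is a fixed $(\Theta,\epsilon)$-proximal element (such $p$ exists: large powers of any infinite-order element have large gap and automatically separated endpoints); then $(p\gamma^{-1})\gamma=p$ is $(\Theta,\epsilon)$-proximal.

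The main obstacle is the second convergence statement of the second paragraph: that the Cartan attracting line and repelling hyperplane of $\rho_\theta(\gamma)$ are approximated by $\xi_\theta$ and $\xi^*_\theta$ at a \emph{common} abstract boundary point $\eta^\pm_\gamma$, independent of $\theta$, uniformly in $\gamma$. This compatibility across the fundamental representations is precisely what allows a single multiplier $f$ to control all $\theta\in\Theta$ at once, and it is where the genuine $\Theta$-Anosov hypothesis is used (rather than a mere unrelated collection of projective Anosov conditions); it follows from the uniform comparison between word length and Cartan projection together with the continuity and transversality of the boundary maps.
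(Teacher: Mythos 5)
First, a point of comparison: the paper does not prove this lemma at all --- it is quoted verbatim from Guichard--Wienhard \cite[Theorem 5.9]{guichard2012anosov}. So your proposal cannot be matched against an internal argument; it has to be judged as a proof in its own right. Its architecture is the standard one for such Abels--Margulis--Soifer-type statements in the Anosov setting: a quantitative proximality criterion (large gap $\mu_1-\mu_2$ together with a definite angle between the Cartan attractor $U^+$ and the Cartan repeller $U^-$ forces $\epsilon$-proximality), the gap being automatic from the Anosov inequality and Proposition \ref{prop - Benoist cartan projection continuous}, the angle being produced by transversality of the boundary maps, a finite separating set $F_0$ obtained from compactness of $\partial_\infty \G$ and the absence of a global fixed point, and a separate patch for the finitely many short elements. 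The steps you actually carry out are correct: the perturbation statement $U^+(\rho_\theta(f\gamma))\approx \rho_\theta(f)\cdot U^+(\rho_\theta(\gamma))$ and $U^-(\rho_\theta(f\gamma))\approx U^-(\rho_\theta(\gamma))$ (two nearby rank-one maps of comparable norm have nearby images and kernels), the triangle-inequality trick producing $f\in F_0$ with $d(f\eta^+_\gamma,\eta^-_\gamma)\geq \rho_0$, and the reduction of the angle bound to transversality plus compactness of $\partial_\infty\G\times\partial_\infty\G$.

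The gap is exactly where you locate it, but it is more serious than you concede. The claim that the Cartan flags of $\rho_\theta(\gamma)$ are uniformly approximated by the boundary maps evaluated at boundary points $\eta^\pm_\gamma$ (the so-called Cartan property of Anosov representations) is a genuine theorem, and it does \emph{not} ``follow from the uniform comparison between word length and Cartan projection together with the continuity and transversality of the boundary maps''. Theorem \ref{t:BoundaryMapsAnosov} gives existence, equivariance, transversality, and the eigenvector property for infinite-order elements; none of this says anything about the Cartan decomposition of an arbitrary group element, and passing from eigenflags of individual elements to Cartan flags of arbitrary sequences requires a Morse-lemma-type argument. This is precisely what is established, with real work, in \cite{gueritaud20017anosov} (where Anosov subgroups are characterized as uniformly dynamics-preserving, the boundary maps arising as limits of Cartan attractors) and in \cite{kapovich2017anosov} via the higher-rank Morse lemma. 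Your argument becomes complete and correct if that one-line justification is replaced by a citation of this property; as written, it is circular at its declared crux. Two minor repairs: in the two-ended case not every element is a power of a fixed $g$, so you should write the group as finitely many cosets $c_i\langle g\rangle$ and put the $c_i^{-1}$ into $F$; and the cross-$\theta$ compatibility you stress in your last paragraph is actually automatic --- once each $\rho_\theta$ has the Cartan property, the approximating point is the endpoint of a ray in the common Gromov boundary, so the entire difficulty sits in the Cartan property itself, not in the compatibility across $\Theta$.
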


We will also need to control the number of $\Theta$-proximal elements in a conjugacy class.

Let $\vert \cdot \vert$ be the word length on $\Gamma$ associated to a finite set of generators and let $d_\infty$ be a distance on $\Gamma \cup \partial_\infty \Gamma$ inducing the Gromov topology. For every $[\gamma] \in [\Gamma]$, define
\[l([\gamma]) = \inf_{\gamma'\in [\gamma]} \vert \gamma'\vert~.\]
We call an element $\gamma \in \Gamma$ \emph{$\eta$-hyperbolic} if $\gamma$ has an attracting fixed point $\gamma_+$ and a repelling fixed point $\gamma_-$ in $\partial_\infty \Gamma$ such that $d_\infty (\gamma_-,\gamma_+) >\eta$. We have the following properties of $\eta$-hyperbolic elements:

\begin{prop} \label{prop-Properties hyperbolic elements}
 Let $\G$ be a Gromov hyperbolic group.   For every $\eta, \eta'>0$, there exist constants $C,C'>0$ such that
    \begin{itemize}
    \item every $\eta$-hyperbolic element $\gamma \in \Gamma$ satisfies
    \[\vert \gamma \vert \leq l([\gamma]) + C~;\]
    \item if, moreover, $\vert \gamma \vert \geq C'$, then $d_\infty(\gamma, \gamma_+) <\eta'$.
    \end{itemize}
\end{prop}

\begin{proof}
By construction of the Gromov topology of $\partial_\infty \Gamma$, there is a constant $C$ such that every bi-infinite geodesic in the Cayley graph of $\G$ with endpoints $x,y$ such that $d_\infty(x,y)>\eta$ is at distance at most $\frac{C}{2}$ from the origin $\mathbf 1_\G$ (see for instance \cite[Lemma 7.1]{CK}).

Reformulated in terms of word length, it shows the existence of a constant $C>0$ such that, for any $\eta$-hyperbolic element $\g\in \G$ there exits $m\in \G$ with $\vert m \vert \leq \frac C 2$ and $\vert m\g m^{-1}\vert = \ell([\g])$. We deduce that 
\[\vert \g \vert \leq \ell([\g])+ C~.\]

Now let $(\g_n)_{n\in\mathbb N}$ be a sequence of $\eta$-hyperbolic elements with $\vert \g_n \vert \underset{n\to +\infty}{\longrightarrow} +\infty$. Let $({\gamma_n}_-, {\gamma_n}_+)$ be a geodesic axis for $\gamma_n$, and let $[\mathbf 1_\G, {\gamma_n}_+)$ be a geodesic ray from $\mathbf 1_\G$ to ${\gamma_n}_+$. Since $d_\G(\mathbf 1_\G, ({\gamma_n}_-, {\gamma_n}_+))$ is uniformly bounded (where $d_\G$ is the word metric $d_\G(x,y)=\vert xy^{-1}\vert$ for $x,y\in \G$), so are the distances $d_\G(\gamma_n, ({\gamma_n}-, {\gamma_n}_+))$ and $d_\G([\mathbf 1_\G, {\gamma_n}_+), ({\gamma_n}_-, {\gamma_n}_+))$. We deduce that $d_\G(\gamma_n, [\mathbf 1_\G, {\gamma_n}_+))$ is uniformly bounded and, since $\vert \gamma_n \vert \underset{n\to +\infty}{\longrightarrow} +\infty$, we conclude that \[d_\infty(\gamma_n, {\gamma_n}_+)\underset{n\to +\infty}{\longrightarrow} 0~.\]
This convergence is uniform in $\vert \gamma \vert$ by compactness of $\Gamma \cup \partial_\infty \Gamma$. 
\end{proof}

The following proposition will allow us to reduce the counting of $(\Theta,r,\epsilon)$-proximal elements to a counting intrinsic to Gromov hyperbolic groups. 

\begin{prop} \label{prop - Proximal VS Hyperbolic}
Let $\Gamma$ be a $\Theta$-Anosov subgroup of $G$. For all $\eta,\epsilon>0$, there exists $r>0$ and a constant $D>0$ such that, if $\gamma \in \Gamma$ is $\eta$-hyperbolic and $\vert \gamma \vert \geq D$, then $\gamma$ is $(\Theta,r,\epsilon)$-proximal.

Conversely, for all $r>0$, there exists $\eta>0$ such that every $(\Theta,r,\epsilon)$-proximal element $\gamma$ (for arbitrary $\epsilon$) is $\eta$-hyperbolic.
\end{prop}

\begin{proof}
By Corollary \ref{coro - anosov is projective anosov}, one can assume that $\Gamma$ is a projective Anosov subgroup of $\SL(n,\R)$. Let $\xi:\partial_\infty \Gamma \to \proj(\R^n)$ and $\xi^*: \partial_\infty \Gamma \to \proj({\R^n}^*)$ be the corresponding boundary maps.

Fix $\eta$ and $\epsilon>0$. The set $ B_1 =\{(x,y)\in \partial_\infty \Gamma \times \partial_\infty \Gamma \mid d_\infty(x,y) \geq \eta\}$ is compact and does not intersect the diagonal. By continuity and transversality of the map $(\xi,\xi^*)$, there exists $r>0$ such that for all $(x,y)\in B_1$, we have $d_\proj(\xi(x), \xi^*(y))> 3r$. In particular, $d_\proj(\xi(\g_+), \xi^*(\g_-)) \geq 3r$ for every $\eta$-hyperbolic element $\g$.

For $g\in \SL(n,\R)$ with $\mu_1(g) > \mu_2(g)$, write its Cartan decomposition $g= k_1 e^{\mu(g)} k_2$ and set $V_-(g) = k_2^{-1}e_1^\perp$. One can show the existence of a constant $D_1$ (depending only on $r$) such that the projective action of $g$ is $D_1 e^{\mu_2(g)-\mu_1(g)}$-Lipschitz on the set $\{v \in \proj(\R^n) \mid d_\proj(v, V_-(g)) \geq r\}$.

Rewriting the dynamics preserving property of the boundary maps $\xi$ and $\xi^*$ (see Theorem \ref{theo - AnosovLimitMaps}), we get the existence of some $\eta'>0$ (depending on $r$) such that, if $\g \in \G$ and $x\in \partial_\infty \G$ satisfy $d_\infty (g^{-1},x)\leq \eta'$, then $d_\proj(V_-(g), \xi^*(x)) < r$. By Proposition \ref{prop-Properties hyperbolic elements}, there exists a constant $D_2$ such that every $\eta$-hyperbolic element $\gamma \in \Gamma$ with $\vert \gamma \vert \geq D_2$ satisfies $d_\infty(\gamma^{-1},\gamma_-) <\eta'$.

Let $(\alpha, A)$ be such that $(\mu_1-\mu_2)(\g)\geq \alpha \vert \g \vert -A$ for all $\g\in\G$. Finally, choose $D = \max(D_2,\frac{-\log(\epsilon)+A+\log(D_1)}{\alpha})$. Let $\gamma$ be $\eta$-hyperbolic with $\vert \gamma \vert \geq D$. Then:
\begin{itemize}
\item $d_\proj(\xi(\gamma_+),\xi^*(\gamma_-)) > 3r$ by construction of $r$,

\item $D_1 e^{(\mu_2-\mu_1)(\gamma)} \leq D_1 e^{A-\alpha D}\leq \epsilon$, which means that $\gamma$ is $\epsilon$-Lipschitz on the set $\{v\mid d_\proj (v, V_-(\gamma))\geq r\},$
\item $d_\infty(\gamma, \gamma_+) \leq \eta'$ since $D\geq D_2$, hence $d_\proj(V_-(\gamma), \xi^*(\gamma_-)) \leq r$, which shows that $d_\proj(\xi(\gamma_+), V_-(\gamma))>2r$ and $\{v\mid d_\proj(v, V_-(\g))\geq r\} \supset B(\xi(\gamma_+),r)$.
\end{itemize}
We conclude that $\gamma$ is $(r,\epsilon)$-proximal.

Conversely, fix $r>0$. Then the set $B_2 = \{(x,y)\in \partial_\infty \Gamma \times \partial_\infty \Gamma \mid d_\proj(\xi(x),\xi^*(y)) \geq r\}$ is compact by continuity of $(\xi,\xi^*)$ and does not intersect the diagonal (since $\xi(x)\in \xi^*(x)$ for all $x$). Hence there exists $\eta>0$ such that $d_\infty(x,y)\geq \eta$ for all $(x,y)\in B$. In particular, every $(r,\epsilon)$-proximal element is $\eta$-hyperbolic.
\end{proof}

Now, the following lemma controls the number of $\eta$-hyperbolic elements in a given conjugacy class:

\begin{lemme}[See {\cite[Paragraph 7]{CK}}] \label{lem - HyperbolicConjugacies}
There exists $\eta_0>0$ such that every conjugacy class of an infinite order element $[\g]\in [\G]$ contains at least one $\eta_0$-hyperbolic element. 

Moreover, for every $\eta>0$, there exists a constant $C$ such that any conjugacy class $[\g]$ of infinite order contains at most $C l([\g])$ elements that are $\eta$-hyperbolic.
\end{lemme}

\begin{coro} \label{lem - ProximalConjugacyClass}
Let $\Gamma$ be a $\Theta$-Anosov subgroup of $G$. Then there exists $r>0$ such that, for any $\epsilon>0$, there exists $D>0$ such that every conjugacy class of an infinite order element $[\g]\in [\Gamma]$ with $l([\g]) \geq D$ contains at least one $(\Theta,r,\epsilon)$-proximal element.

Moreover, for any $r,\epsilon>0$, there exists a constant $C$ such that any conjugacy class $[\g]$ of an infinite order element contains at most $C l([\g])$ elements that are $(\Theta,r,\epsilon)$-proximal.
\end{coro}

\begin{proof}

Let $\eta_0$ be such that Lemma \ref{lem - HyperbolicConjugacies} applies. By Proposition \ref{prop - Proximal VS Hyperbolic},  there exists $r>0$ such that, for every $\epsilon>0$ there is a $D>0$ such that $\g\in \G$ is $(\Theta,r,\epsilon)$-proximal whenever $\g$ is $\eta_0$-hyperbolic and $\vert \g \vert \geq D$. Let $[\g]$ be the conjugacy class of an infinite order element such that $l([\g]) \geq D$. By Lemma \ref{lem - HyperbolicConjugacies}, $[\g]$ contains an $\eta_0$-hyperbolic element $\g$ which satisfies $\vert \g \vert \geq l([\g])\geq D$. By construction of $D$ and $r$, this element $\g$ is $(\Theta,r,\epsilon)$-proximal. 

Conversely, for any $r,\epsilon>0$, by Proposition \ref{prop - Proximal VS Hyperbolic} there exists $\eta >0$ such that $\g$ is $\eta$-hyperbolic whenever $\g$ is $(\Theta,r,\epsilon)$-proximal. We thus get
\begin{align*}\Card \{\g' \in [\g]\mid \g' \textrm{ $(\Theta,r,\epsilon)$-proximal}\} &\leq \Card \{\g' \in [\g] \mid \g' \textrm{ $\eta$-hyperbolic}\} \\
& \leq C l([\g])
\end{align*}
where the constant $C$ is given by Lemma \ref{lem - HyperbolicConjugacies}.
\end{proof}

We now have all the tools to prove Theorem \ref{theo-GeneralComparisonEntropyCriticalExponent}.

\begin{proof}[Proof of Theorem \ref{theo-GeneralComparisonEntropyCriticalExponent}]

Let $\varphi$ be an element of $D^*(\Theta)\cap \fa_+^*(\Theta)$. Since $\Gamma$ is $\Theta$-Anosov and $\phi \in \fa_+^*(\Theta)$, there exist $\alpha, A>0$ such that for all $\g \in \G$, 
\[\alpha \vert \g \vert - A \leq \phi(\mu(\g)) \leq \frac{1}{\alpha}\vert \g \vert + A\]
and
\[\alpha l([\g]) - A \leq \phi(\lambda(\g)) \leq \frac{1}{\alpha}l([\g]) + A~.\\ \]

We first prove the inequality $h_\varphi \leq \delta_\varphi$, then the reverse inequality.


Fix $r>0$ given by Corollary \ref{lem - ProximalConjugacyClass}, then $\epsilon>0$ given by Corollary \ref{coro - ComparisonMuLambdaProximalElement}, then $D$ as in \ref{lem - ProximalConjugacyClass}. With these choices, every conjugacy class $[\gamma]$ with $\phi(\lambda(\g)) \geq \frac{D-A}{\alpha}$ contains at least one $(\Theta,r,\epsilon)$-proximal element.

Define
\[\Gamma_\phi(R) =\{\g \in \G \mid \phi(\mu(\gamma)) \leq R\}~.\]
For every $R>0$ and every conjugacy class $[\g] \in [\G]$ of infinite order such that $\frac{D-A}{\alpha}\leq \varphi(\lambda(\g)) \leq R$, we can find $\g' \in [\g]$ which is $(\Theta,r,\epsilon)$-proximal by Corollary~\ref{lem - ProximalConjugacyClass}. By Corollary \ref{coro - ComparisonMuLambdaProximalElement}, since $\varphi \in D^*(\Theta)$, there exists a constant $C$ (independent of $R$) such that $\gamma' \in \Gamma_\phi(R+C)$. We deduce that
\[\Card \Gamma_\phi(R+C) \geq \Card \{[\g] \in [\Gamma] \textrm{ of infinite order } \mid \frac{D-A}{\alpha}\leq \varphi(\lambda(\g))\leq R\}~.\]
The inequality $\delta_\varphi \geq h_\varphi$ easily follows.


To show $\delta_\varphi \leq h_\varphi$, assume first that $\G$ is Zariski dense. Take $r>0$ given by Abels--Margulis--Soifer's Lemma \ref{lem-AMS}, then $\epsilon$ such that Lemma \ref{coro - ComparisonMuLambdaProximalElement} applies to $(r,\epsilon)$-loxodromic elements. Lemma \ref{lem-AMS} then gives us a finite subset $F\subset \G $ such that for any $\g\in \G$, there exists $f\in F$ such that $f\g$ is $(r,\epsilon)$-loxodromic. By Proposition \ref{prop - Benoist cartan projection continuous} and Corollary \ref{coro - ComparisonMuLambdaProximalElement}, there is a constant $C'$ depending on $F, r$ and $\epsilon$ such that $|\varphi(\lambda(f\g)) - \varphi(\mu(\g))|\leq C'$. We thus have
\[
\Card\Gamma_\phi(R) \leq \Card(F) \cdot \Card\{ \g \in \Gamma \mid \phi(\lambda(\g)) \leq R+C', \textrm{ $\g$ is $\epsilon$-loxodromic}\}.\]
Now, by Corollary \ref{lem - ProximalConjugacyClass}, every conjugacy class $[\g]$ such that $\phi(\lambda(\gamma)) \leq R+C'$ contains at most $\frac{C(R+A+C')}{\alpha}$ elements that are $(r,\epsilon)$-loxodromic, for the constant $C$ given by Corollary \ref{lem - ProximalConjugacyClass}.

We thus obtain
\[\Card \Gamma_\phi(R) \leq \frac{C(R+A+C')}{\alpha} \Card F \cdot \Card \{[\g]\in [\Gamma] \mid \varphi(\lambda(\g))\leq R+C\}~,\]
from which the inequality $\delta_\varphi \leq h_\varphi$ easily follows.

If $\Gamma$ is not assumed Zariski dense, we can still apply Lemma \ref{lem-AMS guichard wienhard} and choose $r>0$ for which every element $\g\in \G$ can be made $(\Theta,r,\epsilon)$-proximal by multiplying it by an element $f$ in a finite set. By Corollary \ref{coro - ComparisonMuLambdaProximalElement}, we do have $|\phi(\lambda(f \g)) - \phi(\mu(\g))| \leq C'$ provided that $\phi$ belongs to $W(\Theta)$. The rest of the proof works in a similar way and we eventually obtain the inequality $\delta_\phi \leq h_\phi$ for $\phi \in W(\Theta)$.
\end{proof}

\subsubsection{Critical exponent and tensor product}

We conclude this section by discussing the behaviour of the simple root and highest root critical exponents under taking tensor products of representations. 

Given $g\otimes h$ a pure tensor in $\SL(n_1,\R)\otimes \SL(n_2,\R) \subset \SL(n_1n_2,\R)$, one easily verifies the following identities:
\begin{equation} \label{eq - Highest weight tensor product}
\mu_1(g\otimes h) - \mu_{n_1n_2}(g\otimes h) = \mu_1(g) - \mu_{n_1}(g) + \mu_1(h) - \mu_{n_2}(h)~,\end{equation} 
\begin{equation} \label{eq - Simple weight tensor product}
\mu_1(g\otimes h) - \mu_{2}(g\otimes h) = \inf \{\mu_1(g) - \mu_2(g), \mu_1(h) - \mu_{2}(h)\}~.\end{equation}

Now, let $\Gamma$ be a finitely generated group and $\rho_1: \Gamma \to \SL(n_1,\R)$ and $\rho_2: \Gamma \to \SL(n_2,\R)$ two discrete and faithful representations. From \eqref{eq - Highest weight tensor product}, one obtains the following

\begin{prop} \label{prop - Critical exp HW tensor product}
The highest weight critical exponent of $\rho_1\otimes \rho_2$ satisfies
\[\delta_{1,n_1n_2}(\rho_1\otimes \rho_2)\leq \frac{\delta_{1,n_1}(\rho_1) \delta_{1,n_2}(\rho_2)}{\delta_{1,n_1}(\rho_1) +\delta_{1,n_2}(\rho_2)}~.\]
\end{prop}

\begin{proof}
Take $s> \frac{\delta_{1,n_1}(\rho_1) \delta_{1,n_2}(\rho_2)}{\delta_{1,n_1}(\rho_1) +\delta_{1,n_2}(\rho_2)}$. Then 
\[\frac{s}{\delta_{1,n_1}(\rho_1)}+ \frac{s}{\delta_{1,n_2}(\rho_2)} > 1~.\]
Hence there exists $p,q>1$ such that
\[\frac{s}{\delta_{1,n_1}(\rho_1)} > \frac{1}{p}~,~\frac{s}{\delta_{1,n_2}(\rho_2)} > \frac{1}{q}~\textrm{and}~ \frac{1}{p}+\frac{1}{q}=1~.\]

Now,
\begin{eqnarray*}
& & \sum_{\gamma\in \G} e^{-s \mu_1(\rho_1\otimes \rho_2(\gamma)) - \mu_{n_1n_2}(\rho_1\otimes \rho_2(\gamma))}\\
&=& \sum_{\gamma \in \Gamma} e^{-s \mu_{n_1}(\rho_1(\gamma))} e^{-s \mu_{n_2}(\rho_2(\gamma))} \quad \textrm{by \eqref{eq - Highest weight tensor product}} \\
&\leq & \left(\sum_{\gamma\in \Gamma} e^{-ps \mu_{n_1}(\rho_1(\gamma))}\right)^{\frac{1}{p}}\left(\sum_{\gamma\in \Gamma} e^{-qs \mu_{n_2}(\rho_2(\gamma))}\right)^{\frac{1}{q}} \quad \textrm{By H\"older's inequality}\\
& < & +\infty \quad \textrm{ since $ps> \delta_{1,n_1}(\rho_1)$ and $qs > \delta_{1,n_2}(\rho_2)$.}
\end{eqnarray*}
Hence $s> \delta_{1,n_1n_2}(\rho_1\otimes \rho_2)$.
\end{proof}

\begin{rmk}
 Proposition \ref{prop - Critical exp HW tensor product} is sharp: it is an equality when the highest weight length spectra of $\rho_1$ and $\rho_2$ are proportional, which happens for instance when $\rho_1$ and $\rho_2$ are tensor powers of the same linear representation.
\end{rmk}

\begin{prop} \label{prop - Critical exp SW tensor product}
The simple weight critical exponent of $\rho_1\otimes \rho_2$ satisfies
\[\delta_{1,2}(\rho_1\otimes \rho_2) = \max \{\delta_{1,2}(\rho_1), \delta_{1,2}(\rho_2)\}~.\]
\end{prop}

\begin{proof}
By \eqref{eq - Simple weight tensor product}, we have
\[\Card \{\gamma \in \G \mid \mu_1(\rho_i(\gamma))-\mu_2(\rho_i(\gamma)) \leq R\} \leq \Card \{\g \in \G \mid (\mu_1 - \mu_2)(\rho_1\otimes \rho_2(\g)) \leq R\}\]
for $i=1,2$, from which we deduce that
\[\delta_{1,2}(\rho_i) \leq \delta_{1,2}(\rho_1\otimes \rho_2)\] for $i= 1,2$. On the other hand, we have
\begin{align*}\Card \{\g \in \G \mid (\mu_1 - \mu_2)(\rho_1\otimes \rho_2(\g)) \leq R\} &\leq \Card \{\g \in \G \mid (\mu_1-\mu_2)(\rho_1(\g)) \leq R\}\\ &~~~~ + \Card \{\g \in \G \mid (\mu_1-\mu_2)(\rho_2(\g)) \leq R\}~ \end{align*}
from which we get
\[\delta_{1,2}(\rho_1\otimes \rho_2(\Gamma)) \leq \max \{\delta_{1,2}(\rho_1), \delta_{1,2}(\rho_2)\}~.\]
\end{proof}

Applying this to the tensor product of two Fuchsian representations, we obtain Theorem \ref{theo - simple root exponent tensor product Fuchsian}:

\begin{proof}[Proof of Theorem \ref{theo - simple root exponent tensor product Fuchsian} (Assuming Corollary \ref{coro-NotConvexCocompact})]
Let $\Gamma$ be the fundamental group of a closed surface and $j_1$ and $j_2$ be two Fuchsian representations of $\Gamma$ into $\SL(2,\R)$.

Since the boundary at infinity of $\Gamma$ is a topological circle, the symmetrized limit set $\Lambda_{j_1\otimes j_2}^{\mathrm{sym}}$ of $j_1\otimes j_2(\Gamma)$ in $\proj(\R^4)\times \proj({\R^4}^*)$ has Hausdorff dimension at least~$1$.

Now, it is well-known\footnote{As a consequence for instance of the volume growth of balls in the hyperbolic plane.} that $\delta_{1,2}(j_1) = \delta_{1,2}(j_2) = 1$. Applying Proposition \ref{prop - Critical exp SW tensor product}, we obtain
\[\delta_{1,2}(j_1\otimes j_2) =1~.\]
Using the right inequality in Corollary \ref{coro-NotConvexCocompact}, we conclude that
\[\delta_{1,2}(j_1\otimes j_2) = \DimH(\Lambda_{j_1\otimes j_2}^{\mathrm{sym}}) = 1~.\]
\end{proof}

\subsection{Projectively convex cocompact representations}\label{subsec - projectively cc representations}

We now recall some background on projective convex geometry and its relation to Anosov groups. We refer to \cite{DGK} and \cite{zimmer} for more details.

\subsubsection{Hilbert geometries}

Let us first recall some classical facts on convex subsets of $\proj(\R^n)$ and their Hilbert geometry. The main references for this are Benoist \cite{Benoist2001convexe}, Crampon  \cite{crampon2009entropies,crampon2011dynamics}, Danciger--Guéritaud--Kassel \cite{DGK}. 

An open domain $\Omega$ of $\proj(\R^n)$ is said to be \emph{properly convex} if it is convex and bounded in some affine chart. Hilbert constructed a natural projective invariant distance on a properly convex domain in $\proj(\R^n)$. To define it, let us choose an affine chart in which $\Omega$ is bounded. Given $u$ and $v$ two points in this affine chart, we denote by $uv$ the length of the segment $[u,v]$.

\begin{defi}
Let $x,y$ be two points in $\Omega$. Let $a$ and $b$ denote respectively the intersections of the half lines $[y,x)$ and $[x,y)$ with $\partial \Omega$. Then the Hilbert distance between $x$ and $y$ is given by
\[d_\Omega(x,y) = \frac{1}{2}\log\left(\frac{xb\cdot ay}{ax \cdot yb}\right)~.\]
\end{defi}

This distance actually does not depend on the chosen affine chart (it is essentially the logarithm of a projective cross-ratio), and if a projective transformation maps $\Omega$ to $\Omega'$, then it induces an isometry between the Hilbert distances of $\Omega$ and $\Omega'$. In particular, the group of projective transformations preserving $\Omega$ acts by isometries for the Hilbert distance.

The Hilbert distance is induced by a Finsler metric for which straight lines are geodesic. We will say that a proper convex domain $\Omega$ is \emph{Gromov hyperbolic} if $(\Omega,d_\Omega)$ is a hyperbolic metric space in the sense of Gromov. This implies in particular that $\Omega$ is strictly convex and has $\mathcal C^1$ boundary. Benoist  \cite{Benoist2001convexe} gave more precise characterizations of Gromov hyperbolic convex sets.

\begin{example} \label{ex:Ellipsoid}
Let $\Omega \subset \proj(\R^n)$ be the set of lines in $\R^n$ in restriction to which the quadratic form
\[\mathbf q :(x_1,\ldots,x_n) \mapsto x_1^2+ \ldots + x_{n-1}^2 - x_n^2\]
is negative. The convex set $\Omega$ identifies with the symmetric space of the group $\SO(n-1,1)$ of linear transformations preserving $\mathbf q$, that is, the hyperbolic space of dimension $n-1$. In that case, the Hilbert distance on $\Omega$ is induced by the $\SO(n-1,1)$-invariant Riemannian metric of constant curvature $-1$. In particular, $\Omega$ is a Gromov hyperbolic convex domain.
\end{example}

\begin{example} \label{ex:ConvexSymmetricSpace}
Let $\Sym^2({\R^k}^*)$ be the space of quadratic forms on $\R^k$ and let $\Omega \subset \proj(\Sym^2({\R^k}^*))$ be the projectivization of the cone of positive definite quadratic forms. Then the group $\SL(k,\R)$ acts transitively on $\Omega$, and $\Omega$ identifies with the symmetric space $\SL(k,\R)/\SO(k)$. In that case, the Hilbert distance on $\Omega$ is related to the Cartan projection in the following way:
\[d_\Omega(x,y) = \epsilon_1(\mu(x,y)) - \epsilon_n(\mu(x,y))~.\]
Note that, for $k\geq 3$, $\Omega$ is not Gromov hyperbolic.
\end{example}

\begin{defi}\label{def - convex cocompact}
Let $\Gamma$ be a discrete subgroup of $\SL(n,\R)$ preserving a proper convex domain $\Omega\subset \proj(\R^n)$. 

We say that $\Gamma$ acts \emph{convex-cocompactly}\footnote{As explained extensively in \cite{DGK}, this naïve notion of projective convex-cocompactness is only robust when $\Omega$ is Gromov hyperbolic, which is our only case of interest here.} on $\Omega$ if there exists a non-empty $\Gamma$-invariant convex subset $C\subset \Omega$ such that $\Gamma$ acts properly discontinuously and cocompactly on $C$. 

We say that $\Gamma$ is \emph{strongly projectively convex-cocompact} if  there is a  Gromov hyperbolic convex domain $\Omega$ on which it acts convex-cocompactly.
\end{defi}

\begin{rmk}
The adjective ``strongly'' is here to distinguish the notion from a weaker notion of convex-cocompactness that includes discrete subgroups that are not hyperbolic.
\end{rmk}

\begin{example}
If $\Gamma \subset \SO(n-1,1)$ is a convex-cocompact group of hyperbolic isometries, then it preserves the convex domain $\Omega \simeq \Hyp^{n-1}$ introduced in Example \ref{ex:Ellipsoid} and acts properly discontinuously on its convex core $C \subset \Omega$. It is thus strongly projectively convex-cocompact.
\end{example}

\begin{theorem}\cite{DGK}
Let $\G$ be a discrete subgroup of $\SL(n,\R)$. If $\G$ is strongly projectively convex-cocompact, then $\G$ is projective Anosov.
\end{theorem}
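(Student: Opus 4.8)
The plan is to verify the Cartan-projection definition directly: I must produce constants $C,C'>0$ with $\mu_1(\gamma)-\mu_2(\gamma)\ge C|\gamma|-C'$ for all $\gamma\in\G$. Write $\Omega$ for the Gromov hyperbolic properly convex domain and $C\subset\Omega$ for the $\G$-invariant convex core on which $\G$ acts properly discontinuously and cocompactly, and fix a basepoint $x_0\in C$. \emph{Step 1 (hyperbolicity and a quasi-isometry).} By the Milnor--\v{S}varc lemma the orbit map $\gamma\mapsto\gamma\cdot x_0$ is a quasi-isometric embedding of $(\G,|\cdot|)$ into $(C,d_H)$; since $(\Omega,d_H)$ is Gromov hyperbolic this also shows $\G$ is word hyperbolic. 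In particular there are $a,b>0$ with $d_H(x_0,\gamma x_0)\ge a|\gamma|-b$. A cross-ratio computation in the Hilbert metric, comparing with the singular values of $\gamma$ acting linearly exactly as in Example~\ref{ex:ConvexSymmetricSpace}, gives $\mu_1(\gamma)-\mu_n(\gamma)\ge C_0|\gamma|-C_0'$. This already controls the Hilbert exponent $\delta_{1,n}$ but is strictly weaker than what is needed, since $\mu_1-\mu_2$ may vanish while $\mu_1-\mu_n$ is large; the extra input of strict convexity and $C^1$ regularity is genuinely required.

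\emph{Step 2 (limit maps).} Using Gromov hyperbolicity I would define $\xi:\partial_\infty\G\to\partial\Omega$ by $\xi(\eta)=\lim_{\gamma\to\eta}\gamma\cdot x_0$; this limit exists, is continuous and equivariant, lands in $\partial\Omega\subset\proj(\R^n)$, and is injective because strict convexity (forced by Gromov hyperbolicity of $d_H$) makes every boundary point an extreme point. Since $\partial\Omega$ is $\mathcal{C}^1$, each boundary point carries a unique supporting hyperplane, so I can set $\xi^*(\eta)\in\proj({\R^n}^*)$ to be the supporting hyperplane at $\xi(\eta)$; this map is continuous and equivariant. The transversality $\xi(\eta)\subset\xi^*(\eta')\iff\eta=\eta'$ is then immediate: for $\eta\ne\eta'$ the extreme point $\xi(\eta)$ cannot lie on the supporting hyperplane at the distinct extreme point $\xi(\eta')$, since that hyperplane meets $\overline\Omega$ only at $\xi(\eta')$.

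\emph{Step 3 (the singular-value gap).} This is the crux. The quantity $\mu_1(\gamma)-\mu_2(\gamma)$ measures how strongly $\gamma$ contracts $\proj(\R^n)$ toward its top singular direction and away from the corresponding hyperplane: $e^{-(\mu_1-\mu_2)(\gamma)}$ is comparable to the Lipschitz contraction factor of $\gamma$ on any compact set kept at definite distance from the repelling hyperplane. I would show that when $d_H(x_0,\gamma x_0)$ is large, $\gamma$ is \emph{uniformly} proximal with attracting point near $\xi(\gamma_+)$ and repelling hyperplane near $\xi^*(\gamma_-)$, and that its contraction factor is bounded by $e^{-c\,d_H(x_0,\gamma x_0)}$. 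The uniformity is the heart of the matter and comes from cocompactness: since $\G$ acts cocompactly on $C$, the moduli of strict convexity and of $\mathcal{C}^1$ regularity of $\partial\Omega$ are uniform along the compact limit set $\xi(\partial_\infty\G)$, so the angle between $\xi(\gamma_+)$ and $\xi^*(\gamma_-)$ stays bounded below and the transverse shrinking of $\overline\Omega$ seen from $\gamma x_0$ is genuinely exponential in \emph{all} transverse directions at a uniform rate. Feeding the resulting estimate $\mu_1(\gamma)-\mu_2(\gamma)\ge c\,d_H(x_0,\gamma x_0)-c'$ into Step 1 yields $\mu_1(\gamma)-\mu_2(\gamma)\ge C|\gamma|-C'$, which is the projective Anosov property.

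The main obstacle is precisely Step 3: passing from the qualitative statement that strict convexity together with $\mathcal{C}^1$ regularity makes each individual element proximal, to the quantitative \emph{uniform} exponential gap expressed in terms of word length. Everything hinges on extracting uniform convexity and regularity estimates from cocompactness of the action on $C$; once these are in hand, the comparison between the convex-geometric contraction factor and the singular-value gap is a linear-algebra computation analogous to Lemma~\ref{lem - ComparisonMuLambdaProximalElement}.
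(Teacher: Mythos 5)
The paper does not actually prove this statement: it is quoted directly from \cite{DGK}, so there is no internal argument to compare with, and your proposal has to stand on its own as a proof --- which it does not. Steps 1 and 2 are fine as outlines (Milnor--\v{S}varc, the additive comparison between $d_H(x_0,\gamma x_0)$ and $\tfrac12(\mu_1-\mu_n)(\gamma)$, and the construction of $\xi,\xi^*$ from strict convexity and $\mathcal{C}^1$ regularity are standard). But Step 3, which you yourself call ``the crux'' and ``the main obstacle'', \emph{is} the content of the theorem, and you do not prove it: ``I would show that\dots'' and ``everything hinges on extracting uniform convexity and regularity estimates from cocompactness'' name the gap rather than close it.

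Worse, the two uniformity claims you propose to use in Step 3 are respectively unjustified and false. First, cocompactness of the action on $C$ does not by itself make the moduli of strict convexity and $\mathcal{C}^1$ regularity of $\partial\Omega$ ``uniform along the limit set'': these moduli are Euclidean quantities in an affine chart, and $\G$ acts by projective transformations that distort them arbitrarily, so transporting an estimate at one boundary point along an orbit is a substantive argument (a real part of the Benoist/Danciger--Gu\'eritaud--Kassel theory), not a formal consequence of compactness of $C/\G$. Second, the claim that the angle between $\xi(\gamma_+)$ and $\xi^*(\gamma_-)$ stays bounded below over all $\gamma$ with $d_H(x_0,\gamma x_0)$ large is wrong: taking high powers $\gamma^N$ of elements whose fixed points $\gamma_\pm$ are close in $\partial_\infty\G$ produces elements of arbitrarily large translation length with $\xi(\gamma_+)$ arbitrarily close to $\xi(\gamma_-)\subset\xi^*(\gamma_-)$, so the angle degenerates. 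Uniform $\epsilon$-proximality fails for a whole Anosov group; this is exactly why the present paper must invoke an Abels--Margulis--Soifer-type statement (Lemma \ref{lem-AMS guichard wienhard}) and select good representatives in each conjugacy class (Lemma \ref{lem - ProximalConjugacyClass}) instead of using all group elements. A correct proof of Step 3 has to work with the Cartan (singular value) frames of $\gamma$ --- e.g.\ via conical convergence of the orbit $\gamma x_0$ to the limit set together with the regularity of $\partial\Omega$ near it --- rather than with a nonexistent uniform angle gap between the eigen-fixed points of $\gamma$.
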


More precisely, we have the following description of the boundary maps $\xi$ and $\xi^*$ associated to $\Gamma$:

\begin{theorem}\cite{DGK} \label{theo - BoundaryMapsConvexCocompact}
Let $\Gamma$ be a discrete subgroup of $\SL(n,\R)$ preserving a Gromov hyperbolic convex domain $\Omega$ and acting properly discontinuously and cocompactly on a non-empty convex set $C\subset \Omega$. Let $\overline{C}$ denote the closure of $C$ in $\proj(\R^n)$. Then $\Gamma$ is projective Anosov and
\begin{itemize}
    \item the boundary map $\xi$ is a homeomorphism from $\partial_\infty \Gamma$ to $\overline{C} \cap \partial \Omega$,
    \item for every $x\in \partial_\infty \Gamma$, $\xi^*(x)$ is the hyperplane tangent to $\partial \Omega$ at $\xi(x)$.
\end{itemize}
\end{theorem}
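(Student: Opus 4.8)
The plan is to establish that $\G$ is projective Anosov via the preceding theorem, and then to identify the boundary maps $\xi, \xi^*$ explicitly by exploiting the dynamics of the $\G$-action on $\overline{C}$. Since $\G$ acts cocompactly on the convex set $C \subset \Omega$, the orbit of any base point $x_0 \in C$ is a quasi-isometric copy of $\G$ inside $(\Omega, d_H)$. As $\Omega$ is Gromov hyperbolic, the orbit map extends continuously to a map on Gromov boundaries, giving a $\G$-equivariant homeomorphism $\partial_\infty \G \to \Lambda$, where $\Lambda \subset \partial \Omega$ is the limit set. First I would check that this limit set is exactly $\overline{C} \cap \partial\Omega$: the inclusion $\Lambda \subset \overline{C}\cap\partial\Omega$ is clear since orbit points lie in $C$, and the reverse inclusion follows from cocompactness (every boundary accumulation point of $\overline{C}$ is an accumulation point of a $\G$-orbit).

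The heart of the argument is to match this topological boundary map with the \emph{projective} boundary map $\xi$ coming from the Anosov structure. The key point is that for a loxodromic $\g \in \G$ with attracting fixed point $\g_+ \in \partial_\infty\G$, the attracting fixed point of $\g$ acting on $\overline\Omega \subset \proj(\R^n)$ is the eigenline $\xi(\g_+)$ for the top eigenvalue, by the last assertion of Theorem \ref{t:BoundaryMapsAnosov}. On the other hand, this same attracting fixed point is, by strict convexity of $\Omega$, the image of $\g_+$ under the topological boundary map into $\partial\Omega$. Since loxodromic fixed points are dense in $\Lambda$ and both $\xi$ and the topological boundary map are continuous and $\G$-equivariant, they agree everywhere. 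This gives the first bullet point: $\xi$ is a homeomorphism onto $\overline{C}\cap\partial\Omega$.

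For the second bullet point, I would use the defining incidence relation $\xi(\eta) \subset \xi^*(\eta') \Leftrightarrow \eta = \eta'$ from Theorem \ref{t:BoundaryMapsAnosov}, together with the fact that $\Omega$ has $\mathcal{C}^1$ boundary (a consequence of Gromov hyperbolicity of the Hilbert geometry). For each $x \in \partial_\infty\Omega$, the point $\xi(x) \in \partial\Omega$ admits a unique supporting hyperplane $T_{\xi(x)}\partial\Omega$ by $\mathcal{C}^1$-regularity. I would show $\xi^*(x)$ equals this tangent hyperplane by again passing through loxodromic elements: for loxodromic $\g$, the hyperplane $\xi^*(\g_+)$ is the $\g$-invariant complementary hyperplane (spanned by the remaining eigenspaces), and invariance of $\partial\Omega$ forces this hyperplane to support $\Omega$ at $\xi(\g_+)$. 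Density of loxodromic fixed points and continuity of both $\xi^*$ and $x \mapsto T_{\xi(x)}\partial\Omega$ then yield equality on all of $\partial_\infty\Omega$.

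The main obstacle I anticipate is verifying that the topological (Gromov-boundary) map genuinely coincides with the algebraically defined $\xi$, rather than merely being a homeomorphism onto the same target set. This requires carefully relating the \emph{dynamical} attracting/repelling structure of loxodromic elements in the Hilbert metric to the eigenvalue/eigenspace structure underlying the Anosov boundary maps, and then invoking density of loxodromic fixed points in $\Lambda$ (which holds because $\G$ is a nonelementary hyperbolic group acting on its own boundary). The regularity input — that Gromov hyperbolicity of $(\Omega, d_H)$ forces strict convexity and $\mathcal{C}^1$ boundary, as recorded after Definition in the Hilbert-geometry subsection — is what makes the supporting hyperplane unique and hence makes the identification of $\xi^*$ with the tangent hyperplane well-posed.
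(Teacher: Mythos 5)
The paper itself gives no proof of this statement: it is imported verbatim from Danciger--Gu\'eritaud--Kassel \cite{DGK}, stated right after the theorem that strongly projectively convex-cocompact subgroups are projective Anosov. So your proposal can only be judged against the standard argument (the one in \cite{DGK}, building on Benoist's work on Gromov hyperbolic convex domains), and in outline you do follow it: identify $\partial_\infty \G$ with $\overline{C}\cap\partial\Omega$ via the orbit map of the cocompact action on $(C,d_H)$, pin down both boundary maps on the dense set of attracting fixed points of infinite-order elements using the last assertion of Theorem \ref{t:BoundaryMapsAnosov}, and conclude by continuity and equivariance. The regularity input (Gromov hyperbolicity forces strict convexity and $\mathcal{C}^1$ boundary, so that the Gromov boundary of $(C,d_H)$ sits inside $\partial\Omega$ and supporting hyperplanes are unique) is invoked at the right places.

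There is, however, a genuine error in your treatment of $\xi^*$. You assert that for loxodromic $\g$ the hyperplane $\xi^*(\g_+)$ is ``the $\g$-invariant complementary hyperplane (spanned by the remaining eigenspaces)''. Read in the natural way --- the invariant hyperplane complementary to the attracting line, spanned by the generalized eigenspaces other than the top one --- this is the repelling hyperplane $H^-(\g)$, which is $\xi^*(\g_-)$, not $\xi^*(\g_+)$. It does not contain $\xi(\g_+)$, so it cannot support $\Omega$ \emph{at} $\xi(\g_+)$; indeed the incidence relation of Theorem \ref{t:BoundaryMapsAnosov}, which you yourself quote, forces $\xi(\g_+)\subset\xi^*(\g_+)$, so your two sentences contradict each other. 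The correct identification is that $\xi^*(\g_+)$ is the attracting fixed point of $\g$ acting on $\proj({\R^n}^*)$, i.e.\ the span of all generalized eigenspaces of $\g$ \emph{except the most contracted one}; this follows by applying the last assertion of Theorem \ref{t:BoundaryMapsAnosov} to $\g^{-1}$ and to the dual representation. With this corrected, the tangency argument is cleanest in the direction opposite to the one you sketch: by $\mathcal{C}^1$-regularity the supporting hyperplane $T_{\xi(\g_+)}\partial\Omega$ is unique, hence $\g$-invariant (as $\g$ fixes $\xi(\g_+)$ and preserves $\partial\Omega$); by strict convexity it meets $\overline{\Omega}$ only at $\xi(\g_+)$, so it avoids $\xi(\g_-)$; and a $\g$-fixed point of the dual action lying in the attracting basin (the set of hyperplanes avoiding $\xi(\g_-)$) must equal the attracting dual fixed point, which is $\xi^*(\g_+)$. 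Density of the points $\g_+$ and continuity of $\xi^*$ and of $x\mapsto T_{\xi(x)}\partial\Omega$ then finish the proof as you say.
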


Conversely, Danciger--Gu\'eritaud--Kassel prove that a projective Anosov subgroup of $\SL(n,\R)$ is strongly projectively convex-cocompact as soon as it preserves a proper convex domain. In particular, we have the following:

\begin{theorem} \cite{DGK} \label{th - projective anosov are convex cocompact}
Let $I: \SL(n,\R) \to \SL(\Sym^2({\R^n}^*))$ be the representation given by the action of $\SL(n,\R)$ on the space of quadratic forms on $\R^n$. If $\Gamma \subset \SL(n,\R)$ is projective Anosov, then $I(\Gamma) \subset \SL(\Sym^2({\R^n}^*))$ is strongly projectively convex cocompact.
\end{theorem}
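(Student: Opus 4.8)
The plan is to deduce the statement from the converse theorem of Danciger--Guéritaud--Kassel quoted just above \cite{DGK}: a projective Anosov subgroup of $\SL(n,\R)$ that preserves a proper convex domain of $\proj(\R^n)$ is automatically strongly projectively convex cocompact. Applied to $\iota(\Gamma) \subset \SL(\frac{n(n+1)}{2},\R)$, this reduces the proof to two verifications: first, that $\iota(\Gamma)$ is itself projective Anosov; second, that $\iota(\Gamma)$ preserves a proper convex domain in $\proj(\R^{n(n+1)/2})$. The Gromov hyperbolic convex domain required by the definition of strong convex cocompactness is then produced by the cited theorem, not by us; note in particular that the positive definite cone used below is \emph{not} Gromov hyperbolic for $n \geq 3$, so it cannot serve directly as that domain.

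For the Anosov property, I would compute the Cartan projection of $\iota(g)$ in terms of that of $g$. Writing a quadratic form as a symmetric matrix $Q$, the action reads $\iota(g) \cdot Q = (g^{-1})^T Q g^{-1}$. Using the polar decomposition $g = k \exp(\mu(g)) k'$ with $k,k'\in \SO(n)$ and $\exp(\mu(g)) = \diag(e^{\mu_1(g)},\ldots,e^{\mu_n(g)})$, the maps $\iota(k)$ and $\iota(k')$ act orthogonally on the space of symmetric matrices equipped with $\langle Q_1,Q_2\rangle = \Tr(Q_1 Q_2)$, so the singular values of $\iota(g)$ coincide with those of $\iota(\exp(\mu(g)))$. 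The latter is diagonal in the basis of elementary symmetric matrices, with eigenvalues $e^{-\mu_i(g)-\mu_j(g)}$ for $1\leq i\leq j\leq n$. Ordering these, the two largest are obtained from the two smallest sums $\mu_i(g)+\mu_j(g)$, namely $2\mu_n(g)$ and $\mu_{n-1}(g)+\mu_n(g)$, so that $\mu_1(\iota(g)) = -2\mu_n(g)$ and $\mu_2(\iota(g)) = -\mu_{n-1}(g)-\mu_n(g)$, whence
\[\mu_1(\iota(g)) - \mu_2(\iota(g)) = \mu_{n-1}(g) - \mu_n(g) = \alpha_{n-1,n}(\mu(g)).\]
Since $\Gamma$ is projective Anosov it is $\Theta^{sym}$-Anosov for $\Theta^{sym} = \{\alpha_{1,2},\alpha_{n-1,n}\}$, so $\alpha_{n-1,n}(\mu(\gamma)) \geq C|\gamma| - C'$ for all $\gamma \in \Gamma$. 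Therefore $\mu_1(\iota(\gamma)) - \mu_2(\iota(\gamma))$ grows linearly in $|\gamma|$, i.e. $\iota(\Gamma)$ is projective Anosov. Discreteness is preserved because $\iota$ has finite kernel: an element acting trivially on all quadratic forms must be a scalar in $\SL(n,\R)$, hence lies in $\{\pm \Id\}$.

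For the invariant convex domain, I would take $\Omega \subset \proj(\R^{n(n+1)/2})$ to be the projectivization of the open cone of positive definite quadratic forms, exactly as in Example \ref{ex:ConvexSymmetricSpace}. This is a proper convex open domain, and the transformation $Q \mapsto (g^{-1})^T Q g^{-1}$ preserves positive definiteness for every $g \in \SL(n,\R)$; hence $\iota(\SL(n,\R))$, and a fortiori $\iota(\Gamma)$, preserves $\Omega$. Combining this with the projective Anosov property established in the previous paragraph and invoking the Danciger--Guéritaud--Kassel converse \cite{DGK} yields that $\iota(\Gamma)$ is strongly projectively convex cocompact.

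The argument is thus essentially an assembly of cited results; the only genuine computation is the identification of the relevant simple root. The main subtlety to watch is the bookkeeping of which root the action on quadratic forms realizes: it is $\alpha_{n-1,n}$ rather than $\alpha_{1,2}$, because dualizing swaps the highest and lowest singular values, and one must then appeal to the opposition-involution symmetry of the Anosov condition to conclude. The conceptual weight of the theorem lies entirely in the DGK converse, which manufactures the Gromov hyperbolic invariant convex set out of the mere preservation of a proper convex domain.
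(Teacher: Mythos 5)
Your proof is correct and takes essentially the same route the paper intends: the theorem is stated there as an immediate consequence (``In particular\ldots'') of the Danciger--Gu\'eritaud--Kassel converse quoted just before it, the invariant proper convex domain being precisely the projectivized cone of positive definite forms of Example \ref{ex:ConvexSymmetricSpace}. Your additional computation showing $\mu_1(\iota(g))-\mu_2(\iota(g))=\alpha_{n-1,n}(\mu(g))$, together with the opposition-involution symmetry giving the linear growth of $\alpha_{n-1,n}(\mu(\gamma))$, correctly fills in the verification that $\iota(\Gamma)$ is projective Anosov, which the paper leaves implicit.
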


\subsubsection{Symmetric boundary of a divisible convex set}

A particular case of strongly projectively convex-cocompact group in $\SL(n,\R)$ is a group acting properly discontinuously and cocompactly on a Gromov-hyperbolic convex set. Crampon proved that the Hilbert critical exponent of such a group is at most $n-2$. we prove here that the Hausdorff dimension of $\Lambda_\Gamma^{sym}$ equals $n-2$, so that our main theorem does recover Crampon's result.

If $\Gamma$ acts properly discontinuously and cocompactly on a Gromov hyperbolic convex domain $\Omega$, then we have $\Lambda_\Gamma = \partial \Omega$, $\Lambda_\Gamma^* = \partial \Omega^*$ (the set of hyperplanes tangent to $\partial\Omega$) and $\Lambda^{sym}_\Gamma \subset \Lambda_\Gamma \times \Lambda_\Gamma^*$ is the graph of the homeomorphism mapping $x\in \partial\Omega$ to $x^* \overset{\textrm{def}}{=} T_x \partial \Omega$.

More generally, we prove the following:
\begin{theorem} \label{theo - HD Symmetric boundary convex}
Let $\Omega$ be a strictly convex domain of $\proj(\R^n)$ with $\mathcal C^1$ boundary. Then
\[\partial \Omega^{sym} \overset{\mathrm{def}}{=} \{(x,x^*), x\in \partial \Omega\} \subset \partial \Omega \times \partial \Omega^*\]
has Hausdorff dimension $n-2$.
\end{theorem}

Note that, while $\partial \Omega$ and $\partial\Omega^*$ are $\mathcal C^1$ spheres of dimension $n-2$, the homeomorphism $x\mapsto x^*$ is not $\mathcal C^1$ unless $\partial \Omega$ is $\mathcal C^2$ (which is very constraining for divisble convex sets). It is thus not immediate that its graph has Hausdorff dimension $n-2$ in general.

\begin{proof}[Proof of Theorem \ref{theo - HD Symmetric boundary convex}]
Fix two distinct points $x_0,x_\infty\in \partial \Omega$, and consider vectors $v_0\in x_0$, $v_\infty\in x_\infty$ and linear forms $\alpha_0\in x_0^*$, $\alpha_\infty\in x_\infty^*$ such that $\alpha_0(v_\infty)=\alpha_\infty(v_0)=1$. Let $V=x_0^*\cap x_\infty^*\subset \R^n$, so that  the affine patch $\proj(\R^n)\setminus x_\infty^*$ can be identified with $V\times \R$ through the map $(v,t)\mapsto [v+tv_\infty+v_0]$.

Since $\Omega$ is strictly convex and has $\mathcal C^1$ boundary, under this identification $\partial\Omega\setminus \{x_\infty\}$ is the graph of a $\mathcal C^1$ strictly convex function $f:V\to \R$.

Let $p_{0,\infty}:\R^n\to V$ be the projection with kernel $x_0\oplus x_\infty$, so that we also get an identification of the affine patch $\proj\left((\R^n)^*\right)\setminus x_0^\perp$ with $V^*\times \R$ through the map $(\alpha,t)\mapsto [\alpha\circ p_{0,\infty} + t \alpha_\infty + \alpha_0]$.

Under this identification, the dual point of $(u,f(u))\in \partial \Omega\setminus \{x_\infty\}$ is $(u,f(u))^*=(-d_uf,d_uf(u)-f(u))\in \partial\Omega^*\setminus \{x_\infty^*\}$. It follows that the map
\[\begin{matrix} \partial\Omega^*\setminus \{x_\infty^*\} & \to & V^* \\ (u,f(u))^* & \mapsto & -d_uf \end{matrix}\]
 is just  the projection on the first coordinate of a $\mathcal C^1$ submanifold of $V^*\times \R$, so it is $\mathcal C^1$ (even though it is the composition of the non $\mathcal C^1$ maps $x^*\mapsto x$ and $v\mapsto d_vf$).

%

The projection
\[
\begin{array}{rccc}
\pi: & \left(\partial \Omega \setminus \{x_\infty\}\right)\times \left(\partial \Omega^*\setminus \{x_\infty^*\}\right) & \to & V\times V^*\\
& \left((u, f(u)), (v,f(v))^*\right) & \mapsto & (u, \mathrm d_v f)
\end{array}
\]
is a $\mathcal C^1$ isomorphism. It is thus enough to prove that
\[\pi(\partial \Omega^{sym} \setminus \{(y,y^*)\}) = \{(u, \mathrm d_u f), u\in V\}\]
has Hausdorff dimension $d-2$.

Now let us equip $V \times V^*$ with symmetric bilinear form
\[\langle (u,\alpha), (v,\beta)\rangle = \alpha(v) + \beta(u)~.\]
This form is non-degenerate of signature $(n-2,n-2)$, and we claim that $\pi(\partial \Omega^{sym} \setminus \{(y,y^*)\})$ is a spacelike topological submanifold of dimension $n-2$, hence a Lipschitz manifold. 

Indeed, for every $u\neq v \in V$, we have
\begin{eqnarray*}
\langle (v-u, \mathrm d_v f - \mathrm d_u f), (v-u, \mathrm d_v f - \mathrm d_u f)\rangle &=& -2(\mathrm d_u f(v-u) + \mathrm d_v f(u-v))~.
\end{eqnarray*}
By strict convexity of $f$, we have
\[\mathrm d_u f(v-u) < f(v) - f(u)\]
and \[\mathrm d_v f(u-v) < f(u) - f(v)~,\]
from which we get
\[\langle (v-u, \mathrm d_v f - \mathrm d_u f), (v-u, \mathrm d_v f - \mathrm d_u f)\rangle > 0~.\]

Now, let $W$ be a spacelike subspace of $V\times V^*$ of dimension $n-2$. The previous computation implies that the orthogonal projection of $\pi(\partial \Omega^{sym} \setminus \{(x_\infty, x_\infty^*)\})$ to $W$ is injective. Since $\pi(\partial \Omega^{sym} \setminus \{(x_\infty, x_\infty^*)\})$ is a $\mathcal C^1$ manifold of dimension $n-2$, this projection is open (by Brouwer's invariance of domain), and $\pi(\partial \Omega^{sym} \setminus \{(x_\infty, x_\infty^*)\})$ is the graph of a function from an open domain of $W$ to $W^\perp$. Finally, since $\pi(\partial \Omega^{sym} \setminus \{(x_\infty, x_\infty^*)\})$ is spacelike, this graph is $1$-Lipschitz (when $W$ is endowed with $\langle \cdot , \cdot \rangle$ and $W^\perp$ with $-\langle \cdot , \cdot \rangle$). We thus conclude that $\pi(\partial \Omega^{sym} \setminus \{(x_\infty, x_\infty^*)\})$ is a Lipschitz manifold of dimension $n-2$, hence $\partial \Omega^{sym}$ has Hausdorff dimension $n-2$.
\end{proof}

\subsubsection{Hilbert entropy and critical exponent}

Let $\G$ be a discrete subgroup of $\SL(n,\R)$ preserving a proper convex subset $\Omega\subset \proj(\R^n)$. Then $\G$ is a subgroup of isometries of $(\Omega,d_\Omega)$ where $d_\Omega$ is the Hilbert distance on $\Omega$. We denote the critical exponent associated to this metric by $\delta_\Omega(\G)$:
$$\delta_\Omega(\G)\overset{\mathrm{def}}{=} \limsup_{R\tv \infty} \frac{1}{R} \log \Card \{ \g \in \G \, |\, d_\Omega(x,\g x) \leq R\}.$$




Furthermore, if $\Omega$ is strictly convex then there is a one-to-one correspondence between the set of  conjugacy classes $ [\G]$ and the closed geodesics of $\Omega/\G$. For a conjugacy class $[\g]\in[\G] $ we denote by $\ell_\Omega(\g)$ the length of the corresponding closed geodesic for the Hilbert metric on $\Omega/\G$.  The exponential growth of the number of closed geodesics is denoted by $h_\Omega(\G)$:
\[h_\Omega(\G) \overset{\mathrm{def}}{=} \limsup_{R\tv \infty} \frac{1}{R} \log \Card \{ [\g] \in [\G] \, |\, \ell_\Omega(\g)\leq R\}~.\]

The work of Coornaert--Knieper on growth rate of conjugacy classes in Gromov hyperbolic groups has the following consequence: 
\begin{theorem}[Coornaert -- Knieper, \cite{CK}]\label{th - delta = h for convex cocompact gromov hyperbolic}
Let $\G$ be acting convex-cocompactly on a Gromov hyperbolic   convex domain $\Omega\subset \proj(\R^n)$. Then:
\[\delta_\Omega(\G)=h_\Omega(\G)~.\]
\end{theorem}

For any element $\g\in \G$ we can compute the length of the closed geodesic corresponding to $[\g]$ in the quotient manifold. A direct computation shows that $\g$ acts as a translation on the geodesic joining $\g^-$ to $\g^+$ with translation distance given by $\ell_{\Omega}(\g) := \frac{1}{2}(\lambda_1(\g)-\lambda_n(\g))$, hence $h_\Omega(\G)=2h_{1,n}(\G)$.  Combining this with Theorem \ref{th - delta = h for convex cocompact gromov hyperbolic} and Theorem \ref{theo-specialcaseSL(n,R)}, we obtain
\begin{coro} \label{cor:deltaH=2delta1n}
Let $\G \subset \SL(n,\R)$ be acting convex-cocompactly on a Gromov-hyperbolic convex domain $\Omega\subset \proj(\R^n)$. Then
$$\delta_\Omega(\G)= h_\Omega(\G) = 2 h_{1,n}(\G) = 2 \delta_{1,n}(\G)~.$$ 
\end{coro}

\begin{rmk}
If $\Gamma$ is a projective Anosov subgroup of $\SL(n, \R)$, then $I(\Gamma)$ is a strongly projectively convex cocompact  subgroup of $\SL(\Sym^2({\R^n}^*)$ by Theorem \ref{th - projective anosov are convex cocompact}. One easily verifies that
\[\alpha_{1, \frac{n(n+1)}{2}}(\mu(I(g))) = 2 \alpha_{1,n}(\mu(g))\]
for all $g\in \SL(n,\R)$, hence
\[\delta_{1, \frac{n(n+1)}{2}}(I(\Gamma)) = \frac{1}{2} \delta_{1,n}(\Gamma)~,\]
as well as 
\[h_{1, \frac{n(n+1)}{2}}(I(\Gamma)) = \frac{1}{2} h_{1,n}(\Gamma)~.\]
 In this case, the Hilbert length of the closed geodesic corresponding to $I(\g)$ is given by $(\lambda_1-\lambda_n)(\g)$, and therefore $h_\Omega(I(\G)) = h_{1,n}(\G)$.
\end{rmk}



\section{Lower bound} \label{sec - lower bound}
This section is devoted to the proof of the following lower bound on the Hausdorff dimension. For a projective Anosov subgroup $\G \subset \SL(n,\R)$, we write
\[\Lambda_\G = \xi(\partial_\infty(\G)) \subset \proj(\R^n)~,\]
\[\Lambda^*_\G = \xi^*(\partial_\infty(\G)) \subset \proj({\R^n}^*)\]
and
\[\Lambda^{\mathrm{sym}}(\G) = (\xi,\xi^*)(\partial_\infty \G) \subset \proj(\R^n)\times \proj({\R^n}^*)~.\]

\begin{theorem} \label{th - lower bound}
Let $\G$ be a strongly projectively convex-cocompact subgroup of $\SL(n,\R)$. Then we have 
\[ 2\delta_{{1,n}}(\G) \leq \DimH(\Lambda^{\mathrm{sym}}_\G) \]
\end{theorem}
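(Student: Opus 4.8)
The plan is to combine Coornaert's version of Sullivan's theorem with a comparison between the Gromov metric of the Hilbert geometry and the symmetric projective metric. Since $\G$ is strongly projectively convex-cocompact, it preserves a strictly convex, Gromov hyperbolic domain $\Omega\subset\proj(\R^n)$ with $\mathcal C^1$ boundary and acts cocompactly on a convex core $C\subset\Omega$; by Theorem \ref{theo - BoundaryMapsConvexCocompact} the boundary map $\xi$ identifies $\partial_\infty\G$ with $\Lambda_\G=\overline C\cap\partial\Omega$, while $\xi^*(x)$ is the tangent hyperplane to $\partial\Omega$ at $\xi(x)$. Applying Coornaert's theorem \cite{Coornaert} to the convex-cocompact action on $(\Omega,d_H)$ gives $\DimH(\partial_\infty\G,d_G)=\delta_H(\G)$, where $d_G$ is a Gromov (visual) metric on $\partial_\infty\G$ associated with the Hilbert distance. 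By Corollary \ref{cor:deltaH=2delta1n} this critical exponent equals $2\delta_{1,n}(\G)=2\delta_{\alpha_{1,n}}(\G)$, so it remains to compare the abstract metric space $(\partial_\infty\G,d_G)$ with the concrete limit set $\Lambda^{\mathrm{sym}}_\G$.

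Next I transport everything onto the fixed set $\partial_\infty\G$. Write $d_P$ for the pullback under $\xi^{\mathrm{sym}}=(\xi,\xi^*)$ of the product Finsler metric on $\proj(\R^n)\times\proj({\R^n}^*)$; since $\xi^{\mathrm{sym}}$ is a homeomorphism onto $\Lambda^{\mathrm{sym}}_\G$ we have $\DimH(\Lambda^{\mathrm{sym}}_\G)=\DimH(\partial_\infty\G,d_P)$. As Lipschitz maps do not increase Hausdorff dimension (Proposition \ref{p:GeneralComparisonHDim}), the theorem follows once I show that the identity map $(\partial_\infty\G,d_P)\to(\partial_\infty\G,d_G)$ is Lipschitz, i.e. that there is a constant $C$ with
\[ d_G(p,q)\le C\,d_P(p,q)\qquad\text{for all }p,q\in\partial_\infty\G. \]

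The geometric heart of the argument is this comparison. Fix an affine chart in which $\Omega$ is bounded and a basepoint $o\in C$. For $p,q\in\partial_\infty\G$ the Hilbert geodesic joining them is the open chord $(\xi(p),\xi(q))$, and Gromov hyperbolicity of $(\Omega,d_H)$ gives $d_G(p,q)\asymp e^{-(p\mid q)_o}\asymp e^{-d_H(o,(\xi(p),\xi(q)))}$ up to a basepoint-independent multiplicative error (moving $o$ by a bounded Hilbert distance costs a uniform factor, in the spirit of Proposition \ref{prop - Benoist cartan projection continuous}). Restricting to the $2$-plane $\Pi$ through $\xi(p),\xi(q),o$ and computing $d_H(o,(\xi(p),\xi(q)))$ by a cross-ratio yields
\[ e^{-2d_H(o,(\xi(p),\xi(q)))}\ \asymp\ h(p,q), \]
where $h(p,q)$ is the Euclidean depth of the cap of $\Omega\cap\Pi$ cut off by the chord. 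The decisive convex-geometric estimate is that this cap is contained in the triangle bounded by the chord and the tangent lines $\xi^*(p)\cap\Pi$, $\xi^*(q)\cap\Pi$, whence $h(p,q)\le C\,|\xi(p)-\xi(q)|\cdot\angle(\xi^*(p),\xi^*(q))$: the chord has length $\asymp|\xi(p)-\xi(q)|$ and the two tangents close up at rate $\asymp\angle(\xi^*(p),\xi^*(q))$. On the compact limit set, in the fixed chart, $|\xi(p)-\xi(q)|$ and $\angle(\xi^*(p),\xi^*(q))$ are comparable to the two factor distances, so the right-hand side is $\le C\,d_P(p,q)^2$. Chaining the displays gives $d_G(p,q)^2\le C\,d_P(p,q)^2$, hence $d_G\le C'\,d_P$ as required. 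The geometric-mean shape of the estimate, controlling $d_G$ by the square root of the product of the two factor distances, is exactly the mechanism forcing the \emph{symmetric} limit set (rather than $\Lambda_\G$ alone) to govern the bound, and it matches the factor $2$ already present in $\delta_H=2\delta_{1,n}$.

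The main obstacle is making Step three rigorous and uniform. One must control the Hilbert distance to a degenerating near-boundary chord by a cross-ratio with errors bounded independently of the chord, use strict convexity and the $\mathcal C^1$ property to pass between the analytic angle of the tangent hyperplanes and their metric distance in $\proj({\R^n}^*)$, and keep all comparison constants uniform over the compact limit set and independent of the basepoint. The containment of the cap in the tangent triangle, which supplies the crucial upper bound on the sagitta, is precisely where convexity enters; note that only this inequality (not the reverse comparison giving $\asymp$) is needed for the theorem, so no refined Benoist-type regularity of $\partial\Omega$ is required.
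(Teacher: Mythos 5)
Your proposal is correct and follows essentially the same route as the paper: Coornaert's theorem together with Corollary \ref{cor:deltaH=2delta1n} identifies $2\delta_{1,n}(\G)$ with the Hausdorff dimension of $\Lambda_\G$ for the Gromov quasi-metric, and the theorem is then reduced to the comparison $d_x(p,q)\le C\sqrt{d(p,q)\,d^*(p^*,q^*)}$, which is exactly the paper's Lemma \ref{lemma gromov smaller euclid}, followed by the Lipschitz monotonicity of Hausdorff dimension. The only divergence is internal to the proof of that lemma --- where the paper expands the Gromov product directly via the Hilbert cross-ratio and the law of sines, you invoke the standard comparison of the Gromov product with the Hilbert distance to the chord and then bound the resulting boundary cap by the triangle cut out by the two tangent lines --- but this rests on the same convexity mechanism (chord length times tangent angle, with the same uniformity checks via compactness and the $\mathcal C^1$ boundary), so it is a reorganization rather than a different argument.
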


The proof is divided into two parts. First, we use  the Hilbert distance on a $\G$-invariant proper convex domain $\Omega\subset\proj(\R^n)$ to establish the equality between $2\delta_{1,n}(\G)$ and the  Hausdorff dimension of $\Lambda_\G\subset \partial\Omega$ for Gromov's ``quasi-distance'' on $\partial \Omega$. Then, we compare this quasi-distance with a Riemannian distance on $\proj(\R^n)\times\proj({\R^n}^*)$.

\subsection{Gromov metric on the boundary}\label{subsec-gromov metric on the boundary}
Let $\G$ be a strongly projectively convex-cocompact subgroup of $\SL(n,\R)$. Let $\Omega$ be a $\G$-invariant Gromov hyperbolic convex domain $\Omega\subset\proj(\R^n)$ and $\cC\subset \Omega$ a closed $\G$-invariant subset of $\Omega$ on which $\G$ acts cocompactly. Let $d_\Omega$ denote the Hilbert distance on $\Omega$.

Recall that Theorem \ref{theo - BoundaryMapsConvexCocompact} states that $\Lambda_\Gamma$ is the intersection of the closure of $\cC$ in $\proj(\R^n)$ with $\partial \Omega$ and that $\Lambda_\G^*$ is the set of hyperplanes tangent to $\Omega$ at a point in $\Lambda_\G$.

Given $x\in \Omega$ and $\xi,\eta\in \partial\Omega$, we define  the Gromov product $(\xi\vert\eta)_x$ by
\[( \xi \vert \eta)_x  =  \lim_{k\tv +\infty} \frac{1}{2}[ d_\Omega(x_k, x) +d_\Omega(y_k, x) -d_\Omega(x_k, y_k)],\]
where $(x_k)$ and $(y_k)$ are sequences in $\Omega$ such that $x_k \tv \xi $ and $y_k\tv \eta$. 

\begin{defi} \label{def - gromov distance}
The \emph{Gromov quasi-distance} between $\xi$ and $\eta$ is the quantity 
\[d_x(\xi, \eta)= e^{-( \xi\vert \eta)_x }.\]
\end{defi}

In general, $d_x$ is not quite a distance. However, Gromov proves that $d_x$ behaves coarsely like some power of a distance. More precisely:
\begin{prop}[Gromov] Let $(X,d)$ be a complete Gromov hyperbolic space, and let $x\in X$. For some sufficiently small $\epsilon$, there exists a constant $C>1$ and a distance $\bar d_{x,\epsilon}$ on $\partial_\infty X$ such that
\[\frac{1}{C} \bar d_{x,\epsilon} \leq d_x^\epsilon \leq C \bar d_{x,\epsilon}~.\]
\end{prop}
We can then define the Hausdorff dimension of $(\partial_\infty X, d_x)$ by\footnote{Alternatively, one could copy the definition of Hausdorff dimension using covering by ``quasi-balls'' for the quasi-distance $d_x$.}
\[\DimH(\partial_\infty X, d_x) = \frac{1}{\epsilon} \DimH(\partial_\infty X, \bar d_{x,\epsilon})~.\]
Proposition \ref{p:GeneralComparisonHDim} implies that this definition is independent of the choices of $\epsilon$ and $d_{x,\epsilon}$.

We can apply here the main result of \cite{Coornaert}:

\begin{theorem}[Coornaert, \cite{Coornaert}] \label{theo - coornaert}
Let $(X,d)$ be a complete Gromov hyperbolic space, and let $\G$ be a discrete group of isometries that acts cocompactly on $X$. Fix $x\in X$, and consider the Gromov quasi-distance $d_x$ on the visual boundary $\partial_\infty X$. Then \[\DimH(\partial_\infty X,d_x)= \delta(\G)~,\] where $\delta(\G)$ is the critical exponent of the action of $\G$ on $(X,d)$.
\end{theorem}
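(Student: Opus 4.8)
The plan is to reproduce, in the Gromov--hyperbolic setting, Sullivan's original argument: construct a \emph{Patterson--Sullivan measure} on $\partial_\infty X$, prove that it is Ahlfors regular for the genuine distance $d_x^\epsilon$, and then invoke the elementary fact that a metric space carrying an Ahlfors $D$--regular measure has Hausdorff dimension exactly $D$. Throughout, cocompactness of the $\G$--action is what upgrades the ``large scale'' estimates to \emph{uniform} ones and forces the support of the measure to be the whole boundary.

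First I would build a conformal density. Writing $\delta = \delta(\G)$ for the critical exponent, the Poincaré series $\mathcal P(s) = \sum_{g\in\G} e^{-s\, d(x,g\cdot x)}$ converges for $s>\delta$ and diverges for $s<\delta$. Normalizing the atomic measures $\mathcal P(s)^{-1}\sum_{g\in\G} e^{-s\, d(x,g\cdot x)}\,\delta_{g\cdot x}$ on the compactification $X\cup\partial_\infty X$ and letting $s\tv\delta^+$, I extract a weak-$*$ limit $\nu_x$; if $\mathcal P$ happens to converge at $s=\delta$, one first applies Patterson's trick, multiplying the weights by a slowly varying function to restore divergence. Carrying this out from every basepoint $y$ yields a $\G$--equivariant family $\{\nu_y\}$ which is $\delta$--conformal, in the sense that $\frac{d\nu_y}{d\nu_x}(\xi)\asymp e^{-\delta\,\beta_\xi(y,x)}$ for the Gromov (Busemann) cocycle $\beta$, and whose support is the limit set of $\G$; since $\G$ acts cocompactly, this limit set is all of $\partial_\infty X$.

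The heart of the matter is then Sullivan's shadow lemma: there are constants so that, for every $g\in\G$, the $\nu_x$--measure of the shadow seen from $x$ of a fixed-radius ball $B(g\cdot x,R)$ is comparable to $e^{-\delta\, d(x,g\cdot x)}$, \emph{uniformly in $g$}. Unwinding the definitions, a $d_x^\epsilon$--ball of radius $r$ about $\xi$ is the set $\{\eta : (\xi\vert\eta)_x \ge \tfrac1\epsilon\log(1/r)\}$, which is comparable to the shadow of a ball centred at distance $\tfrac1\epsilon\log(1/r)$ from $x$ along the ray towards $\xi$; the shadow lemma then gives $\nu_x\big(B_{d_x^\epsilon}(\xi,r)\big)\asymp r^{\delta/\epsilon}$. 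Thus $\nu_x$ is Ahlfors $(\delta/\epsilon)$--regular for $d_x^\epsilon$, and a mass-distribution argument on one side together with an efficient covering by shadows on the other give $\DimH\big(\partial_\infty X, d_x^\epsilon\big) = \delta(\G)/\epsilon$. I expect the shadow lemma and its uniformity to be the main obstacle: proving that shadows of balls behave like metric balls for $d_x$, and controlling the distortion uniformly over $g$, is precisely where Gromov hyperbolicity (thinness of triangles and the comparison of Gromov products along geodesics) and the cocompactness hypothesis are genuinely used.
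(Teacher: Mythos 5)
The paper does not prove this statement at all---it is quoted as an external result of Coornaert \cite{Coornaert} and used as a black box. Your proposal is correct and reconstructs essentially the argument of that cited source: the Patterson--Sullivan construction on the Gromov boundary (with $\asymp$ in place of equality for the quasi-conformal density, as is forced in the coarse setting), the uniform shadow lemma, the identification of $d_x^\epsilon$-balls with shadows via the Gromov product, and the mass-distribution conclusion from Ahlfors $(\delta/\epsilon)$-regularity.
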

Applying this result to $(X,d)=(\cC,d_\Omega)$, we get that \[\DimH(\Lambda_\G,d_x)=\delta_H(\G)~.\] By Corollary \ref{cor:deltaH=2delta1n}, we conclude:

\begin{prop} \label{prop - dimension Gromov = exposant critique} Let $\G\subset \SL(n,\R)$ be acting convex-cocompactly on a Gromov-hyperbolic convex domain $\Omega\subset\proj(\R^n)$. For any $x\in \Omega$, we have:
\[ \DimH(\Lambda_\G,d_x)=2\delta_{1,n}(\G).\]
\end{prop}


\subsection{Gromov distance VS Euclidean distance}

We keep the same notations as in the previous subsection. We now wish to show that $\DimH(\Lambda_\G,d_x)\leq \DimH(\Lambda_\G^{\mathrm{sym}})$. For $p\in \partial \Omega$, let $p^*=T_p\partial\Omega \in \proj({\R^n}^*)$. The required inequality will easily follow from the following comparison lemma:

\begin{lemma} \label{lemma gromov smaller euclid}
Given $d_\proj$ and $d_\proj^*$ Riemannian distances on $\proj(\R^n)$ and $\proj({\R^n}^*)$, there is a constant $C>0$ such that:
\[ \forall p,q\in\partial \Omega~ d_x(p,q) \leq C \sqrt{d_\proj(p,q) d_\proj^*(p^*,q^*)}\]
\end{lemma}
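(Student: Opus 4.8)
The plan is to estimate $d_x(p,q)=e^{-(p\vert q)_x}$ by bounding the Gromov product $(p\vert q)_x$ from below. Since $(\Omega,d_\Omega)$ is a proper geodesic Gromov hyperbolic space in which the projective segment $[p,q]$ is the geodesic joining $p$ to $q$, the standard comparison between Gromov products and distances to geodesics provides a constant $\delta$ with $(p\vert q)_x\geq d_\Omega(x,[p,q])-\delta$. It therefore suffices to prove that $x$ is Hilbert-far from the chord $[p,q]$, with a quantitative lower bound on $d_\Omega(x,[p,q])$ in terms of $d(p,q)$ and $d^*(p^*,q^*)$. Fixing an affine chart in which $\Omega$ is bounded (so that $d$, $d^*$ are bi-Lipschitz to Euclidean distances near the region of interest), I would introduce the maximal Euclidean depth $D=\max_{z\in[p,q]}\mathrm{dist}(z,\partial\Omega)$ of the chord and reduce the lemma to the two estimates $d_\Omega(x,[p,q])\geq\frac12\log(1/D)-C$ and $D\leq C'\,d(p,q)\,d^*(p^*,q^*)$; combining them yields $d_x(p,q)\leq e^{\delta+C}\sqrt D\leq C''\sqrt{d(p,q)\,d^*(p^*,q^*)}$.

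For the first estimate, I would expand the cross-ratio defining $d_\Omega(x,z)$ along the line through $x$ and $z$: if $b$ denotes the boundary point beyond $z$ on this line, then $d_\Omega(x,z)=\frac12\log(1/\overline{zb})+O(1)$, the remainder being bounded since $x$ is a fixed interior point and $z$ ranges near $\partial\Omega$. Because $x$ lies in the interior of $\Omega$, every line through $x$ crosses the $\mathcal C^1$ boundary transversally, and the crossing angle is bounded below uniformly over the compact set $\partial\Omega$; hence $\overline{zb}$ is comparable to $\mathrm{dist}(z,\partial\Omega)$. This gives $d_\Omega(x,z)\geq\frac12\log\bigl(1/\mathrm{dist}(z,\partial\Omega)\bigr)-C$, and minimizing over $z\in[p,q]$ yields $d_\Omega(x,[p,q])\geq\frac12\log(1/D)-C$.

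The heart of the argument is the depth estimate. Localizing near $p$, I would write $\partial\Omega$ as the graph $x_n=f(x')$ of a convex $\mathcal C^1$ function with $f(0)=0$ and $\nabla f(0)=0$, so that $p=0$, the supporting hyperplane $p^*$ is $\{x_n=0\}$, $q=(a,f(a))$, and $q^*$ has normal direction $(-\nabla f(a),1)$; thus $d(p,q)\asymp|a|$ and $d^*(p^*,q^*)\asymp|\nabla f(a)|$. For a point $z=(ta,tf(a))$ on the chord, convexity gives $f(ta)\leq t\,f(a)$, so its vertical distance to the boundary is $tf(a)-f(ta)\leq f(a)$, whence $\mathrm{dist}(z,\partial\Omega)\leq f(a)$ and $D\leq f(a)$. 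The convexity inequality $f(a)=f(a)-f(0)\leq\nabla f(a)\cdot a\leq|\nabla f(a)|\,|a|$ then gives $D\leq|\nabla f(a)|\,|a|\asymp d(p,q)\,d^*(p^*,q^*)$, as required.

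Two points need care. First, all constants must be uniform in $p$: I would cover the compact boundary $\partial\Omega$ by finitely many graph charts with a common $\mathcal C^1$ modulus, and use that the Gauss map $p\mapsto p^*$ is a homeomorphism (by strict convexity and $\mathcal C^1$-regularity) to obtain the uniform comparisons $d(p,q)\asymp|a|$ and $d^*(p^*,q^*)\asymp|\nabla f(a)|$ when $q$ is close to $p$. When $p$ and $q$ are not close, both $d(p,q)$ and $d^*(p^*,q^*)$ are bounded below while $d_x$ is bounded by the diameter, so the inequality holds trivially for $C$ large. The main obstacle is precisely the uniform depth estimate $D\lesssim d(p,q)\,d^*(p^*,q^*)$: the convexity inequality $f(a)\leq\nabla f(a)\cdot a$ is what multiplies the horizontal displacement $d(p,q)$ by the angular displacement $d^*(p^*,q^*)$, and ensuring that its constants are uniform over $\partial\Omega$ is the delicate step.
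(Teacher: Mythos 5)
Your proof is correct, but it takes a genuinely different route from the paper's. The paper computes the Gromov product directly: it expands $(p_n\vert q_n)_x$ as an explicit product of cross-ratios for the Hilbert distance, isolates a term converging to $-\Log\sqrt{d(p,q)}$, and converts the remaining ratios $\frac{pp_n}{a_np_n}$ and $\frac{qq_n}{b_nq_n}$ into angles via the law of sines in the plane $\mathcal{P}_{p,q}$; its key geometric input is the identity $\alpha(p,q)+\beta(p,q)=\gamma(p,q)$ relating these asymptotic angles to the exterior angle between the tangent lines at $p$ and $q$, which is then bounded by $C\,d^*(p^*,q^*)$. You instead invoke $\delta$-hyperbolicity of $(\Omega,d_\Omega)$ (legitimately available, since the standing hypothesis is that $\Omega$ is a Gromov hyperbolic convex domain) to replace the Gromov product by $d_\Omega(x,[p,q])$ up to an additive constant, and then prove two elementary estimates: the logarithmic lower bound $d_\Omega(x,z)\geq \tfrac12\log\bigl(1/\mathrm{dist}(z,\partial\Omega)\bigr)-C$, for which your transversality claim is correctly justified by the cone spanned by $b$ and an inscribed ball around $x$, and the depth bound $D\leq C'\,d(p,q)\,d^*(p^*,q^*)$ via the convexity inequality $f(a)\leq\nabla f(a)\cdot a$. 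Both arguments ultimately rest on the same phenomenon --- the angle between supporting hyperplanes controls how fast $\partial\Omega$ pulls away from the chord --- but the paper's computation is self-contained (it never uses thin triangles, only cross-ratios and the $\mathcal{C}^1$ regularity of $\partial\Omega$), whereas yours is more modular at the cost of a hyperbolicity constant entering $C$. One point to tighten: in a fixed finite atlas of graph charts the base point $p$ will not generally satisfy $\nabla f(p')=0$, so you should run the convexity estimate in its unnormalized form, namely that the gap between the chord and the graph is at most $\bigl(\nabla f(q')-\nabla f(p')\bigr)\cdot(q'-p')\leq \vert\nabla f(q')-\nabla f(p')\vert\,\vert q'-p'\vert$; this yields the uniform depth bound directly and removes the delicate step you flagged, since the comparison of $\vert\nabla f(q')-\nabla f(p')\vert$ with $d^*(p^*,q^*)$ is uniform on charts with bounded $\mathcal{C}^1$ norm.
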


\begin{proof}
First of all, since $\Omega$ and $\Omega^*$ are proper convex sets, we can assume without loss of generality that $d_\proj$ and $d_\proj^*$ are Euclidean distances in affine charts in which $\Omega$ and $\Omega^*$ are bounded.

Consider sequences $p_n\in[xp)$, $q_n\in[xq)$ that converge to $p$ and $q$ respectively. Consider $p^-$ (resp. $q^-$) the other intersection point between $\partial \Omega$ and the projective line $(xp)$ (resp. $(xq)$). Finally, consider $a_n,b_n\in \partial \Omega$ the endpoints of the geodesic joining $p_n$ and $q_n$ (see Figure \ref{figure gromov distance}).

\begin{figure}[h]
\begin{tikzpicture}[line cap=round,line join=round,>=triangle 45,x=1.0cm,y=1.0cm,scale=0.5]
\clip(-3.5,-8) rectangle (20,8.05699233262419);
\draw [rotate around={32.730897488532186:(7.71,-1.62)}] (7.71,-1.62) ellipse (8.500687972956655cm and 3.0025982104787134cm);
\draw (5.41442766453723,0.45771305753372005)-- (7.26,-2.54);
\draw (13.64,3.6)-- (7.26,-2.54);
\draw (1.8144447318306245,-2.704193078492863)-- (14.96254162624239,2.773947370585196);
\draw [shift={(10.493891698519631,3.969887782684295)}] plot[domain=-0.11703268366256303:0.6049618762811327,variable=\t]({1.*0.681832435596674*cos(\t r)+0.*0.681832435596674*sin(\t r)},{0.*0.681832435596674*cos(\t r)+1.*0.681832435596674*sin(\t r)});
\draw (7.751563986137476,-2.0468817028004826) node[anchor=north west] {$x$};
\draw [dash pattern=on 6pt off 6pt] (7.26,-2.54)-- (2.8001903823373118,-6.8320424847098575);
\draw [dash pattern=on 6pt off 6pt] (7.26,-2.54)-- (8.54690646511876,-4.630287245958173);
\draw (8.913509500211314,-4.572850211656651) node[anchor=north west] {$p^-$};
\draw (3.1037819298421243,-6.694663759095833) node[anchor=north west] {$q^-$};
\draw (4.871959886041442,1.312656413978221) node[anchor=north west] {$p$};
\draw (13.990706203012216,4.59641547549124) node[anchor=north west] {$q$};
\draw (15,3) node[anchor=north west] {$b_n$};
\draw (11.212140843270427,1.2621370438010977) node[anchor=south east] {$q_n$};
\draw (6.6148781571522,-0.5313005974867817) node[anchor=north west] {$p_n$};
\draw (1.8407976754140396,-2.4762963493060313) node[anchor=north west] {$a_n$};
\draw [dash pattern=on 6pt off 6pt] (14.96254162624239,2.773947370585196)-- (16.21776025931955,3.2969328658639094);
\draw [dash pattern=on 6pt off 6pt] (1.8144447318306245,-2.704193078492863)-- (-0.6335899569568746,-3.724164102215152);
\draw (-1.6197791817189124,-2.5520754045717164) node[anchor=north west] {$a'_n$};
\draw (16.466155341691262,4.1670008289856915) node[anchor=north west] {$b'_n$};
\draw (11,5) node[anchor=north west] {$\gamma(p,q)$};
\draw (4.676918376644277,4.653789021223915)-- (19.88073796019969,2.866276775873923);
\draw (12.172636678756389,5.1306491642228735)-- (-1.7450583457772426,-4.492684394297657);
\end{tikzpicture}
\caption{Computing the Gromov product}
\label{figure gromov distance}
\end{figure}
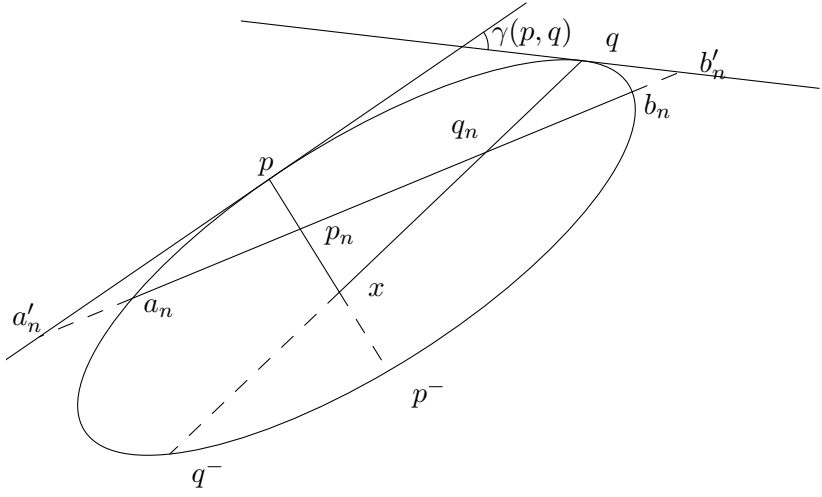

We have that:
\begin{align*}
(p_n\vert q_n)_x &= \frac{1}{2}\left( d_\Omega(x,p_n)+d_\Omega(x,q_n)-d_\Omega(p_n,q_n)\right)\\
&= \frac{1}{4} \Log\left( \frac{p^-p_n\cdot px}{p^-x\cdot pp_n} \cdot\frac{q^-q_n \cdot qx}{q^-x\cdot qq_n}\cdot \frac{a_np_n\cdot b_nq_n}{a_nq_n\cdot b_np_n} \right) \\
&= \frac{1}{4}\Log \left(\frac{a_np_n\cdot b_nq_n}{pp_n\cdot qq_n}  \right)+\underbrace{\frac{1}{4}\Log \frac{1}{a_nq_n\cdot b_np_n}}_{\to -\Log \sqrt{d(p,q)}} + \underbrace{\frac{1}{4}\Log \left(\frac{p^-p_n\cdot px\cdot q^-q_n\cdot qx}{p^-x\cdot q^-x}\right)}_\textrm{bounded}
\end{align*}

This gives us a constant $C_1>0$ such that:
\[ e^{-(p_n\vert q_n)_x} \leq C_1 \sqrt{pq} \left( \frac{pp_n}{a_np_n}\cdot \frac{qq_n}{b_nq_n}\right)^\frac{1}{4} \]

In order to deal with the terms $\frac{pp_n}{a_np_n}$ and $\frac{qq_n}{b_nq_n}$, we consider the affine plane  $\mathcal P_{p,q}$ containing $x,p,q$.  Note that it also contains all the points defined above.

Denote by $a'_n$ (resp. $b'_n$) the intersection of the line $(p_na_n)$ with the tangent space to $\partial \Omega$ at $p$. Note that $\frac{a_np_n}{a'_np_n}\to 1$ as $n\to +\infty$, so that we can work with $\frac{pp_n}{a'_np_n}$ instead of $\frac{pp_n}{a_np_n}$.

Now look at the triangle $a'_np_np$, denote by $\alpha_n$ the angle at $a'_n$, and $\theta(p)$ the angle at $p$. The latter does not depend on $n$ as it is the angle between the line $(xp)$ and the tangent line $T_p\partial\Omega \cap \mathcal P_{p,q}$ (see Figure \ref{figure angles gromov distance}). The law of sines gives us $\frac{pp_n}{a'_np_n}=\frac{\sin \alpha_n}{\sin \theta(p)}$.

We now consider the triangle $b'_nq_nq$, and denote by $\beta_n$ the angle at $b'_n$, and $\p(q)$ the angle at $q$. Just as in the previous case, we get $\frac{qq_n}{b'_nq_n}=\frac{\sin \beta_n}{\sin\p(q)}$.

We now find:
\[ e^{-(p_n\vert q_n)_x} \leq \frac{C_1}{(\sin \theta(p) \sin \p(q))^\frac{1}{4}} \sqrt{pq} \left( \sin \alpha_n \sin \beta_n \right)^\frac{1}{4} \]

Notice that the sequence $(\alpha_n)$ (resp. $(\beta_n)$) has a limit $\alpha(p,q)$ (resp. $\beta(p,q)$) which is the angle at $p$ (resp. at $q$) between the line $(pq)$ and $T_p\partial\Omega \cap \mathcal P_{p,q}$ (resp. $T_q\partial\Omega \cap \mathcal P_{p,q}$).

\begin{figure}[h]
\begin{tikzpicture}[line cap=round,line join=round,>=triangle 45,x=1.0cm,y=1.0cm,scale=0.45]
\clip(2,-10) rectangle (30,6);
\draw (2.722715048128429,-7.643843188948834)-- (15.392017102066404,4.832525331172827);
\draw (10.161374629712993,3.5034276537715505)-- (29.69053566378659,-0.33380596324181266);
\draw (8.49064911030651,-1.9637456860932616)-- (21.027452918105787,1.3683804379402549);
\draw (5.9630864754107265,-4.452817519442409)-- (28.18047073874233,-0.03709726776110722);
\draw (8.49064911030651,-1.9637456860932616)-- (16.163751237331663,-7.965399078642691);
\draw (21.027452918105787,1.3683804379402549)-- (19.722303083277016,-7.7939026041393005);
\draw [shift={(8.49064911030651,-1.9637456860932616)}] plot[domain=3.9193183332717356:5.619412631936143,variable=\t]({1.*0.4707072634791916*cos(\t r)+0.*0.4707072634791916*sin(\t r)},{0.*0.4707072634791916*cos(\t r)+1.*0.4707072634791916*sin(\t r)});
\draw [shift={(8.49064911030651,-1.9637456860932616)}] plot[domain=0.25978144193853264:0.7777256796819427,variable=\t]({1.*0.9765263309181195*cos(\t r)+0.*0.9765263309181195*sin(\t r)},{0.*0.9765263309181195*cos(\t r)+1.*0.9765263309181195*sin(\t r)});
\draw [shift={(13.396812837983548,2.8677048883704206)}] plot[domain=-0.1940157670578886:0.7777256796819425,variable=\t]({1.*0.6715659744278283*cos(\t r)+0.*0.6715659744278283*sin(\t r)},{0.*0.6715659744278283*cos(\t r)+1.*0.6715659744278283*sin(\t r)});
\draw [shift={(21.027452918105787,1.3683804379402549)}] plot[domain=2.9475768865319045:3.401374095528326,variable=\t]({1.*1.5430717203064408*cos(\t r)+0.*1.5430717203064408*sin(\t r)},{0.*1.5430717203064408*cos(\t r)+1.*1.5430717203064408*sin(\t r)});
\draw [shift={(21.027452918105787,1.3683804379402549)}] plot[domain=4.570892808167798:6.089169540121698,variable=\t]({1.*0.6145496508903783*cos(\t r)+0.*0.6145496508903783*sin(\t r)},{0.*0.6145496508903783*cos(\t r)+1.*0.6145496508903783*sin(\t r)});
\draw [shift={(28.18047073874233,-0.03709726776110722)}] plot[domain=2.9475768865319045:3.3377866570401675,variable=\t]({1.*1.753050834530312*cos(\t r)+0.*1.753050834530312*sin(\t r)},{0.*1.753050834530312*cos(\t r)+1.*1.753050834530312*sin(\t r)});
\draw [shift={(5.9630864754107265,-4.4528175194424096)}] plot[domain=0.19619400345037571:0.7777256796819437,variable=\t]({1.*0.7162176818659123*cos(\t r)+0.*0.7162176818659123*sin(\t r)},{0.*0.7162176818659123*cos(\t r)+1.*0.7162176818659123*sin(\t r)});
\draw (14.255852958481443,3.8035464841524838) node[anchor=north west] {$\gamma(p,q)$};
\draw (19.5,1.8742111459893405) node[anchor=north east] {$\beta(p,q)$};
\draw (9.7,0) node[anchor=north west] {$\alpha(p,q)$};
\draw (6.902941614148573,-3.2) node[anchor=north west] {$\alpha_n$};
\draw (8.23203929154985,-2.28457836071788) node[anchor=north west] {$\theta(p)$};
\draw (21.523016065562615,1.102477010724083) node[anchor=north west] {$\varphi(q)$};
\draw (25,0.5022393499622162) node[anchor=north west] {$\beta_n$};
\draw (7.8,-1.2) node[anchor=north west] {$p$};
\draw (21.008526642052445,2.131455857744426) node[anchor=north west] {$q$};
\draw (11.533346425740117,-3.4207425043028423) node[anchor=north west] {$p_n$};
\draw (20.901341345487825,-1.3842218695750796) node[anchor=north west] {$q_n$};
\draw (4.4,-3.677987216057928) node[anchor=north west] {$a'_n$};
\draw (28.361437986385315,0.9524175955336163) node[anchor=north west] {$b'_n$};
\end{tikzpicture}
\caption{}
\label{figure angles gromov distance}
\end{figure}

We thus obtain:

\[ d_x(p,q) \leq \frac{C_1}{(\sin \theta(p) \sin \p(q))^\frac{1}{4}} \sqrt{pq} \left( \alpha(p,q)\beta(p,q) \right)^\frac{1}{4} \]

The function $\theta$ is continuous on the compact set $\partial \Omega$ (because $\Omega$ has $\mathcal C^1$ boundary), and never vanishes (because $x$ is in the interior of $\Omega$), hence is bounded away from $0$. The same goes for $\p$ (notice that $\p(q)=\pi-\theta(q)$), and we can thus find a constant $C_2>0$ such that:
\[ d_x(p,q)\leq C_2 \sqrt{pq} (\alpha(p,q)\beta(p,q))^\frac{1}{4}~.\]

Consider now the exterior angle $\gamma(p,q)$ between the lines $T_p\partial\Omega \cap \mathcal P_{p,q}$ and $T_q\partial\Omega \cap \mathcal P_{p,q}$.\\
Notice that we have $\alpha(p,q)+\beta(p,q)=\gamma(p,q)$. Using the inequality between arithmetic and geometric means, we deduce:
\[ d_x(p,q)\leq C_2 \sqrt{pq} \sqrt{\gamma(p,q)} \]

Finally, there is a constant $C_3$ such that the angle $\gamma(p,q)$ between the lines $p^*\cap \mathcal P_{p,q}$ and $q^* \cap \mathcal P_{p,q}$ is smaller than $C_3 d^*(p^*,q^*)$. This gives the desired inequality.
\end{proof}


%

Let us now conclude the proof of Theorem \ref{th - lower bound}.

\begin{proof}[Proof of Theorem \ref{th - lower bound}]
By Lemma \ref{lemma gromov smaller euclid}, we have
\[d_x(p,q) \leq \frac{C}{\sqrt{2}}\sqrt{d(p,q)^2 + d^*(p^*,q^*)^2}\]
for all $p,q \in \Lambda_\Gamma$. Since $\sqrt{d(p,q)^2 + d^*(p^*,q^*)^2}$ is a Riemannian distance on a neigbourhood of $\Lambda^{\mathrm{sym}}(\G) \subset \proj(\R^n) \times \proj({\R^n}^*)$, we deduce that
\[\DimH(\Lambda_\G, d_x) \leq \DimH(\Lambda^{\mathrm{sym}}(\G))~.\]
Since
\[2\delta_{1,n}(\G) = \DimH(\Lambda_\G, d_x)\]
by Corollary \ref{prop - dimension Gromov = exposant critique}, the theorem follows.
\end{proof}

Let us finally recall the following consequence for every projective Anosov group:
\begin{coro}
Let $\G \subset \SL(n,\R)$ be a projective Anosov group. Then
\[\delta_{1,n}(\G) \leq  \DimH(\Lambda^{\mathrm{sym}}(\G))~.\]
\end{coro}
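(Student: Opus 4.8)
The plan is to deduce this from the convex-cocompact case already treated in Theorem \ref{th - lower bound}, by passing to the symmetric-square representation. Let $\iota:\SL(n,\R)\to\SL(m,\R)$ with $m=\frac{n(n+1)}{2}$ denote the action on $\mathrm{Sym}^2(\R^n)$, as in Theorem \ref{th - projective anosov are convex cocompact}. Since $\G$ is projective Anosov, that theorem tells us $\iota(\G)$ is strongly projectively convex cocompact, so Theorem \ref{th - lower bound} applies to it and gives
\[2\delta_{\alpha_{1,m}}(\iota(\G)) \leq \DimH(\Lambda^{\mathrm{sym}}(\iota(\G)))~.\]
It then remains to identify each side with the corresponding quantity for $\G$.

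For the left-hand side I would compute the Cartan projection of $\iota(g)$. Writing $\mu(g)=\diag(\mu_1(g),\dots,\mu_n(g))$, the singular values of $\iota(g)$ are the products $e^{\mu_i(g)+\mu_j(g)}$ for $i\le j$; in particular $\mu_1(\iota(g))=2\mu_1(g)$ and $\mu_m(\iota(g))=2\mu_n(g)$, so that $\mu_1(\iota(g))-\mu_m(\iota(g))=2\big(\mu_1(g)-\mu_n(g)\big)$. Comparing the two counting functions directly yields $\delta_{\alpha_{1,m}}(\iota(\G))=\tfrac12\,\delta_{1,n}(\G)$, hence $2\delta_{\alpha_{1,m}}(\iota(\G))=\delta_{1,n}(\G)$. (One also checks $\det\iota(g)=(\det g)^{n+1}=1$, so that $\iota$ indeed lands in $\SL(m,\R)$.)

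For the right-hand side, the point is that the limit set of $\iota(\G)$ is a bi-Lipschitz copy of that of $\G$. The Veronese map $\mathrm{Ver}:\proj(\R^n)\to\proj(\mathrm{Sym}^2(\R^n))$, $[v]\mapsto[v\odot v]$, carries the top eigenline $[v_1]$ of $g$ to the top eigenline $[v_1\odot v_1]$ of $\iota(g)$, so $\xi_{\iota}=\mathrm{Ver}\circ\xi$; dually, under the identification $(\mathrm{Sym}^2\R^n)^*\cong\mathrm{Sym}^2((\R^n)^*)$ the repelling hyperplane of $\iota(g)$ is cut out by $\varphi\odot\varphi$ whenever $\xi^*$ is cut out by $\varphi$, so $\xi_{\iota}^*=\mathrm{Ver}^*\circ\xi^*$ for the dual Veronese map $\mathrm{Ver}^*$. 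Hence $\Lambda^{\mathrm{sym}}(\iota(\G))=(\mathrm{Ver}\times\mathrm{Ver}^*)\big(\Lambda^{\mathrm{sym}}(\G)\big)$. Since $\mathrm{Ver}\times\mathrm{Ver}^*$ is a smooth embedding, it is bi-Lipschitz on the compact set $\Lambda^{\mathrm{sym}}(\G)$, so Proposition \ref{p:GeneralComparisonHDim} gives $\DimH(\Lambda^{\mathrm{sym}}(\iota(\G)))=\DimH(\Lambda^{\mathrm{sym}}(\G))$. Chaining the three displays then produces $\delta_{1,n}(\G)=2\delta_{\alpha_{1,m}}(\iota(\G))\le\DimH(\Lambda^{\mathrm{sym}}(\iota(\G)))=\DimH(\Lambda^{\mathrm{sym}}(\G))$, which is the claim.

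The singular-value computation and the bi-Lipschitz statement are routine. The one step I expect to require genuine care is confirming that the boundary maps of $\iota(\G)$ really are the Veronese images of those of $\G$ — not only the line map $\xi_\iota=\mathrm{Ver}\circ\xi$, but also the hyperplane map $\xi_\iota^*=\mathrm{Ver}^*\circ\xi^*$ via the identification of $(\mathrm{Sym}^2\R^n)^*$ — since it is precisely this dual identification that guarantees we recover the full \emph{symmetric} limit set of $\iota(\G)$ rather than only $\Lambda_{\iota(\G)}$.
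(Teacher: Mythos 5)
Your proposal is correct and follows essentially the same route as the paper: pass to the symmetric-square representation $\mathrm{Sym}^2(\R^n)$, invoke Theorem \ref{th - projective anosov are convex cocompact} to get strong projective convex cocompactness, apply Theorem \ref{th - lower bound}, and use the identity $\delta_{1,n}(\G) = 2\delta_{1,n(n+1)/2}$ for the image group. The only difference is that you explicitly verify the two identifications (the singular-value computation and the Veronese-embedding identification of the symmetric limit sets) that the paper leaves implicit, and both verifications are sound.
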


\begin{proof}
Let $I$ be the representation of $\SL(n,\R)$ into $\SL(\mathrm{Sym}^2({\R^n}^*))$. Then $I(\G)$ is strongly projectively convex cocompact. Applying Theorem \ref{th - lower bound}, we obtain
\[\delta_{1,n}(\G) = 2 \delta_{1, n(n+1)/2}(I(\G)) \leq \DimH(\Lambda^{\mathrm{sym}}(\G))~.\]
\end{proof}

\section{Upper bound} \label{sec - upper bound}

In this section we prove the upper inequality for the Hausdorff dimension of the limit set of a general projective Anosov subgroup: 

\begin{theorem} \label{th - upper bound}
Let $\G\subset \SL(n,\R)$ be a projective Anosov subgroup. Then
$$\DimH(\Lambda_\G) \leq \delta_{1,2}(\G)~.$$
\end{theorem}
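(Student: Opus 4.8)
The plan is to prove that $H^s(\Lambda_\G)=0$ for every $s>\delta_{1,2}(\G)$, which immediately yields $\DimH(\Lambda_\G)\le\delta_{1,2}(\G)$. By the integral characterization of the critical exponent recalled earlier, $s>\delta_{1,2}(\G)$ is equivalent to convergence of the Poincaré series $\sum_{\g\in\G}e^{-s(\mu_1(\g)-\mu_2(\g))}$; in particular its tails $T(t):=\sum_{\g:\,\mu_1(\g)-\mu_2(\g)\ge t}e^{-s(\mu_1(\g)-\mu_2(\g))}$ tend to $0$ as $t\to+\infty$. The goal is then to produce, for each large $t$, a cover of $\Lambda_\G$ by projective balls of radius $\lesssim e^{-t}$ whose $s$-content is controlled by $T(t)$.

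The geometric input is a contraction estimate coming from the Cartan (singular value) decomposition. Writing $\g=k_\g\exp(\mu(\g))\,l_\g$ with $k_\g,l_\g\in\SO(n)$, set $U^+(\g)=[k_\g e_1]$ and let $H^-(\g)=[l_\g^{-1}\Span(e_2,\dots,e_n)]$ be the associated repelling hyperplane. A direct computation in the Fubini--Study metric shows that for every $\e>0$ there is a constant $C_\e$ such that $\g$ maps the region $\{[v]:d([v],H^-(\g))\ge\e\}$ into the ball $B\bigl(U^+(\g),\,C_\e\,e^{-(\mu_1(\g)-\mu_2(\g))}\bigr)$, with local Lipschitz constant $\le C_\e\,e^{-(\mu_1(\g)-\mu_2(\g))}$ there. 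Thus an element with large $\mu_1-\mu_2$ crushes everything transverse to $H^-(\g)$ into a tiny ball around $U^+(\g)$, of radius governed precisely by the simple root.

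To build the cover I would exploit the Gromov-hyperbolic structure of $\G$ together with the equivariance $\xi(\g\cdot\eta)=\g\cdot\xi(\eta)$. Given $x=\xi(\eta)\in\Lambda_\G$, choose a geodesic ray $(\g_k)$ from $e$ to $\eta$ in the Cayley graph and let $\g(\eta,t)$ be the first $\g_k$ with $\mu_1(\g_k)-\mu_2(\g_k)\ge t$. Since $\mu$ varies by a bounded amount under right multiplication by a generator (Proposition \ref{prop - Benoist cartan projection continuous}), one has $\mu_1-\mu_2\in[t,t+D]$ on the resulting set $\G_t=\{\g(\eta,t)\}$, which is finite by the Anosov lower bound $\mu_1-\mu_2\ge C|\g|-C'$. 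Writing $x=\g(\eta,t)\cdot\xi\bigl(\g(\eta,t)^{-1}\eta\bigr)$, the direction $\g(\eta,t)^{-1}\eta$ is the endpoint of a geodesic ray issued from $e$ and lies on the side opposite to the backward direction determining $H^-(\g(\eta,t))$; by uniform transversality of the limit maps $\xi,\xi^*$ (Theorem \ref{t:BoundaryMapsAnosov}) together with cocompactness of the action of $\G$ on its Cayley graph, $\xi(\g(\eta,t)^{-1}\eta)$ stays at distance $\ge\e_0$ from $H^-(\g(\eta,t))$ for a uniform $\e_0$. The contraction estimate then places $x$ in $B\bigl(U^+(\g(\eta,t)),\,C_{\e_0}e^{-t}\bigr)$, so these balls cover $\Lambda_\G$. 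Their $s$-content is $\sum_{\g\in\G_t}\bigl(C_{\e_0}e^{-(\mu_1(\g)-\mu_2(\g))}\bigr)^s\le C_{\e_0}^s\,T(t)\to0$, while the radii are $\le C_{\e_0}e^{-t}\to0$; hence $H^s(\Lambda_\G)=0$, and letting $s\downarrow\delta_{1,2}(\G)$ concludes.

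I expect the main obstacle to be the uniform transversality step: showing, with a constant independent of $t$ and of the ray, that the forward endpoint $\g(\eta,t)^{-1}\eta$ is seen away from the repelling hyperplane $H^-(\g(\eta,t))$. This is exactly where the Anosov hypothesis and the quasi-geodesic behaviour of $(\g_k)$ must be converted into a quantitative statement relating the Cartan attracting/repelling data $U^+(\g),H^-(\g)$ to the boundary maps in the flag variety. The remaining combinatorial points (finiteness of the antichains $\G_t$ and that they genuinely cover $\Lambda_\G$) are routine for hyperbolic groups, but they must be organized with respect to the gauge $\mu_1-\mu_2$ rather than the word length, which is the reason for using a first-passage stopping time instead of a fixed word-length sphere.
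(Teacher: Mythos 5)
Your overall strategy is coherent: a first-passage (``shadow'') covering, stopping each geodesic ray when $\mu_1-\mu_2$ first exceeds $t$, covering $\Lambda_\G$ by the contraction images around the Cartan attractors $U^+(\g)$, and bounding the $s$-content by the tail of the Poincar\'e series. The linear-algebra contraction estimate is correct, and so is the counting (your version would even give $H^s(\Lambda_\G)=0$ rather than just $H^s(\Lambda_\G)<+\infty$). However, the step you yourself flag as the ``main obstacle'' is a genuine gap, not a routine verification. You need: there exists $\e_0>0$, independent of $\eta$ and $t$, such that $d\bigl(\xi(\g(\eta,t)^{-1}\eta),\,H^-(\g(\eta,t))\bigr)\ge\e_0$, where $H^-(\g)=l_\g^{-1}\Span(e_2,\dots,e_n)$ is the \emph{singular-value} (Cartan) repelling hyperplane. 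Nothing available in the paper relates Cartan-decomposition frames $U^+(\g),H^-(\g)$ to the boundary maps: Theorem \ref{t:BoundaryMapsAnosov} only concerns eigenvector data of proximal elements and the pairwise transversality of $\xi,\xi^*$, and ``cocompactness of $\G$ on its Cayley graph'' does not by itself convert that into a statement about $H^-(\g)$, which is not of the form $\xi^*(\zeta)$ for any boundary point $\zeta$. The missing statement is true, but it is essentially the Morse/uniform-regularity property of Anosov subgroups (Cartan attractors of elements converging to $\zeta\in\partial_\infty\G$ converge to $\xi(\zeta)$, and likewise the hyperplanes $H^-$ converge to $\xi^*$ of the appropriate backward limit); proving it requires the machinery of Gu\'eritaud--Guichard--Kassel--Wienhard or Kapovich--Leeb--Porti, which you neither quote as a black box nor reprove. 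As written, the proof has a hole exactly at its load-bearing step.

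It is instructive to compare with how the paper sidesteps this issue. Instead of Cartan data along rays, it covers $\Lambda_\G\cap B$ by the images $\g\cdot B$, where $\g$ ranges over the semigroup of elements satisfying $\g(\proj(\R^n)\setminus B')\subset B$ and $\mathrm{diam}(\g(B))<\e$, for two fixed balls $B,B'$ with disjoint closures meeting $\Lambda_\G$. For such $\g$ the attracting/repelling objects are \emph{eigen}-data ($\g_+\in B$, repelling hyperplane uniformly transverse because $\g_-\in B'$), so uniform proximality comes for free from the disjointness of $\overline{B}$ and $\overline{B'}$, and the covering property follows from a soft density argument (density of pairs of fixed points in $\Lambda_\G\times\Lambda_\G$). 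The price is that the $s$-content is bounded by the full Poincar\'e series rather than its tail, yielding only $H^s<+\infty$ for $s>\delta_{1,2}(\G)$ --- which is still enough. To salvage your argument, either import the uniform convergence of Cartan attractors/repellers for Anosov subgroups as an explicit external input, or replace your stopping-time elements by genuinely proximal ones (e.g.\ via Lemma \ref{lem-AMS guichard wienhard}) so that the repelling data become eigen-data and transversality can be arranged by position, as the paper does.
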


Applying this theorem to $\rho(\G)$ where $\rho: \SL(n,\R) \to \SL(\R^n \otimes {\R^n}^*)$ is the tensor product of the standard representation with its dual, we obtain the \textit{a priori} stronger inequality:

\begin{coro} \label{c - upper bound symmetric}
Let $\G\subset \SL(n,\R)$ be a projective Anosov subgroup. Then
$$\DimH(\Lambda^{\mathrm{sym}}_\G) \leq \delta_{1,2}(\G)~.$$
\end{coro}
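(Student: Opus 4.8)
The plan is to deduce the statement from Theorem~\ref{th - upper bound} (the upper bound for the single-sided limit set $\Lambda_\G \subset \proj(\R^n)$) by applying it to the tensor product representation $\rho = \rho_\Theta$ with $\Theta = \{\alpha_{1,2},\alpha_{n-1,n}\}$. Concretely, if $\rho \colon \SL(n,\R) \to \SL(\R^n \otimes {\R^n}^*)$ is the tensor product of the standard representation with its dual, then I must check three things: (i) $\rho(\G)$ is again projective Anosov, so that Theorem~\ref{th - upper bound} applies to it; (ii) the limit set $\Lambda_{\rho(\G)} \subset \proj(\R^n \otimes {\R^n}^*)$ coincides, as a subset of a smooth manifold, with $\Lambda^{\mathrm{sym}}_\G$, so that the two have the same Hausdorff dimension; and (iii) the simple root critical exponent is unchanged, $\delta_{1,2}(\rho(\G)) = \delta_{1,2}(\G)$. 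Granting these, Theorem~\ref{th - upper bound} gives $\DimH(\Lambda^{\mathrm{sym}}_\G) = \DimH(\Lambda_{\rho(\G)}) \le \delta_{1,2}(\rho(\G)) = \delta_{1,2}(\G)$.

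Each verification is short. For (i), $\G$ projective Anosov means it is $\Theta$-Anosov for $\Theta = \{\alpha_{1,2},\alpha_{n-1,n}\}$, and the tensor product of the associated fundamental representations is projective Anosov by the corollary characterizing $\Theta$-Anosov subgroups through $\rho_\Theta$. For (ii), the boundary map of $\rho_\Theta(\G)$ is $\xi_\Theta = \xi \otimes \xi^* = \xi^{\mathrm{sym}}$, landing in the variety of pure tensors, which is canonically the smooth submanifold $\proj(\R^n) \times \proj({\R^n}^*) \subset \proj(\R^n \otimes {\R^n}^*)$; this identification is a smooth embedding, hence bi-Lipschitz onto its image near the compact set $\Lambda^{\mathrm{sym}}_\G$, so $\DimH$ is preserved by Proposition~\ref{p:GeneralComparisonHDim}. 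For (iii), Proposition~\ref{prop - SimpleRootsTensorProduct} gives $(\mu_1-\mu_2)(\rho_\Theta(\gamma)) = \min(\alpha_{1,2}(\mu(\gamma)), \alpha_{n-1,n}(\mu(\gamma)))$, and Lemma~\ref{l:CriticalExponentInf} yields $\delta_{1,2}(\rho_\Theta(\G)) = \max(\delta_{\alpha_{1,2}}(\G), \delta_{\alpha_{n-1,n}}(\G))$; since $\G$ is stable under $\gamma \mapsto \gamma^{-1}$ and $\alpha_{n-1,n}(\mu(\gamma)) = \alpha_{1,2}(\mu(\gamma^{-1}))$ (the opposition involution sends $\alpha_{1,2}$ to $\alpha_{n-1,n}$), re-indexing the orbit counts shows $\delta_{\alpha_{n-1,n}}(\G) = \delta_{\alpha_{1,2}}(\G) = \delta_{1,2}(\G)$, so the maximum equals $\delta_{1,2}(\G)$.

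The only genuinely hard input is Theorem~\ref{th - upper bound}, which I am entitled to assume but whose proof carries the real content. Were I to prove it directly, I would cover $\Lambda_\G$ by \emph{shadows}: for an $\e$-proximal $\gamma$, the attracting line $\xi(\gamma_+)$ together with transversality of the limit maps (Theorem~\ref{t:BoundaryMapsAnosov}) and the continuity estimate of Proposition~\ref{prop - Benoist cartan projection continuous} shows that $\gamma$ maps the part of $\Lambda_\G$ staying at definite distance from the repelling hyperplane $\xi^*(\gamma_-)$ into a projective ball of radius $\asymp e^{-(\mu_1(\gamma)-\mu_2(\gamma))}$. Organizing the group by level sets of $\mu_1 - \mu_2 \approx R$ produces, at each scale, a cover of $\Lambda_\G$ by such balls of cardinality $\Card\{\gamma : \mu_1(\gamma)-\mu_2(\gamma) \le R\} \approx e^{\delta_{1,2}(\G) R}$; summing $(\mathrm{radius})^s \approx e^{(\delta_{1,2}(\G)-s)R}$ over the cover then forces $H^s(\Lambda_\G) = 0$ for $s > \delta_{1,2}(\G)$. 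I expect the main obstacle there to be producing these covers at a controlled scale with \emph{bounded multiplicity}: this requires that $\mu_1 - \mu_2$ behave like an additive cocycle along aligned products, so that level sets genuinely tile the boundary, together with a separation estimate between distinct shadows of the same scale, both of which rest on the hyperbolicity of $\G$ and on the sharp form of the contraction estimate. Within the corollary itself the only subtlety is item (iii), and it is resolved by the opposition-involution symmetry above.
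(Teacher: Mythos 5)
Your proposal is correct and follows essentially the same route as the paper's own proof: apply Theorem~\ref{th - upper bound} to the tensor product $\rho = \rho_\Theta$ of the standard representation with its dual, identify the limit set of $\rho(\G)$ with $\Lambda^{\mathrm{sym}}_\G$, and use Proposition~\ref{prop - SimpleRootsTensorProduct}, Lemma~\ref{l:CriticalExponentInf}, and the symmetry $(\mu_{n-1}-\mu_n)(g) = (\mu_1-\mu_2)(g^{-1})$ to conclude $\delta_{1,2}(\rho(\G)) = \delta_{1,2}(\G)$. Your verifications (i) and (ii), which the paper leaves implicit, are accurate, and the closing discussion of how one would prove Theorem~\ref{th - upper bound} itself is not needed for the corollary.
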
 

\begin{proof}[Proof of Corollary \ref{c - upper bound symmetric} assuming Theorem \ref{th - upper bound}]
Let $\rho: \SL(n,\R) \to \SL(\R^n \otimes {\R^n}^*)$ denote the tensor product of the standard representation with its adjoint. Recall that we have
\[\Lambda_{\rho(\Gamma)}= \Lambda_\Gamma^{\mathrm{sym}}~.\]
(See Proposition \ref{prop - boundary map tensor product} and Example \ref{example - Symmetric boundary map}.)
By Theorem \ref{th - upper bound}, we have
\[\DimH(\Lambda^{\mathrm{sym}}_\G) \leq \delta_{1,2}(\rho(\G))~.\]
By Proposition \ref{prop - Critical exp SW tensor product}, we have
\[\delta_{1,2}(\rho(\G)) = \sup \{\delta_{1,2}(\G), \delta_{n-1,n}(\G)\}~.\] Finally, since $(\mu_{n-1}-\mu_n)(\g) = (\mu_1 - \mu_2)(\g^{-1})$, we have
\[\delta_{1,2}(\G) = \delta_{n-1,n}(\G)~.\]
We conclude that 
\[\DimH(\Lambda_\Gamma^{\mathrm{sym}}) = \DimH(\Lambda_{\rho(\Gamma)}) \geq \delta_{1,2}(\rho(\G)) = \delta_{1,2}(\G)\]
and the corollary follows.
\end{proof}


Let us now turn to the proof of Theorem \ref{th - upper bound}. The main technical tool for the proof is Lemma \ref{lem - quantified distortion of balls by loxodromic element} that quantifies the distortion of balls by proximal elements. The second part of the proof presents a covering of the limit set by images of a fixed ball in order to obtain the upper bound. 


%
%
%
%

\subsection{Distortion of balls by proximal elements}

As in Section \ref{subsec - Critical exponents symmetric space}, we endow $\proj(\R^n)$ with the Riemannian distance $d_\proj$ that lifts to the round metric on $\mathbb{S}^{n-1}$. Recall that an element $g\in\SL(n,\R)$ is \emph{proximal} if it has an attracting fixed point in $\proj(\R^n)$. We denote this attracting point by $L_+(g)$ and the hyperplane complement of its basin of attraction by $H_-(g)$. If $d_\proj(L_+(g), H_-(g)) > r$, then for every $\epsilon>0$ there exists $k \in \N$ such that $g^k$ is $(r,\epsilon)$-proximal (see definition \ref{def - proximality}).

The following lemma states that the contraction of a proximal element $g$ near its attracting fixed point is controlled by $e^{(\mu_2-\mu_1)(g)}$.


\begin{lemme}\label{lem - quantified distortion of balls by loxodromic element}
For every $r>0$, there exists $\epsilon >0$ and a constant $C>0$ such that every $(2r,\epsilon)$-proximal matrix $g\in \SL(n,\R)$ is $Ce^{(\mu_2-\mu_1)(g)}$-Lipschitz on $B(L_+(g),r)$.
\end{lemme}

\begin{proof}
The proof is similar to that of Lemma \ref{lem - ComparisonMuLambdaProximalElement}.

We can find a compact set $M \subset \SL(n,\R)$ (depending only on $r$) that any $(2r,\epsilon)$-proximal element $g$ can be written as $m^{-1}hm$ with $m\in M$ and $h$ proximal satisfying $L_+(h) = [e_1]$ and $H_-(h) = e_1^\perp$. Let $C_1>0$ be such that every $m\in M$ is $C_1$-bilipschitz.

If $\epsilon < \frac{1}{C_1^2}$, then $h$ is contracting at $L_+(h)$, from which we deduce that $h$ acts as a $e^{(\mu_2-\mu_1)(h)}$- contracting linear map in the affine chart $x_1=1$. 

Now, $m(B(L_+(g),r))$ is contained in the domain $T = \{v \mid d_\proj(v, e_1^\perp) \geq \frac{r}{C_1}\}$. There exists a constant $C_2$ such that the Euclidean metric of the affine chart $x_1=1$ and the round metric are $C_2$-bilipschitz on $T$. We deduce that $h$ is $C_2^2e^{(\mu_2-\mu_1)(h)}$-Lipschitz on $T$, hence $g$ is $C_1^2 C_2^2e^{(\mu_2-\mu_1)(h)}$ on $B(L_+(g),r)$. Finally, by Proposition \ref{prop - Benoist cartan projection continuous}, there exists a constant $C_3$ such that 
\[\vert(\mu_2-\mu_1)(g)- (\mu_2-\mu_1)(h)\vert \leq C_3 ~,\] and we conclude that $g$ is $Ce^{(\mu_2-\mu_1)(g)}$-Lipschitz on $B(L_+(g),r)$ for $C=C_1^2 C_2^2e^{C_3}$.
\end{proof}

%
%
%
%
%
%

\subsection{Proof of the upper bound}
In order to bound the Hausdorff dimension of a set from above, it is sufficient to find a cover of this set by sufficiently small balls. We will show in Lemma \ref{lem - covering the limit set} that $\Lambda_\Gamma$ can be covered by translates of a ball by some particular proximal elements.

Let $x,y$ be two distinct points of $\Lambda_\Gamma$. By transversality of the boundary maps, there exists $r>0$ such that, for all $x'\in \xi^{-1}(B(x,r))$ and all $y' \in \xi^{-1}(B(y,r))$, we have
\[d_\proj(\xi(x'), \xi^*(y')) > 6r~.\]

Denote respectively by $U$ and $V$ the preimages of $B(x,r)$ and $B(y,r)$ by $\xi$. Note that $U$ and $V$ are neighbourhoods of $\xi^{-1}(x)$ and $\xi^{-1}(y)$ respectively.

For $\epsilon>0$, define $\G_\epsilon$ to be the set of elements $\g\in \G$ such that 
\begin{itemize}
\item $\g(V^c)\subset U$,
\item $\g$ is $\epsilon$-Lipschitz on $B(x, 5r)$.
\end{itemize}

\begin{prop}
For $\epsilon < \frac{2}{3}$, the set $\Gamma_\epsilon$ is a non-empty semigroup.
\end{prop}

\begin{proof}
Let $\gamma$ and $\gamma'$ be elements in $\G_\epsilon$. Since $U\cap V = \emptyset$, we have $\gamma\gamma'(V^c) \subset U$. 

Note also that $\gamma'_+ \in U$, hence $L_+(\gamma')\in B(x,r)$ and $B(x,5r) \subset B(L_+(\gamma'),6r)$. Since $\gamma'$ is $\epsilon$-Lipschitz on $B(x,5r)$, we deduce that $\gamma' B(x,5r) \subset B(L_+(\gamma'), 6\epsilon r) \subset B(x,5r)$. Since $\gamma$ is $\epsilon$-Lipschitz on $B(x,5r)$, we conclude that $\gamma \gamma'$ is $\epsilon^2$-Lipschitz on $B(x, 5r)$. Hence $\Gamma_\epsilon$ is a semigroup.

Moreover, by Corollary 8.2.G of \cite{Gromov}, the set of pairs $(\g_+,\g_-)$ of hyperbolic elements $\g\in\G$ is dense in $\partial_\infty \G\times\partial_\infty\G$. Hence there exists $\gamma \in \Gamma$ such that $\gamma_+ \in U$ and $\gamma_-\in V$. By definition of $U$ and $V$, we have $d_\proj(L_+(\gamma),H_-(\gamma)) > 6r$. Hence, after replacing $\gamma$ by a large enough power, we can assume that $\gamma$ is $(6r,\epsilon)$-proximal for an arbitrarily small $\epsilon$. Since $B(x,5r)\subset B(L_+(\gamma), 6r)$, we conclude that $\gamma \in \Gamma_\epsilon$. Hence $\Gamma_\epsilon$ is non-empty.
\end{proof}

\begin{lemme} \label{lem - covering the limit set}
For any $0<\epsilon< \frac{2}{3}$ we have
\[\Lambda_\Gamma\cap B(x,r) \subset \cup_{\g \in \G_\epsilon} \g \cdot B(x,r)~.\]
\end{lemme}
 
\begin{proof}
Set $O^\epsilon =  \cup_{\g \in \G_\epsilon} \g \cdot B(x,r)$. 
By definition, it is a $\G_\epsilon$-invariant open subset of $B(x,r)$. Moreover, $\Lambda_\Gamma \cap O^\epsilon$ is non-empty since it contains $\G_\epsilon \cdot x$. The set $C^\epsilon = \Lambda_\G \cap (B(x,r)\setminus O^\epsilon)$ is closed in $B(x,r)$ and $\G_\epsilon$-invariant. We want to prove that $C^\epsilon$ is empty. Assume by contradiction that it is not the case, and pick $c \in  \xi^{-1}(C^\epsilon)$. 

Let $\g \in \G$ be a proximal element such that $\g_- \in V$ and $\g_+ \in U$. Then for $k$ large enough, $\g^k$ belongs to $\G_\epsilon$. Since $c\neq \g_-$, $\g^k c$ converges to $\g_+$ as $k$ goes to $+\infty$. Since $C^\epsilon$ is $\G_\epsilon$-invariant and closed, we obtain that $L_+(\g)\in C^\epsilon$. By density of the pairs $(\g_+,\g_-)$ in $\partial_\infty \G\times\partial_\infty\G$, we conclude that $C^\epsilon = \Lambda_\Gamma\cap B(x,r)$, contradicting the fact that $\Lambda_\Gamma \cap O^\epsilon$ is non-empty.
\end{proof}

We now have all the tools to prove Theorem \ref{th - upper bound}.

\begin{proof}[Proof of Theorem \ref{th - upper bound}]
By compactness of $\Lambda_\Gamma$ it is sufficient to prove that, for all $x\in \Lambda_\Gamma$, there exists $r>0$ such that the Hausdorff dimension of $\Lambda_\Gamma \cap B(x,r)$ is less than $ \delta_{1,2}(\G)$.

Given such and $x$, let us fix $y, r, U$, and $V$ as before. Let $C>0$ be given by  Lemma \ref{lem - quantified distortion of balls by loxodromic element}.

By Lemma \ref{lem - covering the limit set}, we have $\Lambda_\Gamma\cap B(x,r) \subset  \cup_{\g \in \G_\epsilon} \g \cdot B(x,r)$. By definition of $\G_\epsilon$, this gives a covering of $\Lambda_\Gamma\cap B(x,r)$ by balls of radius less than $\epsilon r$.

Note that every $\gamma \in \Gamma_\epsilon$ is $\epsilon$-Lipschitz on $B(L_+(\gamma), 4r) \subset B(x,5r)$, hence it is $(4r,\epsilon)$-proximal. By Lemma \ref{lem - quantified distortion of balls by loxodromic element}, it is thus $Ce^{(\mu_2-\mu_1)(\gamma)}$-Lipschitz on $B(x,r) \subset B(L_+(\g),2r)$, for some constant $C$.

Let $s>0$. By definition of the $s$-dimensional Hausdorff measure,  we have: 

\begin{eqnarray*}
H^{s}_{r\epsilon} (\Lambda_\Gamma \cap B(x,r)) &\leq & \sum_{\g\in \G_\epsilon} \mathrm{diam}(\g \cdot B(x,r)) ^s\\
& \leq & C^s  \sum_{\g\in \G_\epsilon}  e^{s(\mu_2- \mu_1)(\g)}\\
&\leq & C^s \sum_{\g\in \G}  e^{-s(\mu_1- \mu_2) (\g)}~. \\			
\end{eqnarray*}
Since $\epsilon$ can be taken arbitrarily small, we obtain : 
$$H^{s} (\Lambda_\G \cap B)\leq C^s \sum_{\g\in \G}  e^{-s(\mu_1- \mu_2) (\g)}~.$$
Therefore, for all $s>\delta_{1,2}$, 
$H^{s} (\Lambda_\G \cap B(x,r))<+\infty$ which in turn implies that $\DimH(\Lambda_\G\cap B(x,r)) \leq \delta_{1,2}$.

\end{proof}

\end{document}